\newcommand{\Bmu}{\mbox{$\raisebox{-0.59ex}
  {$l$}\hspace{-0.18em}\mu\hspace{-0.88em}\raisebox{-0.98ex}{\scalebox{2}
  {$\color{white}.$}}\hspace{-0.416em}\raisebox{+0.88ex}
  {$\color{white}.$}\hspace{0.46em}$}{}}
\theoremstyle{plain}
\newtheorem{theorem}{Theorem}
\newtheorem{mytheorem}{Theorem}[subsection]
\newtheorem{corollary}[mytheorem]{Corollary}
\newtheorem{lemma}[mytheorem]{Lemma}
\newtheorem{proposition}[mytheorem]{Proposition}
\newtheorem{remark}[mytheorem]{Remark}
\font\cyr wncyr10 at 11pt \def\Sha{\hbox{\cyr X}}
\def\tF{\textsc{F}}
\def\tV{\textsc{V}}
\def\CC{{\mathbb C}}
\def\FF{{\mathbb F}}
\def\QQ{{\mathbb Q}}
\def\ZZ{{\mathbb Z}}
\def\C{{\mathcal C}}
\def\E{{\mathcal E}}
\def\F{{\mathcal F}}
\def\cG{{\mathcal G}}
\def\cH{{\mathcal H}}
\def\K{{\mathcal K}}
\def\cL{{\mathcal L}}
\def\cO{{\mathcal O}}
\def\cP{{\mathcal P}}
\def\V{{\mathcal V}}
\newcommand{\fC}{\mathfrak{C}}
\newcommand{\fM}{\mathfrak{M}}
\newcommand{\fV}{\mathfrak{V}}
\newcommand{\fW}{\mathfrak{W}}
\newcommand{\fp}{\mathfrak{p}}
\newcommand{\Kpinf}{{K}_{\infty}^{(p)}}
\newcommand{\La}{\operatorname{\Lambda}}
\newcommand{\bo}{\mathbf}
\newcommand{\lr}{{\longrightarrow\;}}
\def\+{{\dagger}}
\DeclareMathOperator{\Ker}{Ker}
\DeclareMathOperator{\coh}{H}
\DeclareMathOperator{\Frob}{Frob}
\DeclareMathOperator{\Fr}{Fr}
\DeclareMathOperator{\Gal}{Gal}
\DeclareMathOperator{\res}{res}
\DeclareMathOperator{\Sel}{Sel}
\DeclareMathOperator{\Hom}{Hom}
\DeclareMathOperator{\Spec}{Spec}
\definecolor{purple}{rgb}{0.7,0,1}
\begin{document}
\title[ ] {The Iwasawa Main conjecture of constant ordinary\\ abelian varieties over function fields}

\author[Lai] {King Fai Lai}
\address{School of Mathematical Sciences\\
Capital Normal University\\
Beijing 100048, China}
\email{kinglaihonkon@gmail.com}

\author[Longhi] {Ignazio Longhi}
\address{Department of Mathematical Sciences\\
Xi'an Jiaotong-Liverpool University\\
No.111 Ren'ai Road, Suzhou Dushu Lake Higher Education Town, Suzhou Industrial Park, Jiangsu, China}
\email{Ignazio.Longhi@xjtlu.edu.cn}

\author[Tan]{Ki-Seng Tan}
\address{Department of Mathematics\\
National Taiwan University\\
Taipei 10764, Taiwan}
\email{tan@math.ntu.edu.tw}

\author[Trihan]{Fabien Trihan}
\address{Department of Information and Communication Sciences\\
Faculty of Science and Technology\\
Sophia University, Tokyo, Japan}
\email{fabientrihan@gmail.com}

\subjclass[2000]{11S40 (primary), 11R23, 11R34, 11R42, 11R58, 11G05, 11G10 (secondary)}

\keywords{Abelian variety, Selmer group, Frobenius, Iwasawa theory, Stickelberger element}

\begin{abstract} We study a geometric analogue of the Iwasawa Main Conjecture for constant ordinary abelian varieties over $\ZZ_p^d$-extensions of function fields  ramifying at a finite set of places.
\end{abstract}

\maketitle


\section{Introduction}  \label{sec:intro}

We prove in this paper the Iwasawa Main Conjecture for constant ordinary abelian varieties over
function fields.
Fix a prime number $p$ ($p=2$ is allowed). Let $\FF$ be a finite field of characteristic $p$ and let
$A$ be an ordinary abelian variety defined over $\FF$.
Let $K$ be a global field having $\FF$ as its constant field.
Let $L$ be a $\ZZ_p^d$-extension of $K$ ($d\geq 1$), unramified outside a finite set of places, with $\Gamma:=\Gal(L/K)$,
and write $\La$ for the Iwasawa algebra $\ZZ_p[[\Gamma]]$.
Let $X_p(A/L)$ denote the Pontryagin dual of the Selmer group $\Sel_{p^\infty}(A/L)$.

Note that $X_p(A/L)$ is finitely generated over $\La$, hence we can define the characteristic ideal $\chi\big(X_p(A/L)\big)$,
which is a principal ideal of $\Lambda$.
For $\omega$ a continuous character of $\Gamma$, let $L(A,\omega,s)$ denote the twisted Hasse-Weil $L$-function of $A$.
The following theorem summarizes the main results of this paper.

\begin{theorem} \label{t:summary} There exists a ``$p$-adic $L$-function'' $\mathcal{L}_{A,L}\in \La$ such that for any continuous character
$\omega\colon\Gamma\rightarrow\CC^\times$,
\begin{equation}\label{e:imc2interpolate}
\omega(\mathcal{L}_{A,L})=*_{A,\,L,\, \omega}\cdot L(A,\omega,1)
\end{equation}
for an explicit  factor $*_{A,\,L,\,\omega}$. Furthermore, we have the equality of ideals
\begin{equation}\label{e:imc3characteristic}
(\mathcal{L}_{A,L})= \chi\big(X_p(A/L)\big).
\end{equation}

\end{theorem}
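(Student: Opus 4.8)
The proof proceeds in two largely independent stages: first constructing the $p$-adic $L$-function $\mathcal{L}_{A,L}\in\La$ with the interpolation property \eqref{e:imc2interpolate}, and then identifying it with the characteristic ideal via \eqref{e:imc3characteristic}. For the construction, I would exploit the hypothesis that $A$ is \emph{constant} and \emph{ordinary}: the Hasse--Weil $L$-function $L(A,\omega,s)$ factors through the action of Frobenius on the (covariant) Dieudonn\'e module / $p$-adic Tate module, and the ordinary hypothesis splits this into a unit-root part and a part divisible by $p$. Concretely, each twist $L(A,\omega,1)$ should be expressible as a value of a Stickelberger-type element: one assembles, over the finite layers $K_n$ of $L/K$, the characteristic polynomials of Frobenius acting on $H^1$ of $A$ (or equivalently on $T_pA$ together with its \'etale quotient), and the ordinary condition guarantees these assemble $p$-adically into a well-defined element $\mathcal{L}_{A,L}$ of $\La=\ZZ_p[[\Gamma]]$ rather than just of the total fraction ring. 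The explicit factor $*_{A,L,\omega}$ records the Euler factors at the ramified places together with the period/regulator-type normalization forced by the comparison between the arithmetic and geometric Frobenius.

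**The cohomological heart.** For \eqref{e:imc3characteristic} the natural strategy is to compute $X_p(A/L)$ by a syntomic / flat-cohomological descent and compare directly with the two-variable object built above. Because $A$ is constant, the Selmer group over $L$ is controlled by the cohomology of a fixed $p$-divisible group $A[p^\infty]$ over the curve, and one has a Hochschild--Serre spectral sequence relating $\Sel_{p^\infty}(A/L)^\vee$ to a complex $R\Gamma$ on the base curve $C_{/\FF}$ with coefficients in a $\La$-sheaf. The key input is a \emph{perfectness} statement: this complex is represented by a bounded complex of finitely generated projective (or at least perfect) $\La$-modules, so that its determinant is defined and equals a principal ideal of $\La$. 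One then shows that this determinant, on the one hand, equals $\chi(X_p(A/L))$ up to the contributions of $H^0$ and the error terms from the ramified places and the constant-field extension (these are pseudo-null or their characteristic ideals cancel), and on the other hand equals $(\mathcal{L}_{A,L})$ because the Frobenius characteristic polynomial computing the $L$-function is exactly the determinant of $1-\Frob$ on that same complex. This is the function-field incarnation of the ``main conjecture via $\epsilon$-isomorphisms / determinant of cohomology'' philosophy, and in the constant ordinary case it can be made completely explicit.

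**Where the difficulty lies.** The main obstacle is the passage from the crude equality of determinants to the precise equality of \emph{ideals} \eqref{e:imc3characteristic}: one must control all the auxiliary terms — the $\La$-torsion coming from $H^0$ and $H^2$, the local Euler factors at the places where $L/K$ ramifies, and especially the discrepancy between the \'etale and flat (syntomic) sites that is invisible when $p$ is invertible but essential here since $p=\mathrm{char}\,\FF$. Controlling the $p$-part requires the ordinary hypothesis in an essential way (to know the unit-root subspace is a direct summand and behaves well in $\La$-families), and one needs a vanishing or pseudo-nullity result for the relevant higher cohomology to conclude that no extra characteristic-ideal factor creeps in. A secondary technical point is checking that $\mathcal{L}_{A,L}$ genuinely lies in $\La$ and not merely in its total ring of fractions; this is where the interpolation formula \eqref{e:imc2interpolate}, proved first, is leveraged — an element of $\mathrm{Frac}(\La)$ whose values at a Zariski-dense set of characters are $p$-adically bounded, together with the determinant interpretation, forces integrality. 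Once these points are in place, \eqref{e:imc2interpolate} and \eqref{e:imc3characteristic} follow by comparing the two descriptions of the same determinant of cohomology, specialized at characters for the former and taken as an ideal for the latter.
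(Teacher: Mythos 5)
Your first stage is broadly in the spirit of the paper, but your second stage is not the paper's argument and, as written, has a genuine gap. For the construction, the paper defines $\mathcal{L}_{A,L}$ completely explicitly as $\prod_{i=1}^g\Theta_{L,S}(\alpha_i^{-1})^\sharp\,\Theta_{L,S}(\alpha_i^{-1})$, where $\Theta_{L,S}(u)$ is the Stickelberger power series and the $\alpha_i$ are the unit eigenvalues of the twist matrix; integrality is \emph{not} obtained by your ``bounded values at a Zariski-dense set of characters force integrality'' argument (which is not valid for $d>1$ without further input), but by showing $\Theta_{L,S}(u)$ lies in the Tate algebra $\ZZ_p[[\Gamma]]\langle u\rangle$ --- a consequence of Weil's rationality theorem applied layer by layer --- so that evaluation at the units $\alpha_i^{-1}$ lands in $\La$. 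The interpolation formula then follows from $\omega(\Theta_{L,S}(q^{-s_0}))=L_S^*(\omega,s_0)$ together with the functional equation of $L(\omega,s)$, which is where the Gauss sums and the factor $q^{g/2}\prod\alpha_i^{-1}$ in $*_{A,L,\omega}$ come from.

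For \eqref{e:imc3characteristic} the paper does not use a determinant-of-cohomology or syntomic argument at all. Instead it exploits constancy and ordinarity to split $\ZZ_p\E_A=\F\times\V$, hence $X_p(A/L)=\F X_p\oplus\V X_p$; it shows that Frobenius-part Selmer classes are everywhere-unramified homomorphisms, so that over $\tilde L=L\FF(A[p^\infty])$ one has $\F X_p(A/\tilde L)\simeq\fW_{\tilde L}\otimes_{\ZZ_p}A[p^\infty]^\vee$, a twist of the $p$-part of the divisor class group; Crew's main conjecture for class groups then computes $\chi(\F X_p)$ as a twisted Stickelberger element, and the Verschiebung part is recovered from the algebraic functional equation $\chi(\V_AX_p(A/L))=\chi(\F_{A^t}X_p(A^t/L))^\sharp$ of \cite{LLTT1}. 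This only works when $L$ contains the arithmetic $\ZZ_p$-extension; the general case is reached by Tan's specialization theorem for $\ZZ_p^d$-extensions. Your proposal leaves precisely the hard points unproved: perfectness of the relevant complex over the $d$-variable Iwasawa algebra, the identification of its determinant with the Stickelberger product, and the pseudo-nullity of the error terms are all asserted as ``where the difficulty lies'' rather than resolved, and in the multivariable setting (where pseudo-null no longer means finite, and the syntomic machinery of \cite{LLTT2} is only available over the arithmetic $\ZZ_p$-extension) these are not routine. You also never address how to descend from an extension containing $\Kpinf$ to a general $L$, which is an essential step and requires the specialization theorem of \cite{tan10b}.
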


The interpolation formula \eqref{e:imc2interpolate} as well as the explicit expression of $*_{A,\,L,\, \omega}$ are
the content of Theorem \ref{t:interpo}, while \eqref{e:imc3characteristic} is proven in  Theorem \ref{t:imc3constant}.
The $p$-adic $L$-function $\mathcal{L}_{A,L}\in \La$ is defined to be a product of twists of the Stickelberger element associated to $L/K$
(see \eqref{e:plfunction}). By this, the interpolation formula \eqref{e:imc2interpolate} can be straightforwardly deduced.

To prove \eqref{e:imc3characteristic}, in \S\ref{s:frobact} we decompose $X_p(A/L)$ into the Frobenius and the Verschiebung parts.
It turns out that when $L$ contains the arithmetic $\ZZ_p$-extension the Frobenius part is a direct sum of twisted eigenspaces of
class group (see Proposition \ref{p:cs}), and hence its characteristic ideal can be determined by applying the Iwasawa Main Theorem of class groups \cite{crw87}. To deal with the Verschiebung part, we make use of the algebraic functional equation proved in \cite{LLTT1}.
These together lead to the proof of the theorem under the condition that $L$ contains the arithmetic $\ZZ_p$-extension.
If the condition is not satisfied, we first adjoin $L$ with the arithmetic $\ZZ_p$-extension to form a $\ZZ_p^{d+1}$-extension $L_{ar}/K$
so that the theorem holds over $L_{ar}$, then we apply the specialization formula of \cite{tan10b} to complete the proof.

Let us mention also that the Iwasawa Main Conjecture for semistable abelian varieties over the arithmetic $\ZZ_p$-extension was proven in \cite{LLTT2}.
In \S\ref{su:2dreams}, we check that our result is compatible with the latter one.



\begin{subsubsection}*{Acknowledgements} The fourth author has been supported by EPSRC. He would like also to express his gratitude to Takeshi Saito for his hospitality at the University of Tokyo where part of this work has been written. Authors 2, 3 and 4 thank Centre de Recerca Matem\`atica for hospitality while working on part of this paper.
Authors 1, 2 and 3 have been partially supported by the National Science Council of Taiwan, grants NSC98-2115-M-110-008-MY2, NSC100-2811-M-002-079 and NSC99-2115-M-002-002-MY3 respectively. Finally, it is our pleasure to thank NCTS/TPE for supporting a number of meetings of the authors in National Taiwan University. \end{subsubsection}

\begin{section}{Settings and preliminary results on Iwasawa modules} \label{s:setting}

In this section we set notation and recall some facts about Iwasawa algebras and modules.

\begin{subsection}{Basic notation}\label{su:bn}
Let $K^a$ be a fixed algebraic closure of $K$. For each positive integer $m$, let $K^{(1/p^m)}\subset K^a$ denote the unique purely inseparable intermediate extension of degree $p^m$ over $K$, and let $K^{(1/p^{\infty})}:=\bigcup_{m=1}^{\infty} K^{(1/p^m)}$. Let ${\bar K}^{(1/p^m)}\subset K^a$ denote the separable closure of $K^{(1/p^m)}$.
Also, let $\Frob_{p^m}\colon x\mapsto x^{p^m}$ denote the Frobenius substitution: it induces a morphism $Frob_{p^m}\colon K^{(1/p^m)}\longrightarrow K$. On Galois groups, $Frob_{p^m}$ gives rise to an identification
$$\Gal({\bar K}/K)=\Gal({\bar K}^{(1/p^m)}/K^{(1/p^m)}).$$

Let $q$ denote the cardinality of $\FF$. 
Let $\Kpinf:=K\FF_{q^{p^\infty}}$ denote the arithmetic $\ZZ_p$-extension, which is the unique everywhere unramified $\ZZ_p$-extension of $K$.

Let $S$ denote the ramification locus of $L/K$, which is assumed to be a finite set.
As usual, for $v$ a place of $K$, we will write $K_v$ for the completion of $K$ at $v$.

Put
$\Gamma_n:=\Gamma/\Gamma^{p^n}\simeq (\ZZ_p/p^n\ZZ_p)^d$ so that $\La=\varprojlim_n \ZZ_p[\Gamma_n]$. Let $K_n$ denote the $n$th layer of $L/K$.
Hence $\Gamma_n=\Gal(K_n/K)$ and $\Gal(L/K_n)=\Gamma^{p^n}=:\Gamma^{(n)}$.

Let $\cO$ denote the ring of integers of some finite extension of $\QQ_p$.

\begin{subsubsection}{The Selmer groups}
We view $A$ as a sheaf on the flat topology of $K$ and let $A_{p^n}$ denote the kernel of the multiplication by $p^n$ on $A$.
For any finite extension $F/K$, the $p^n$-Selmer group $\Sel_{p^n}(A/K)$ is defined to be the kernel of the composition
\begin{equation*}\label{e:nselmer} \begin{CD} \coh^1_{\mathrm{fl}}(F,A_{p^n}) @>{i^*}>> \coh^1_{\mathrm{fl}}(F,A)@>{loc_F}>> \bigoplus_v \coh^1_{\mathrm{fl}}(F_v,A)\,, \end{CD}\end{equation*}
where $\coh_{\mathrm{fl}}^\bullet$ denotes the flat cohomology and $loc_F$ is the localization map to the direct sum of local cohomology groups over all places of $F$. Taking the direct limit as $n\rightarrow\infty$, we set
\begin{equation*}\label{e:selmer} \Sel_{p^\infty}(A/F):=\Ker\big(\coh^1_{\mathrm{fl}}(F,A_{p^\infty})\lr \bigoplus_{\text{all}\ v}\coh^1_{\mathrm{fl}}(F_v,A)\big).\end{equation*}
Define, by taking direct limit over all finite intermediate extensions,
\begin{equation*}\label{e:selmer} \Sel_{p^\infty}(A/L):=\varinjlim_{F} \Sel_{p^\infty}(A/F). \end{equation*}

\end{subsubsection}
\end{subsection}

\begin{subsection}{Iwasawa algebras}\label{su:iw}
Even if our main interest lies in $\Lambda$,
we are going to need Iwasawa algebras $\La_{\cO}(\Gamma'):=\cO[[\Gamma']]$ for topologically finitely generated abelian $p$-adic Lie groups $\Gamma'$. It has the usual topology as inverse limit of $\cO[\Gamma'/\Delta]$
where $\Delta$ runs through all finite index subgroups and as a topological space $\cO[\Gamma'/\Delta]$ is just a finite product of copies of $\cO$.
In the following, by $\La$- or $\La_\cO(\Gamma')$-module we will always mean a topological one, with continuous action of the scalar ring.

\subsubsection{} \label{ss:iwH1} Any continuous group homomorphism $\Gamma'\rightarrow\La_\cO(\Gamma')^{\times}$ extends by linearity to a
continuous ring homomorphism $\La_\cO(\Gamma')\rightarrow\La_\cO(\Gamma')$. We shall make use of the following:
\begin{enumerate}
\item[(H1)] The inversion $\Gamma'\rightarrow \Gamma'$, $\gamma\mapsto \gamma^{-1}$, gives rise to the isomorphism
$$\cdot^\sharp\colon\La_\cO(\Gamma')\,\lr \La_\cO(\Gamma')\,,$$
sending an element $\lambda$ to $\lambda^\sharp$. In particular, if $\langle\;,\;\rangle$ is a $\Gamma$-equivariant pairing between $\La$-modules, in the sense that $\langle \gamma a,\gamma b \rangle=\langle a, b\rangle$, for all $\gamma\in\Gamma$, then
\begin{equation} \label{e:twistpairing} \langle \lambda\cdot a,b\rangle=\langle a,\lambda^\sharp\cdot b\rangle,\;\;\;\text{for every}\; \lambda\in\La.
\end{equation}

\item[(H2)] Every continuous character $\phi\colon{\Gamma'}\rightarrow \cO^{\times}$ induces a homomorphism $\phi^*\colon\Gamma'\rightarrow\La_\cO(\Gamma')^{\times}$, $\phi^*(\gamma):= \phi(\gamma)^{-1}\cdot\gamma$, $\gamma\in\Gamma'$,
    and this extends to the ring homomorphism
$$\phi^*\colon\La_\cO(\Gamma')\,\longrightarrow\La_\cO(\Gamma').$$
Since on $\Gamma'$ the composition $\phi^*\circ (1/\phi)^*$ is the identity map, $\phi^*$ is an isomorphism on $\La_\cO(\Gamma')$.
\end{enumerate}

\end{subsection}


\begin{subsection}{Iwasawa modules}\label{su:ba} Assume that $\Gamma'$ is isomorphic to $\ZZ_p^n$ for some $n$.
The choice of a $\ZZ_p$-basis
$\{ \gamma_i\}$
for $\Gamma'=\bigoplus_{i=1}^n\gamma_i^{\ZZ_p}\simeq\ZZ_p^n$ yields an isomorphism $\La_{\cO}(\Gamma')\simeq\cO[[T_1,...,T_n]]$ where $T_i:=\gamma_i-1$. It follows that the Iwasawa algebra $\La_{\cO}(\Gamma')$ is indeed a unique factorization domain.
By definition a $\La_{\cO}(\Gamma')$-module $M$ is pseudo-null if and only if no height one prime ideal contains its annihilator (i.e., if for any height one prime $\fp$ the localization $M_\fp=0$). The proof of the following lemma is omitted. A comprehensive reference is \cite[\S4]{bou65}.

\begin{lemma}\label{l:psn}
A finitely generated $\La_{\cO}(\Gamma')$-module $M$ is pseudo-null if and only if there exist relatively prime $f_1,...,f_k\in\La_{\cO}$, $k\geq2$, so that $f_iM=0$ for every $i$.
\end{lemma}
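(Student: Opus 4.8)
The plan is to leverage the two facts recalled just above the statement: first, that $\La_{\cO}(\Gamma')\simeq\cO[[T_1,\dots,T_n]]$ is a unique factorization domain, so its height one primes are precisely the principal ideals $(\pi)$ generated by irreducible elements $\pi$; and second, the standard commutative algebra fact that for a finitely generated module $M$ over a Noetherian ring and a prime $\fp$ one has $M_\fp=0$ if and only if $\operatorname{Ann}(M)\not\subseteq\fp$. Granting these, both implications are short, which is presumably why the authors omit the proof and refer to \cite[\S4]{bou65}.

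For the ``if'' direction I would argue directly. Given relatively prime $f_1,\dots,f_k$ with $k\ge 2$ and $f_iM=0$ for all $i$, the ideal $(f_1,\dots,f_k)$ is contained in $\operatorname{Ann}(M)$. If some height one prime $\fp=(\pi)$ contained $\operatorname{Ann}(M)$, then $\pi$ would divide every $f_i$, contradicting the hypothesis that the $f_i$ have no common irreducible factor. Hence no height one prime contains $\operatorname{Ann}(M)$, equivalently $M_\fp=0$ for every height one $\fp$, so $M$ is pseudo-null. Note this direction uses neither finite generation nor the localization criterion; it is purely formal.

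For the ``only if'' direction, assume $M$ is finitely generated and pseudo-null. The degenerate case $M=0$ is handled by taking $f_1=f_2=1$, so suppose $M\ne 0$ and set $I:=\operatorname{Ann}(M)$, a proper ideal. By pseudo-nullity together with the localization criterion, $I$ lies in no height one prime; since $\La_{\cO}(\Gamma')$ has positive Krull dimension it possesses at least one height one prime, and every prime contains $0$, so $I\ne 0$. Choose any nonzero $f_1\in I$; being a nonzero element of a proper ideal it is a non-unit, hence has finitely many irreducible factors $\pi_1,\dots,\pi_r$. Each $(\pi_j)$ is a height one prime, so $I\not\subseteq(\pi_j)$ for every $j$; by prime avoidance applied to these finitely many primes, $I\not\subseteq\bigcup_{j=1}^r(\pi_j)$, so we may pick $f_2\in I$ divisible by none of the $\pi_j$. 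Then $f_1,f_2\in\operatorname{Ann}(M)$ share no irreducible factor, hence are relatively prime, and we conclude with $k=2$.

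I do not expect a real obstacle here; the only point needing a moment's care is in the ``only if'' direction, namely observing that a single nonzero annihilator has only finitely many prime divisors, so that prime avoidance is applicable. Everything else is bookkeeping with unique factorization and the vanishing criterion for localizations of finitely generated modules.
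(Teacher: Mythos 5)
Your proof is correct. Note that the paper itself gives no argument here --- it explicitly omits the proof and refers to \cite[\S4]{bou65} --- and what you have written is exactly the standard argument one would extract from that reference: the ``if'' direction is immediate from unique factorization once one knows that the height one primes of $\cO[[T_1,\dots,T_n]]$ are the $(\pi)$ with $\pi$ irreducible, and the ``only if'' direction correctly handles the two points that need care (that $\operatorname{Ann}(M)\neq 0$ because a height one prime exists and would otherwise contain it, and that prime avoidance applies because a nonzero $f_1$ has only finitely many irreducible factors). Nothing further is needed.
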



A pseudo-isomorphism is a homomorphism with pseudo-null kernel and cokernel. We will write $M\sim N$ to mean that there exists a pseudo-isomorphism from $M$ to $N$.

\subsubsection{}\label{sss:krul} Suppose $\Gamma'\simeq\ZZ_p^n$ and $M$ is a finitely generated torsion $\La_{\cO}(\Gamma')$-module. By the general theory of modules over a Krull domain, there is a pseudo-isomorphism
\begin{equation*} \label{e:structureM} \Phi\colon\bigoplus_{i=1}^m \La_{\cO}(\Gamma')/\xi_i^{r_i}\La_{\cO}(\Gamma')\longrightarrow M,\end{equation*}
where each $\xi_i$ is irreducible.
Since no element in $\La_{\cO}(\Gamma')/\xi_i^{r_i}\La_{\cO}(\Gamma')$ is annihilated by two relative prime elements of $\La_{\cO}(\Gamma')$,
by Lemma \ref{l:psn}, $\Phi$ is injective. Define
\begin{equation}\label{e:[m]}
[M]:=\bigoplus_{i=1}^m \La_{\cO}(\Gamma')/\xi_i^{r_i}\La_{\cO}(\Gamma'),
\end{equation}
which is independent of $\Phi$. The characteristic ideal of $M$ is defined to be
$$\chi_{\cO,\Gamma'}(M):=(\prod_{i=1}^m (\xi_i^{r_i}))\subset\La_{\cO}.$$
If $M$ is non-torsion define $\chi_{\cO,\Gamma'}(M)=0$. If $\cO=\ZZ_p$ and $\Gamma'=\Gamma$, we write $\chi:=\chi_{\cO,\Gamma'}$.

\subsection{Twists} \label{su:twistmodule}
Let $M$ be a $\La_{\cO}(\Gamma')$-module. Any endomorphism $\alpha\colon\La_{\cO}(\Gamma')\to\La_{\cO}(\Gamma')$ defines a twisted $\La_{\cO}(\Gamma')$-module
$\La_{\cO}(\Gamma')\,{}_\alpha\!\otimes_{\La_{\cO}(\Gamma')}M\,,$ where the action on the copy of $\La_{\cO}(\Gamma')$ on the left is via $\alpha$ (i.e., we have $(\alpha(\lambda)\mu)\otimes m=\mu\otimes\lambda m$ for $\lambda$, $\mu\in\La_{\cO}(\Gamma')$ and $m\in M$) and the module structure is given by
\begin{equation} \label{e:twistaction} \lambda\cdot(\mu\otimes m) := (\lambda\mu)\otimes m \end{equation}
(where $\lambda\mu$ is the product in $\La_{\cO}(\Gamma')$ ).
If moreover $\alpha$ is an automorphism, $\La_{\cO}(\Gamma')\,{}_\alpha\!\otimes_{\La_{\cO}(\Gamma')}M$ can be identified with $M$ with the $\La_\cO(\Gamma')$-action twisted by $\alpha^{-1}$, since in this case \eqref{e:twistaction} becomes
\begin{equation} \label{e:twistaction2} \lambda\cdot(1\otimes m)=1\otimes\alpha^{-1}(\lambda)m \,.\end{equation}

\begin{lemma}\label{l:phi[]chi} Let $\alpha$ be an automorphism of $\La_\cO(\Gamma')\simeq\cO[[T_1,...,T_n]]$. Suppose $M$ is a finitely generated torsion $\La_{\cO}(\Gamma')$-module with
$$[M]=\bigoplus_{i=1}^m \La_{\cO}(\Gamma')/\xi_i^{r_i}\La_{\cO}(\Gamma')\,.$$
Then
$$[\La_{\cO}(\Gamma')\,{}_\alpha\!\otimes_{\La_{\cO}(\Gamma')}M]=\La_{\cO}(\Gamma')\,{}_\alpha\!\otimes_{\La_{\cO}(\Gamma')}[M]=\bigoplus_{i=1}^m \La_{\cO}(\Gamma')/\alpha(\xi_i)^{r_i}\La_{\cO}(\Gamma'),$$
and hence
$$\chi_{\cO,\Gamma'}\big(\La_{\cO}(\Gamma')\,{}_\alpha\!\otimes_{\La_{\cO}(\Gamma')}M\big)=\alpha(\chi_{\cO,\Gamma'}(M)).$$
\end{lemma}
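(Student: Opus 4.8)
The plan is to reduce everything to the behaviour of the twist functor $\La_\cO(\Gamma')\,{}_\alpha\!\otimes_{\La_\cO(\Gamma')}(-)$ on cyclic modules of the form $\La_\cO(\Gamma')/\xi^r\La_\cO(\Gamma')$, and then to check that pseudo-isomorphisms are preserved. First I would record the basic fact that, since $\alpha$ is an automorphism, $\La_\cO(\Gamma')\,{}_\alpha\!\otimes_{\La_\cO(\Gamma')}(-)$ is an exact functor on $\La_\cO(\Gamma')$-modules: indeed by \eqref{e:twistaction2} it is naturally isomorphic to the functor ``restrict scalars along $\alpha^{-1}$'', i.e. keep the underlying $\cO$-module $M$ but let $\lambda$ act as $\alpha^{-1}(\lambda)$ used to act. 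Exactness is then immediate, and additivity (commuting with finite direct sums) is clear from the same description. This already gives the middle equality $\La_\cO(\Gamma')\,{}_\alpha\!\otimes_{\La_\cO(\Gamma')}[M]=\bigoplus_{i=1}^m \La_\cO(\Gamma')\,{}_\alpha\!\otimes_{\La_\cO(\Gamma')}\bigl(\La_\cO(\Gamma')/\xi_i^{r_i}\La_\cO(\Gamma')\bigr)$.

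Next I would compute the twist of a single cyclic piece. Applying the right-exact functor to the presentation $\La_\cO(\Gamma')\xrightarrow{\cdot\xi_i^{r_i}}\La_\cO(\Gamma')\to\La_\cO(\Gamma')/\xi_i^{r_i}\to 0$, and using that $\La_\cO(\Gamma')\,{}_\alpha\!\otimes_{\La_\cO(\Gamma')}\La_\cO(\Gamma')\cong\La_\cO(\Gamma')$ with the map ``multiplication by $\xi_i^{r_i}$'' transported to ``multiplication by $\alpha(\xi_i^{r_i})=\alpha(\xi_i)^{r_i}$'' (this is exactly the defining relation $(\alpha(\lambda)\mu)\otimes m=\mu\otimes\lambda m$ read with $\mu=1$, $m$ the generator), one gets
$$\La_\cO(\Gamma')\,{}_\alpha\!\otimes_{\La_\cO(\Gamma')}\bigl(\La_\cO(\Gamma')/\xi_i^{r_i}\La_\cO(\Gamma')\bigr)\;\cong\;\La_\cO(\Gamma')/\alpha(\xi_i)^{r_i}\La_\cO(\Gamma').$$
Since $\alpha$ is a ring automorphism of the UFD $\La_\cO(\Gamma')\simeq\cO[[T_1,\dots,T_n]]$, it sends irreducibles to irreducibles and units to units, so each $\alpha(\xi_i)$ is again irreducible; in particular the right-hand module has the correct shape to be $[\,\cdot\,]$ of a torsion module. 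Summing over $i$ yields the displayed formula for $\La_\cO(\Gamma')\,{}_\alpha\!\otimes_{\La_\cO(\Gamma')}[M]$, and by definition of $\chi_{\cO,\Gamma'}$ this already gives $\chi_{\cO,\Gamma'}\bigl(\La_\cO(\Gamma')\,{}_\alpha\!\otimes_{\La_\cO(\Gamma')}[M]\bigr)=\bigl(\prod_i\alpha(\xi_i)^{r_i}\bigr)=\alpha\bigl(\chi_{\cO,\Gamma'}(M)\bigr)$, using that $\alpha$ is a ring homomorphism.

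It remains to identify $[\La_\cO(\Gamma')\,{}_\alpha\!\otimes_{\La_\cO(\Gamma')}M]$ with $\La_\cO(\Gamma')\,{}_\alpha\!\otimes_{\La_\cO(\Gamma')}[M]$, i.e. to see that twisting commutes with the operator $[\,\cdot\,]$ of \S\ref{sss:krul}. For this I would take the injective pseudo-isomorphism $\Phi\colon[M]\hookrightarrow M$ supplied there, apply the exact functor $\La_\cO(\Gamma')\,{}_\alpha\!\otimes_{\La_\cO(\Gamma')}(-)$ to get an injection $\La_\cO(\Gamma')\,{}_\alpha\!\otimes[M]\hookrightarrow\La_\cO(\Gamma')\,{}_\alpha\!\otimes M$ whose cokernel is $\La_\cO(\Gamma')\,{}_\alpha\!\otimes\coker\Phi$, and then observe that the twist of a pseudo-null module is pseudo-null: by Lemma \ref{l:psn}, $\coker\Phi$ is killed by relatively prime $f_1,\dots,f_k$ with $k\ge 2$, hence its twist is killed by $\alpha(f_1),\dots,\alpha(f_k)$, which are still relatively prime since $\alpha$ preserves the UFD structure, so Lemma \ref{l:psn} applies again. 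Therefore $\La_\cO(\Gamma')\,{}_\alpha\!\otimes[M]\hookrightarrow\La_\cO(\Gamma')\,{}_\alpha\!\otimes M$ is itself a pseudo-isomorphism from a module of the form \eqref{e:[m]} into $\La_\cO(\Gamma')\,{}_\alpha\!\otimes M$, and by the uniqueness of $[\,\cdot\,]$ it follows that $[\La_\cO(\Gamma')\,{}_\alpha\!\otimes M]=\La_\cO(\Gamma')\,{}_\alpha\!\otimes[M]$. The only genuinely delicate point is keeping the variance straight: one must be careful that the isomorphism $\La_\cO(\Gamma')\,{}_\alpha\!\otimes_{\La_\cO(\Gamma')}\La_\cO(\Gamma')\cong\La_\cO(\Gamma')$ transports ``right multiplication by $\lambda$'' to ``multiplication by $\alpha(\lambda)$'' (and not by $\alpha^{-1}(\lambda)$), which is why the answer involves $\alpha(\xi_i)$ and $\alpha(\chi)$ rather than their inverse-images — everything else is a routine diagram chase once the functor is seen to be exact and additive.
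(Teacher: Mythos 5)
Your proposal is correct and follows essentially the same route as the paper's proof: show the twist functor is exact and additive, that it sends pseudo-null modules to pseudo-null modules via Lemma \ref{l:psn} (coprime annihilators $f_i$ go to coprime $\alpha(f_i)$), and then compute the twist of each cyclic piece $\La_{\cO}(\Gamma')/\xi_i^{r_i}\La_{\cO}(\Gamma')$. Your extra care with the variance (why one gets $\alpha(\xi_i)$ rather than $\alpha^{-1}(\xi_i)$) and the remark that $\alpha$ preserves irreducibility are details the paper leaves implicit, but the argument is the same.
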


\begin{proof}
It is immediate from \eqref{e:twistaction2} that if a pseudo-null $N$ is annihilated by coprime $f_1$, $f_2$, then $\La_{\cO}(\Gamma')\,{}_\alpha\!\otimes N$ is annihilated by coprime $\alpha(f_1)$, $\alpha(f_2)$, whence pseudo-null. Thus the functor $\La_{\cO}(\Gamma')\,{}_\alpha\!\otimes-$ preserves pseudo-isomorphisms. Since it also commutes with direct sums, we are reduced to check the equality
$$\La_{\cO}(\Gamma')\,{}_\alpha\!\otimes\big(\La_{\cO}(\Gamma')/\xi_i^{r_i}\La_{\cO}(\Gamma')\big)=\La_{\cO}(\Gamma')/\alpha(\xi_i)^{r_i}\La_{\cO}(\Gamma')\,,$$
which is obvious by exactness of $\La_{\cO}(\Gamma')\,{}_\alpha\!\otimes-$.
\end{proof}

We apply the above with the isomorphisms considered in \S\ref{ss:iwH1} and write
$$M^\sharp:=\La_{\cO}(\Gamma')\,{}_\sharp\!\otimes_{\La_{\cO}(\Gamma')}M$$
and, for $\phi$ as in (H2),
\begin{equation} \label{e:twistbyphi} M(\phi):=\La_{\cO}(\Gamma')\,{}_{\phi^*}\!\otimes_{\La_{\cO}(\Gamma')}M. \end{equation}
Since $\cdot^\sharp$ is an involution, \eqref{e:twistaction2} shows that the action of $\La_{\cO}(\Gamma')$ becomes $\lambda\cdot m=\lambda^\sharp m$.
As for $\phi^*$, note that, if we endow $\cO$ with the trivial action of $\Gamma'$, then the $\La_{\cO}$-module
$\cO(\phi)$ can be viewed as the free rank one $\cO$-module with the action of $\Gamma'$ through multiplication by $\phi$, in the sense that
$$\gamma \cdot a=\phi(\gamma) a \text{ for all } \gamma\in\Gamma', a\in \cO(\phi)\,.$$
Then for a $\La_{\cO}$-module $M$ we have
$$M(\phi)=\cO(\phi)\otimes_{\cO}M,$$
where $\Gamma'$ acts by
$$\gamma\cdot(a\otimes x):=(\gamma\cdot a)\otimes (\gamma\cdot x)=\phi(\gamma)\cdot (a\otimes \gamma x)\,.$$

\subsection{Some more notation} In order to lighten the notation, for an $\cO$-module $\fM$ and an $\cO$-algebra $R$, we will often use the shortening
$$R\fM:=R\otimes_{\cO}\fM\,.$$

The Pontryagin dual of an abelian group $G$ will be denoted $G^\vee$. Since we are going to deal mostly with finite $p$-groups and their inductive and projective limits, we generally won't distinguish between the Pontryagin dual and the set of continuous homomorphisms into the group of roots of unity $\Bmu_{p^{\infty}}:=\cup_m\Bmu_{p^{m}}$. Note that we shall usually think of $\Bmu_{p^{\infty}}$ as a subset of $\bar\QQ_p$ (hence with the discrete topology), so that for a $\La$-module $M$ homomorphisms in $M^\vee$ will often take value in $\bar\QQ_p$.

We shall denote the $\psi$-part of a $G$-module $M$ (for $G$ a group and $\psi\in G^\vee$) by
\begin{equation} \label{e:psipart} M^{(\psi)}:=\{x\in M\;\mid\; g\cdot x=\psi(g)x\;\text{for all}\;g\in G\}. \end{equation}

\end{subsection}
\end{section}

\begin{section}{The Frobenius Action} \label{s:frobact}

\begin{subsection}{Absolute and relative Frobenius}\label{su:fm}
Let $\cG$ be a commutative finite group scheme over $K$ and denote $\cG_0$ its connected component and $\cG^{\mathrm{\acute et}}$ its maximal \'etale factor. Then we have the connected-\'etale sequence
\begin{equation} \label{e:connetal} 0 \lr \cG_0\, \lr \cG\, \lr \cG^{\mathrm{\acute et}}\, \lr 0. \end{equation}
We shall view $\cG$, $\cG_0$ and $\cG^{\mathrm{\acute et}}$ as sheaves on the flat topology of $K$, so that we can consider their cohomology groups. The base change $K\lr K^{(1/p^m)}$ induces the restriction map:
$$\res_m\colon\coh^1_{\mathrm{fl}}(K,\cG)\,\lr \coh^1_{\mathrm{fl}}(K^{(1/p^m)},\cG).$$

\begin{lemma}\label{l:image}
The image $\res_m(\coh^1_{\mathrm{fl}}(K,\cG))$ injects into $\coh^1_{\mathrm{fl}}(K^{(1/p^m)},\cG^{\mathrm{\acute et}})$ for any $m$ such that $p^m\geq\dim_K K[\cG_0]$.
\end{lemma}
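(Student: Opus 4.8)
The plan is to decompose $\coh^1_{\mathrm{fl}}$ of $\cG$ along the connected–étale sequence \eqref{e:connetal} and to exploit that $K^{(1/p^m)}/K$ is purely inseparable, which affects the connected and the étale parts in opposite ways. Concretely I would reduce the lemma to: (A) the restriction $\res_m\colon\coh^1_{\mathrm{fl}}(K,\cG^{\mathrm{\acute et}})\to\coh^1_{\mathrm{fl}}(K^{(1/p^m)},\cG^{\mathrm{\acute et}})$ is injective; and (B) the restriction $\res_m\colon\coh^1_{\mathrm{fl}}(K,\cG_0)\to\coh^1_{\mathrm{fl}}(K^{(1/p^m)},\cG_0)$ is the zero map when $p^m\geq\dim_KK[\cG_0]$. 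Granting these, the lemma follows by a diagram chase: if $\alpha\in\coh^1_{\mathrm{fl}}(K,\cG)$ restricts to a class mapping to $0$ in $\coh^1_{\mathrm{fl}}(K^{(1/p^m)},\cG^{\mathrm{\acute et}})$, then by functoriality the image $\bar\alpha$ of $\alpha$ in $\coh^1_{\mathrm{fl}}(K,\cG^{\mathrm{\acute et}})$ has $\res_m(\bar\alpha)=0$, so $\bar\alpha=0$ by (A); by the long exact sequence of \eqref{e:connetal} over $K$, $\alpha$ is then the image of some $\gamma\in\coh^1_{\mathrm{fl}}(K,\cG_0)$; and since \eqref{e:connetal} base changes to the connected–étale sequence of $\cG_{K^{(1/p^m)}}$, applying $\res_m$ shows that $\res_m(\alpha)$ is the image of $\res_m(\gamma)=0$, hence $\res_m(\alpha)=0$.

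For (A): since $\cG^{\mathrm{\acute et}}$ is étale, $\coh^1_{\mathrm{fl}}$ agrees with $\coh^1_{\mathrm{\acute et}}$, i.e. with Galois cohomology of the finite module of $\bar K$-points of $\cG^{\mathrm{\acute et}}$, and similarly over $K^{(1/p^m)}$. As $K^{(1/p^m)}/K$ is purely inseparable, $\Frob_{p^m}$ induces — as recalled in \S\ref{su:bn} — an isomorphism $\Gal(\bar K^{(1/p^m)}/K^{(1/p^m)})\cong\Gal(\bar K/K)$ identifying the two Galois modules; under these identifications $\res_m$ is the identity map, hence injective (in fact an isomorphism).

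For (B): the point is that, via the isomorphism $\Frob_{p^m}\colon K^{(1/p^m)}\to K$, base change along $K\hookrightarrow K^{(1/p^m)}$ is computed by the $p^m$-power Frobenius; precisely, $\Frob_{p^m}$ identifies $(\cG_0)_{K^{(1/p^m)}}$ with the twist $\cG_0^{(p^m)}:=\cG_0\times_{K,\Frob_{p^m}}K$, and under this identification $\res_m$ becomes the map induced on $\coh^1_{\mathrm{fl}}(K,-)$ by the relative $p^m$-Frobenius $F^{(m)}\colon\cG_0\to\cG_0^{(p^m)}$. It therefore suffices to show $F^{(m)}$ is the trivial homomorphism once $p^m\geq\dim_KK[\cG_0]$. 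Write $K[\cG_0]=K\oplus\fm$ with $\fm$ the augmentation ideal; since $\cG_0$ is connected, $K[\cG_0]$ is a finite local $K$-algebra with residue field $K$, so $\fm$ is nilpotent, and the strictly descending chain $K[\cG_0]\supsetneq\fm\supsetneq\fm^2\supsetneq\cdots$ forces its index of nilpotency to be at most $\dim_KK[\cG_0]\leq p^m$; hence $\fm^{p^m}=0$. On coordinate rings $F^{(m)}$ is $r\mapsto r^{p^m}$, and for $r=a+x$ with $a\in K$, $x\in\fm$ one gets $r^{p^m}=a^{p^m}+x^{p^m}=a^{p^m}\in K$, since the $p^m$-power map is a ring homomorphism and $x^{p^m}\in\fm^{p^m}=0$. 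Thus $F^{(m)}$ factors through the unit section and is trivial, so it induces $0$ on cohomology.

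The main obstacle is (B), and inside it the identification of $\res_m$ with the relative Frobenius: this is exactly where the specific shape of $K^{(1/p^m)}/K$ (adjoining $p^m$-th roots of everything) matters, and one must also invoke the structure of connected finite group schemes — that they are infinitesimal and killed by a high power of Frobenius — in the sharp form given by the elementary bound (index of nilpotency of $\fm$) $\leq\dim_KK[\cG_0]$, which is what makes the numerical hypothesis do its job. If one prefers to avoid the Frobenius-twist formalism, (B) can instead be proved by representing a class by a $\cG_0$-torsor $\Spec B$ with $\dim_KB=\dim_KK[\cG_0]$: vanishing of $\coh^1_{\mathrm{fl}}(\bar K,-)$ on finite group schemes makes $B\otimes_K\bar K$ local, hence $B$ is local with purely inseparable residue field, and a rank count gives $b^{p^m}\in K$ for all $b\in B$, so $b\mapsto(b^{p^m})^{1/p^m}$ is a $K$-algebra map $B\to K^{(1/p^m)}$ and the torsor trivialises over $K^{(1/p^m)}$.
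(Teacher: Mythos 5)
Your proof is correct, but it takes a genuinely different route from the paper's. The paper does not split the problem into your steps (A) and (B): it proves the single stronger statement that $\coh^1_{\mathrm{fl}}(K^{(1/p^m)},\cG_0)=0$ once $p^m\geq\dim_K K[\cG_0]$, by computing $\coh^1_{\mathrm{fl}}(K,\cG_0)$ as \v{C}ech cohomology, interpreting a class as a principal homogeneous space $\cP$ under $\cG_0$, and observing that $K[\cP]$ is local with purely inseparable residue field of degree at most $\dim_K K[\cG_0]$, so that $\cP$ acquires a point (hence trivializes) over $K^{(1/p^m)}$; the lemma then follows from the cohomology sequence of \eqref{e:connetal} over $K^{(1/p^m)}$ alone, with no need for your step (A) and no diagram chase back over $K$. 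Your argument instead shows that the restriction map on $\coh^1_{\mathrm{fl}}(-,\cG_0)$ is zero, by identifying $\res_m$ (composed with the isomorphism induced by $\Frob_{p^m}$) with pushforward along the relative Frobenius $\cG_0\to\cG_0^{(p^m)}$ and checking that this morphism is the zero homomorphism because the augmentation ideal $\fm\subset K[\cG_0]$ has nilpotency index at most $\dim_K K[\cG_0]\leq p^m$; this is precisely the mechanism the paper sets up immediately afterwards (diagram \eqref{d:frob} and Lemma \ref{l:absrelfrob}) for the abelian variety, so in effect you front-load that formalism into the proof of the present lemma. The price is that you must also supply (A), but that is harmless, since a purely inseparable base change does not change Galois cohomology of an \'etale group scheme. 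Your parenthetical alternative for (B) --- trivializing a $\cG_0$-torsor $\Spec B$ over $K^{(1/p^m)}$ by a rank count on the local ring $B$ --- is essentially the paper's own argument. Both proofs are sound; the paper's is more economical and yields the vanishing of the whole group $\coh^1_{\mathrm{fl}}(K^{(1/p^m)},\cG_0)$, while yours isolates the role of the relative Frobenius, which is the theme of the rest of \S\ref{su:fm}.
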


\begin{proof}
Suppose $\xi\in \coh^1_{\mathrm{fl}}(K,\cG_0)$. Since $\coh^1_{\mathrm{fl}}(K,\cG_0)$ can be computed as the \v{C}ech cohomology (see \cite[Proposition III.6.1]{mil86})
$$ \breve{\coh}^1(K,\cG_0):=\breve{\coh}^1(K^a/K,\cG_0),$$
by \cite[Corollary III.4.7]{mil80} the class $\xi$ corresponds to a principal homogeneous space $\cP$ for $\cG_0$ such that $\cP(F)$, the set of $F$-points of $\cP$, is non-empty for some finite extension $F/K$. The field $F$ can be taken as the residue field of the coordinate
ring $K[\cP]$ modulo a maximal ideal.
As $\cP(K^a)=\cG_0(K^a)=\{id\}$, the ring $K[\cP]$ must contain exactly one maximal ideal and the residue field must be a purely inseparable extension over $K$. This shows that $F$ can be taken as $K^{(1/p^n)}$ for some $n$, and hence $\cG_0\simeq\cP$ over $K^{(1/p^n)}$. Also, since $K[\cP]$ is a finite algebra over $K$ with dimension the same as $\dim_K K[\cG_0]:=f$, the degree of $F/K$ is at most $f$. Therefore, $\coh^1_{\mathrm{fl}}(K^{(1/p^m)},\cG_0)=0$ for $p^m\geq f$.

To conclude, it is enough to take the cohomology of the sequence \eqref{e:connetal}.
\end{proof}

\begin{remark} {\em If $\cG$ is defined over a perfect field then \eqref{e:connetal} splits and the decomposition
$$\cG=\cG_0\times \cG^{\mathrm{\acute et}}$$
holds (see \cite[\S6.8]{wat79}). In particular we have
\begin{equation}\label{e:dec}
\coh^1_{\mathrm{fl}}(K,\cG)=\coh^1_{\mathrm{fl}}(K,\cG_0)\times \coh^1_{\mathrm{fl}}(K,\cG^{\mathrm{\acute et}}).
\end{equation} }
\end{remark}

\subsubsection{Application to abelian varieties} \label{sss:applabvar} Assume that $B/K$ is an abelian variety. Let $B_{p^n}^{\mathrm{\acute et}}$ denote the maximal \'etale factor of $B_{p^n}$.
Lemma \ref{l:image} immediately yields the following.

\begin{corollary}\label{c:image2}
For each $n$, there exists some $m$ so that $\res_m(\coh^1_{\mathrm{fl}}(K,B_{p^n}))$ injects into $\coh^1_{\mathrm{fl}}(K^{(1/p^m)},B_{p^n}^{\mathrm{\acute et}})$.
\end{corollary}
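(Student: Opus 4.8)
The plan is to derive Corollary \ref{c:image2} as a direct application of Lemma \ref{l:image} to the finite group scheme $\cG = B_{p^n}$ over $K$. First I would observe that for a fixed $n$, the $p^n$-torsion subscheme $B_{p^n}$ is a commutative finite group scheme over $K$, so Lemma \ref{l:image} applies to it: the connected-\'etale sequence $0 \to (B_{p^n})_0 \to B_{p^n} \to B_{p^n}^{\mathrm{\acute et}} \to 0$ holds, and the maximal \'etale factor $B_{p^n}^{\mathrm{\acute et}}$ is the object appearing in the statement to be proven.

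Next I would unwind the quantifiers. Lemma \ref{l:image} says precisely that whenever $p^m \geq \dim_K K[(B_{p^n})_0]$, the image $\res_m\big(\coh^1_{\mathrm{fl}}(K, B_{p^n})\big)$ injects into $\coh^1_{\mathrm{fl}}(K^{(1/p^m)}, B_{p^n}^{\mathrm{\acute et}})$. Since $B_{p^n}$ is a finite group scheme over $K$, the coordinate ring $K[B_{p^n}]$ is a finite-dimensional $K$-algebra, and hence so is the coordinate ring of its connected component $(B_{p^n})_0$; thus $f_n := \dim_K K[(B_{p^n})_0]$ is a well-defined finite integer. Choosing any $m$ with $p^m \geq f_n$ — for instance $m = \lceil \log_p f_n \rceil$ — then gives the desired injection, which is exactly the content of the corollary.

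This is a routine dévissage with no real obstacle: the only point requiring a word of care is the finiteness of $f_n$, which is immediate from the fact that $B_{p^n}$ is a \emph{finite} group scheme over $K$ (being the kernel of the finite flat isogeny $[p^n]\colon B \to B$), so its affine coordinate ring, and a fortiori that of the closed subscheme $(B_{p^n})_0$, is a finite $K$-module. I would therefore present the proof in essentially one line: apply Lemma \ref{l:image} with $\cG = B_{p^n}$ and any $m$ such that $p^m \geq \dim_K K[(B_{p^n})_0]$, noting that such $m$ exists because $B_{p^n}$ is finite over $K$. If desired, one can additionally remark that the integer $m$ can be taken uniformly in a bounded range as $n$ varies only if one is in the ordinary situation, but that refinement is not needed here since the corollary only asserts existence of $m$ for each individual $n$.
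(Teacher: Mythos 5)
Your proof is correct and is exactly the paper's argument: the corollary is stated as an immediate consequence of Lemma \ref{l:image} applied to $\cG = B_{p^n}$, choosing $m$ with $p^m \geq \dim_K K[(B_{p^n})_0]$, which is finite since $B_{p^n}$ is a finite group scheme. Your added remark on the finiteness of the coordinate ring is a harmless elaboration of the same dévissage.
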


By definition, the abelian variety $B^{(p^m)}:=B\times_K K$ is the base change over the absolute Frobenius $\Frob_{p^m}\colon K\rightarrow K$; as a sheaf on the flat topology of $K$, $B^{(p^m)}/K$ is identified with the inverse image $\Frob_{p^m}^*B$ (\cite[Remark II.3.1.(d)]{mil80}). As we noticed before, the absolute Frobenius factors through
\[ \begin{CD} K @>>> K^{(1/p^m)} @>{Frob_{p^m}}>> K. \end{CD} \]
Therefore we get a commutative diagram
\begin{equation} \begin{CD} \label{d:frob} \coh^1_{\mathrm{fl}}(K,B_{p^n})  @>\res_m>> \coh^1_{\mathrm{fl}}( K^{(1/p^m)},B_{p^n}^{\mathrm{\acute et}}) \\
@|   @VV{Frob_{p^m}^*}V  \\
\coh^1_{\mathrm{fl}}(K,B_{p^n}) @>{\Frob_{p^m}^*}>> \coh^1_{\mathrm{fl}}(K,(B^{\mathrm{\acute et}}_{p^n})^{(p^m)}).
\end{CD} \end{equation}

Let $\tF^{(m)}_B\colon B\rightarrow B^{(p^m)}$ denote the relative Frobenius homomorphism: it is the map induced by the commutative diagram
\[\begin{CD} B  @>{\Frob_{p^m}}>> B \\
@VVV    @VVV  \\
K @>{\Frob_{p^m}}>> K. \end{CD}\]

\begin{lemma} \label{l:absrelfrob} As maps of sheaves on the flat topology of $K$, $(\tF^{(m)}_B)_*$ and $\Frob_{p^m}^*$ coincide.
\end{lemma}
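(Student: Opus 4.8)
The plan is to unwind the definitions of the absolute and relative Frobenius and compare the induced maps on flat cohomology through the factorization $\Frob_{p^m}\colon K\to K^{(1/p^m)}\to K$ already used in diagram \eqref{d:frob}. First I would recall the universal property that characterizes $\tF^{(m)}_B$: the diagram defining the relative Frobenius exhibits $B^{(p^m)}=B\times_{K,\Frob_{p^m}}K$, and $\tF^{(m)}_B$ is the unique $K$-morphism making the square with the two absolute Frobenii commute. Equivalently, under the identification of $B^{(p^m)}$ with $\Frob_{p^m}^*B$ as a sheaf on the flat site (cited from \cite[Remark II.3.1.(d)]{mil80}), the absolute Frobenius on $B$ factors as $\tF^{(m)}_B$ followed by the canonical base-change projection $B^{(p^m)}\to B$ lying over $\Frob_{p^m}\colon K\to K$.

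Next I would translate this into cohomology. Applying $\coh^1_{\mathrm{fl}}(K,-)$, the factorization of the absolute Frobenius on $B$ gives $\Frob_{p^m}^* = (\tF^{(m)}_B)_*$ composed with the map induced by the base-change projection; but the latter map, on the level of the flat site, is exactly the identification built into the statement ``$B^{(p^m)}/K$ is identified with $\Frob_{p^m}^*B$''. Concretely, I would argue that the right-hand vertical arrow $\Frob_{p^m}^*$ in \eqref{d:frob} is, by functoriality of pullback along $\Frob_{p^m}\colon K^{(1/p^m)}\to K$, the same as the map $(\tF^{(m)}_B)_*$ precomposed with $\res_m$, and then check that $\res_m$ is invisible after the identification — i.e. that the composite $K\to K^{(1/p^m)}$ together with $Frob_{p^m}\colon K^{(1/p^m)}\to K$ recovers $\Frob_{p^m}$ on the base, so that pullback of $B$ along the whole composite agrees with $\tF^{(m)}_B$ by its defining universal property. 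This is essentially a diagram chase organized around \eqref{d:frob}, using that relative Frobenius is compatible with base change.

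I expect the main obstacle to be purely bookkeeping: keeping straight which Frobenius is ``absolute'' (acting on both $B$ and $K$), which is ``relative'' (a $K$-morphism $B\to B^{(p^m)}$), and on which copy of $K$ the various structure maps live, so that the identification $B^{(p^m)}\cong\Frob_{p^m}^*B$ is applied consistently and the map ``$(\tF^{(m)}_B)_*$'' genuinely lands in $\coh^1_{\mathrm{fl}}(K,(B^{\mathrm{\acute et}}_{p^n})^{(p^m)})$ as displayed. Once the identifications are pinned down, the equality $(\tF^{(m)}_B)_* = \Frob_{p^m}^*$ follows formally from the universal property of relative Frobenius and the functoriality of $\coh^1_{\mathrm{fl}}$; restricting to the $p^n$-torsion and its maximal étale quotient is harmless since relative Frobenius is a group homomorphism and commutes with passing to kernels of multiplication by $p^n$ and to connected–étale quotients.
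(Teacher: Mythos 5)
Your proposal is correct and follows essentially the same route as the paper: the paper's proof simply evaluates both maps on a section $s$ of $B$, uses the universal property of the fibre product $B^{(p^m)}=B\times_{K,\Frob_{p^m}}K$ together with $\pi\circ\tF^{(m)}_B=\Frob_{p^m}$, and reduces everything to the identity $\Frob_{p^m}\circ s=s\circ\Frob_{p^m}$, i.e.\ the naturality of the absolute Frobenius on $\FF_p$-schemes. The only point worth making explicit in your write-up is that last identity, which is the single non-tautological input; the detour through $\coh^1_{\mathrm{fl}}$ and diagram \eqref{d:frob} is not needed, since the lemma is a statement about maps of sheaves and is checked directly on sections.
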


\begin{proof} Denote by $\pi$ the projection $B^{(p^m)}=B\times_K K\rightarrow B$. Let $s$ be a section of $B/K$: by definition $(\tF^{(m)}_B)_*(s)=\tF^{(m)}_B\circ s$, while $\Frob_{p^m}^*(s)$ is the unique section such that $\pi\circ\Frob_{p^m}^*(s)=s\circ\Frob_{p^m}$. Since $\pi\circ\tF^{(m)}_B=\Frob_{p^m}$, the claim is reduced to check that $\Frob_{p^m}\circ s=s\circ\Frob_{p^m}$ and this follows from the trivial observation that the map $x\mapsto x^{p^m}$ commutes with every homomorphism of $\FF_p$-algebras.
\end{proof}

Together with Corollary \ref{c:image2} and diagram \eqref{d:frob}, Lemma \ref{l:absrelfrob} implies the following corollary in an obvious way.

\begin{corollary}\label{c:image1}
For each $n$, there exists some $m$ so that $(\tF^{(m)}_B)_*(\coh^1_{\mathrm{fl}}(K,B_{p^n}))$, and hence $(\tF^{(m)}_B)_*(\Sel_{p^n}(B/K))$, is contained in $\coh^1_{\mathrm{fl}}(K,(B_{p^n}^{\mathrm{\acute et}})^{(p^m)})$.
\end{corollary}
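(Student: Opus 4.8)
The plan is to derive the corollary from Lemma~\ref{l:absrelfrob} by chasing diagram~\eqref{d:frob}, with Corollary~\ref{c:image2} used to pin down the restriction map. Fix $n$. First I would invoke Corollary~\ref{c:image2} to choose $m$ large enough --- it suffices that $p^m$ exceed the $K$-dimension of the coordinate ring of the connected part $(B_{p^n})_0$ of $B_{p^n}$ --- so that $\res_m$ carries $\coh^1_{\mathrm{fl}}(K,B_{p^n})$ into $\coh^1_{\mathrm{fl}}(K^{(1/p^m)},B_{p^n}^{\mathrm{\acute et}})$. This is just Lemma~\ref{l:image} applied to $\cG=B_{p^n}$: for such $m$ one has $\coh^1_{\mathrm{fl}}(K^{(1/p^m)},(B_{p^n})_0)=0$, so the connected-\'etale sequence makes $\coh^1_{\mathrm{fl}}(K^{(1/p^m)},B_{p^n})$ inject into $\coh^1_{\mathrm{fl}}(K^{(1/p^m)},B_{p^n}^{\mathrm{\acute et}})$, and the image of $\res_m$ is visible there.

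Next I would read off the commutativity of diagram~\eqref{d:frob}. Going down the left column (the identity) and then along the bottom computes the absolute Frobenius pullback $\Frob_{p^m}^*$; going along the top and then down the right column computes $Frob_{p^m}^*\circ\res_m$. Now the right-hand vertical arrow $Frob_{p^m}^*$ is the map induced by the ring isomorphism $Frob_{p^m}\colon K^{(1/p^m)}\to K$, hence is itself an isomorphism $\coh^1_{\mathrm{fl}}(K^{(1/p^m)},B_{p^n}^{\mathrm{\acute et}})\iso\coh^1_{\mathrm{fl}}(K,(B_{p^n}^{\mathrm{\acute et}})^{(p^m)})$. Putting the two together, $\Frob_{p^m}^*$ restricted to $\coh^1_{\mathrm{fl}}(K,B_{p^n})$ factors as $\res_m$ followed by this isomorphism; by the choice of $m$ in the first step, its image therefore lies inside $\coh^1_{\mathrm{fl}}(K,(B_{p^n}^{\mathrm{\acute et}})^{(p^m)})$.

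Finally I would apply Lemma~\ref{l:absrelfrob}, which identifies $(\tF^{(m)}_B)_*$ with $\Frob_{p^m}^*$ as morphisms of flat sheaves on $K$, hence identifies the induced maps on $\coh^1_{\mathrm{fl}}(K,B_{p^n})$. Transporting the conclusion of the previous step along this identification gives that $(\tF^{(m)}_B)_*\big(\coh^1_{\mathrm{fl}}(K,B_{p^n})\big)$ is contained in $\coh^1_{\mathrm{fl}}(K,(B_{p^n}^{\mathrm{\acute et}})^{(p^m)})$. The statement for Selmer groups is then a formality, since $\Sel_{p^n}(B/K)$ is by definition a subgroup of $\coh^1_{\mathrm{fl}}(K,B_{p^n})$ and $(\tF^{(m)}_B)_*$ is a group homomorphism, so $(\tF^{(m)}_B)_*\big(\Sel_{p^n}(B/K)\big)\subseteq(\tF^{(m)}_B)_*\big(\coh^1_{\mathrm{fl}}(K,B_{p^n})\big)\subseteq\coh^1_{\mathrm{fl}}(K,(B_{p^n}^{\mathrm{\acute et}})^{(p^m)})$.

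I do not expect a serious obstacle: all the real work has already gone into Corollary~\ref{c:image2} (ultimately Lemma~\ref{l:image}, which relies on the \v{C}ech description of $\coh^1_{\mathrm{fl}}(K,\cG_0)$ and on the fact that a torsor under a connected finite group scheme splits over a purely inseparable extension of bounded degree) and into Lemma~\ref{l:absrelfrob}. What remains is a pure diagram chase. The only thing to keep careful track of is the choice of $m$: it must be large enough for Lemma~\ref{l:image} to kill $\coh^1_{\mathrm{fl}}(K^{(1/p^m)},(B_{p^n})_0)$, and the bound so produced depends only on $n$ through $\dim_K K[(B_{p^n})_0]$ --- which is exactly the uniformity the statement claims.
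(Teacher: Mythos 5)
Your proof is correct and follows exactly the route the paper intends: it deduces the statement from Corollary \ref{c:image2}, the commutativity of diagram \eqref{d:frob}, and the identification $(\tF^{(m)}_B)_*=\Frob_{p^m}^*$ of Lemma \ref{l:absrelfrob}, which is precisely what the paper asserts "in an obvious way." You have merely written out the diagram chase that the paper omits.
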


\end{subsection}


\begin{subsection}{The Frobenius decomposition}\label{su:fa} In the case of our constant abelian variety $A$, the map $\tF_{A,\,q}:=\tF_A^{(a)}$ (with $a$ such that $q=|\FF|=p^a$) is an endomorphism and it satisfies $\tF_A^{(am)}=\tF_{A,\,q}^m\,$. In the following we shall generally shorten $\tF_{A,\,q}$ to $\tF_q\,$.

Let $\E=\E_A$ denote the ring of endomorphisms of $A/K$ and write $\ZZ_p\,\E:=\ZZ_p\otimes_{\ZZ}\E$.
\begin{lemma}\label{l:silly} The endomorphism rings $\E_A$ is commutative.
\end{lemma}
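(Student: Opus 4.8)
The plan is to reduce the problem to the well-known structure theory of endomorphism algebras of abelian varieties over finite fields. First I would recall that $A$ is by hypothesis an \emph{ordinary} abelian variety defined over the finite field $\FF$, and that all geometric endomorphisms of a constant abelian variety are already defined over the (finite, hence perfect) field $\FF$: indeed $\E_A=\operatorname{End}_K(A)=\operatorname{End}_\FF(A)$ because $A$ is constant (any endomorphism is defined over a finite extension of $\FF$ inside $K$, and conjugating by Frobenius shows it descends). So it suffices to prove that $\operatorname{End}_\FF(A)$ is commutative for $A$ ordinary over $\FF$.

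Next I would invoke the Tate/Honda--Tate description together with the ordinariness hypothesis. Writing $\tF_q$ for the $q$-power relative Frobenius (as introduced just before the statement), $\tF_q$ is a central element of $\E_A\otimes\QQ$, and by Tate's theorem $\E_A\otimes\QQ$ acts faithfully on the $p$-adic (or $\ell$-adic) Tate module, with $\QQ(\tF_q)$ its center on each isogeny factor. The key input is that for an \emph{ordinary} abelian variety the characteristic polynomial of $\tF_q$ has, on each simple isogeny factor, distinct roots and in fact the Newton polygon has slopes only $0$ and $1$, each with multiplicity $\dim$; equivalently, exactly half the roots are $p$-adic units. This forces $\QQ(\tF_q)$ to be a field of degree $2\dim$ on each simple factor, i.e. $\tF_q$ generates a maximal commutative subalgebra, so $\E_A\otimes\QQ$ is a product of fields (it is commutative), hence $\E_A$ is commutative. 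Concretely: a simple abelian variety over $\FF$ has $\operatorname{End}^0$ a division algebra with center $\QQ(\tF_q)$, and its degree over the center is a square $e^2$ with $e^2\cdot[\QQ(\tF_q):\QQ]=2\dim$; ordinariness gives $[\QQ(\tF_q):\QQ]=2\dim$, forcing $e=1$. Finally, an ordinary abelian variety is isogenous to a product of simple \emph{ordinary} ones with pairwise "coprime" Frobenius data, so no matrix algebras appear in the product decomposition and $\E_A\otimes\QQ$ is commutative; intersecting with the integral structure, $\E_A$ is commutative.

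The main obstacle, and the only genuinely substantive point, is the input that ordinariness of $A$ implies $[\QQ(\tF_q):\QQ]=2\dim A$ on each simple factor (equivalently, that the local invariant of the division algebra at the places above $p$ vanishes because the slopes are $0$ and $1$, and that $A$ ordinary has $A[p]$ of the correct rank so the relevant $p$-adic valuations are $0$ or $1$). This is standard — it is exactly Tate's computation of $\operatorname{End}^0$ via local invariants (cf. the Honda--Tate theory) combined with the definition of ordinary — so in the write-up I would simply cite the relevant statement (e.g. Tate, "Endomorphisms of abelian varieties over finite fields", or Waterhouse--Milne) rather than reprove it. Everything else (descent of endomorphisms from $K$ to $\FF$, passing between $\E_A$ and $\E_A\otimes\QQ$) is routine.
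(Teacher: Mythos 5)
Your route is the standard one and is exactly what lies behind the paper's proof, which is nothing more than a citation of \cite[Proposition 7.1]{wat69}: reduce to simple factors, invoke Tate's description of $\QQ\otimes\E_A$ as a division algebra with center $\QQ(\tF_q)$, and use ordinariness to kill the local invariants at $p$ (slopes $0$ and $1$), forcing the division algebra to equal its center, a CM field of degree $2\dim$. Two remarks on the details. First, a harmless slip: Tate's numerical relation is $e\cdot[\QQ(\tF_q):\QQ]=2\dim$ with $e^2=[D:\QQ(\tF_q)]$, not $e^2\cdot[\QQ(\tF_q):\QQ]=2\dim$; either version yields $e=1$ once the center has full degree, so your conclusion stands.

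Second, and more seriously, your last step --- ``an ordinary abelian variety is isogenous to a product of simple ordinary ones with pairwise coprime Frobenius data, so no matrix algebras appear'' --- is false: an ordinary $A$ can perfectly well have repeated simple isogeny factors (take $A=E\times E$ with $E$ an ordinary elliptic curve over $\FF$), and then $\QQ\otimes\E_A$ contains $M_2$ of a CM field and is not commutative. In other words, the lemma as stated is false without a multiplicity-one (or simplicity) hypothesis, and no proof can close this gap; the correct assertion is that $\E_A$ is commutative when the simple isogeny factors of $A$ are pairwise non-isogenous, in particular when $A$ is simple. This is a defect of the statement rather than of your mechanism: the paper only ever uses the lemma for simple $A$ (in the proof of Lemma \ref{l:ses}, which explicitly reduces to that case) and to conclude $\tV_q^{(m)}=\tV_q^m$, which needs only that $\tF_q$ and $\tV_q$ commute with each other --- automatic since $\tV_q=[q]\tF_q^{-1}$ in the isogeny category. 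In your write-up you should therefore state the hypothesis explicitly rather than assert the ``coprime Frobenius data'' claim.
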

\begin{proof}
\cite[Proposition 7.1]{wat69}
\end{proof}
We have
$$[q^m]=\tV_q^{(m)}\circ \tF_q^m$$
for some $\tV_q^{(m)}\in \E$. Write $\tV_q$ for $\tV_q^{(1)}$.
The above lemma implies that
$$[q^m]=[q]^m=\tV_q^m\circ \tF_q^m,$$
and hence $\tV_q^{(m)}=\tV_q^m$.
The following lemma is from \cite[Theorem 7.2]{wat69}.



\begin{lemma}\label{l:unit}
The operator $\tV_q^m+\tF_q^m$ is invertible in $\ZZ_p\,\E$ for all $m\geq1$.
\end{lemma}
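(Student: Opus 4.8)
The plan is to reduce the statement to a computation of $p$-adic valuations and then read it off from ordinarity. Since $A$ is an abelian variety, $\E_{\QQ}:=\E\otimes_{\ZZ}\QQ$ is semisimple, and it is commutative by Lemma \ref{l:silly}; hence it is a finite product of number fields and $\QQ_p\,\E:=\QQ_p\otimes_{\ZZ}\E\cong\prod_k L_k$ is a finite product of finite extensions $L_k/\QQ_p$. The subring $\ZZ_p\,\E$ is an order in $\QQ_p\,\E$ whose integral closure $\prod_k\cO_{L_k}$ is module-finite over it. I would first record the elementary observation that $x\in\ZZ_p\,\E$ is a unit in $\ZZ_p\,\E$ if and only if it is a unit in $\prod_k\cO_{L_k}$, i.e. iff $v_k(x)=0$ for every $k$, where $v_k$ is the valuation of $L_k$: indeed, if $x^{-1}$ lies in $\prod_k\cO_{L_k}$ then $x^{-1}$ is integral over $\ZZ_p\,\E$, and clearing denominators in a monic equation for $x^{-1}$ puts $x^{-1}$ back in $\ZZ_p\,\E$.

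Next I would compute the valuations of the components of $\tF_q$ and $\tV_q$. Let $P_A(T)\in\ZZ[T]$ be the characteristic polynomial of $\tF_q$ on an $\ell$-adic Tate module $T_\ell A$ ($\ell\ne p$); by Cayley--Hamilton $P_A(\tF_q)=0$ in $\E$. Fix $k$ and write $\alpha\in L_k$ for the component of $\tF_q$ and $\beta\in L_k$ for that of $\tV_q$: then $\alpha$ is a root of $P_A$, and $\tF_q\circ\tV_q=[q]$ gives $\alpha\beta=q$ in $L_k$, so $\alpha,\beta\ne0$ and $\beta=q/\alpha$. Normalise $v_k$ so that $v_k(q)=1$. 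Since $\tF_q,\tV_q\in\E$ we have $v_k(\alpha),v_k(\beta)\ge0$ with $v_k(\alpha)+v_k(\beta)=1$, while ordinarity of $A$ means precisely that the Newton polygon of $P_A$ at $p$, normalised by $v_p(q)=1$, has only the slopes $0$ and $1$, so $v_k(\alpha)\in\{0,1\}$. Hence exactly one of $v_k(\alpha),v_k(\beta)$ is $0$ and the other is $1$.

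Finally, for $m\ge1$ the component of $\tV_q^m+\tF_q^m$ in $L_k$ is $\beta^m+\alpha^m$, and the two valuations $m\,v_k(\alpha)$ and $m\,v_k(\beta)$ are $0$ and $m$ in some order, hence distinct; the ultrametric inequality then gives $v_k(\beta^m+\alpha^m)=\min\{0,m\}=0$. As $k$ was arbitrary, the first paragraph shows that $\tV_q^m+\tF_q^m$ is a unit in $\ZZ_p\,\E$. I do not expect a real obstacle here: once one grants the standard characterisation of ordinarity through the slopes of $P_A$, the argument is formal; the only point asking for a little attention is the invertibility criterion for the possibly non-maximal order $\ZZ_p\,\E$ in the first paragraph.
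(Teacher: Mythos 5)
Your argument is correct, but it is genuinely different from what the paper does: the paper gives no proof at all for Lemma \ref{l:unit}, deferring entirely to \cite[Theorem 7.2]{wat69}, whereas you supply a self-contained valuation-theoretic argument. Your two key points both hold up: (i) the criterion that an element of the (possibly non-maximal) order $\ZZ_p\,\E$ is a unit there as soon as its inverse in $\QQ_p\,\E\cong\prod_k L_k$ is integral — the clearing-of-denominators step $x^{-1}=-(a_{n-1}+a_{n-2}x+\dots+a_0x^{n-1})$ is exactly right and is the one place needing care; and (ii) the slope computation, where semisimplicity plus Lemma \ref{l:silly} give the product-of-fields decomposition, $P_A(\tF_q)=0$ identifies each component $\alpha$ of $\tF_q$ with a Frobenius eigenvalue, and ordinarity forces $\{v_k(\alpha),v_k(\beta)\}=\{0,1\}$, so the ultrametric inequality gives $v_k(\alpha^m+\beta^m)=0$. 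Note that the slope fact you derive from the Newton polygon is also recorded later in the paper itself in \S\ref{ss:twist} (via \cite[Corollary 4.37]{mazur72}, giving $\alpha_i\in\cO^\times$, $\beta_i\in q\cO$), so your proof could even be shortened by quoting that. What your route buys is transparency and independence from Waterhouse's structure theory; what the citation buys is brevity and the stronger structural statement (the splitting $\ZZ_p\,\E=\F\times\V$ of Corollary \ref{c:Esplits}, which the paper then deduces from the lemma, is essentially what Waterhouse proves directly).
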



\begin{corollary} \label{c:Esplits}
We have the decomposition of $\ZZ_p$-algebra:
$$\ZZ_p\,\E=\F\times \V\,,$$
where
$$\F:=\bigcap_{m\geq0}\tF_q^m(\ZZ_p\,\E) \hspace{17pt}{ and }\hspace{17pt} \V:=\bigcap_{m\geq0}\tV_q^m(\ZZ_p\,\E) \,. $$
\end{corollary}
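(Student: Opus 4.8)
The plan is to realize $\F$ and $\V$ as complementary ideals cut out by a single idempotent, exploiting the structure of $R:=\ZZ_p\,\E$ as a finite $\ZZ_p$-algebra. By Lemma \ref{l:silly} the ring $R$ is commutative, and since $\E$ is free of finite rank over $\ZZ$, $R$ is free of finite rank over $\ZZ_p$; hence $R$ is a Noetherian semilocal ring, complete for the $p$-adic topology, and so splits as a finite product $R=\prod_{\fm}R_{\fm}$ of complete local rings indexed by its (finitely many) maximal ideals, each residue field having characteristic $p$, so that $p\in\fm$ for every $\fm$. Put $F:=\tF_q$ and $V:=\tV_q$ in $R$ and recall $FV=[q]=p^{a}\cdot 1_R$ (with $q=p^{a}$). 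Note that $\F$ and $\V$ are ideals, being intersections of the principal ideals $\tF_q^{m}R$, resp.\ $\tV_q^{m}R$.

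First I would establish the key dichotomy: for every maximal ideal $\fm$, exactly one of $F$, $V$ belongs to $\fm$. Indeed $p^{a}=FV\in\fm$ forces at least one of them into $\fm$, while Lemma \ref{l:unit} with $m=1$ says $F+V$ is a unit of $R$, so they cannot both lie in $\fm$. This partitions the maximal ideals into $S_F\sqcup S_V$ with $S_F=\{\fm\mid F\notin\fm\}$ and $S_V=\{\fm\mid V\notin\fm\}=\{\fm\mid F\in\fm\}$. I would then compute $\bigcap_m F^{m}R$ factor by factor: if $\fm\in S_F$, then $F$ is a unit in the local ring $R_{\fm}$, whence $F^{m}R_{\fm}=R_{\fm}$ for all $m$; if $\fm\in S_V$, then $F\in\fm$, so $F^{m}R_{\fm}\subseteq\fm^{m}$ and Krull's intersection theorem gives $\bigcap_m F^{m}R_{\fm}=0$. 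Since the product is finite, the intersection is computed componentwise, so $\F=\prod_{\fm\in S_F}R_{\fm}$, and symmetrically $\V=\prod_{\fm\in S_V}R_{\fm}$. These being complementary sub-products of $R=\prod_{\fm}R_{\fm}$, we get $R=\F\times\V$ as $\ZZ_p$-algebras, which is the claim.

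I do not expect a genuine obstacle: once the dichotomy $F\in\fm\Leftrightarrow V\notin\fm$ is in hand, the rest is formal, the only nontrivial (but standard) ingredients being that a finite $\ZZ_p$-algebra is a finite product of complete local rings, that $\bigcap_m\fm^{m}=0$ in each factor, and that $\bigcap_m$ commutes with a finite product. Should one prefer to avoid the structure theory, the idempotent can instead be produced directly: using Lemma \ref{l:unit} one checks $F^{m}R+V^{m}R=R$ and $F^{m}R\cap V^{m}R=p^{am}R$ for all $m$, so the Chinese Remainder Theorem gives idempotents $e_m\in R/p^{am}R$, reducing to $1$ in $R/F^{m}R$ and to $0$ in $R/V^{m}R$; uniqueness of such an idempotent makes the $e_m$ compatible, and $e:=\varprojlim_m e_m\in R$ (by $p$-adic completeness) satisfies $\V=eR$ and $\F=(1-e)R$. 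In either approach the whole weight of the argument rests on Lemmas \ref{l:silly} and \ref{l:unit}.
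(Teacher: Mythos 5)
Your proof is correct, but it takes a genuinely different route from the paper's. The paper argues by compactness: since $\ZZ_p\,\E$ is a compact topological ring, one extracts subsequences $\tF_q^{m_k}\to\tF_\infty\in\F$ and $\tV_q^{m_k}\to\tV_\infty\in\V$; Lemma \ref{l:unit} and closedness of the unit group force $\tF_\infty+\tV_\infty$ to be a unit, while $\tF_\infty\tV_\infty\in\bigcap_m q^m\ZZ_p\,\E=0$, and the splitting follows from this pair of orthogonal elements summing to a unit. You instead invoke the structure of $\ZZ_p\,\E$ as a finite commutative $\ZZ_p$-algebra, hence a finite product of complete local rings, and reduce everything to the dichotomy that each maximal ideal contains exactly one of $\tF_q$, $\tV_q$ (again via Lemma \ref{l:unit} with $m=1$ together with $\tF_q\tV_q=q$), finishing with Krull's intersection theorem factor by factor; your CRT variant is likewise sound. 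Both arguments rest entirely on Lemmas \ref{l:silly} and \ref{l:unit}, exactly as you predicted. The paper's proof is shorter and needs no structure theory beyond compactness of $\ZZ_p\,\E$; yours is more explicit about what the decomposition \emph{is} --- it identifies $\F$ and $\V$ with the sub-products of local factors on which $\tF_q$, respectively $\tV_q$, is invertible, which makes the corresponding idempotents and the behaviour of $\ZZ_p\,\E$-modules under the splitting completely transparent. One could quibble that the paper's three-line proof leaves to the reader the final step of turning the pair $(\tF_\infty,\tV_\infty)$ into the equality $\ZZ_p\,\E=\F\times\V$ (one still has to check, e.g., that $\F\cdot\V=0$ kills the cross terms), whereas your version closes that loop explicitly; so no correction is needed on your end.
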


\begin{proof} The ring $\ZZ_p\,\E$ has a natural topology as the $p$-adic completion of $\E$. Since $\ZZ_ p\,\E$ is compact, it is possible to find subsequences $\tF_q^{m_k}$, $\tV_q^{m_k}$ converging respectively to $\tF_\infty\in \F$, $\tV_\infty\in \V$.
By Lemma \ref{l:unit} and the fact that $(\ZZ_p\,\E)^{\times}$ is closed,
$$\tF_\infty+\tV_\infty=\lim_{m_k\rightarrow\infty}(\tF_q^{m_k}+\tV_q^{m_k})$$
is a unit. To complete the proof, we only need to note that $\tF_\infty\tV_\infty=0$ since $\tF_\infty \tV_\infty\in \tF_q^m\tV_q^m(\ZZ_p\,\E)=q^m\ZZ_p\,\E$ for all $m$.
\end{proof}

By Corollary \ref{c:Esplits}, any $\ZZ_p\,\E$-module $M$ decomposes as $\F M\oplus \V M$. In many cases, a better grasp of the two components can be obtained by the following lemma (and its obvious $\V$-analogue).

\begin{lemma}\label{l:decomposition} Let $W$ be a $\ZZ_p\,\E$-module and $W[q^n]$ its $q^n$-torsion submodule. Then
$$\F W[q^n]=\tF_q^n(W[q^n]).$$
In particular, if $W$ is a $p$-primary abelian group then $\F W=\bigcap_n \tF_q^nW$.
\end{lemma}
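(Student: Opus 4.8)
The plan is to prove the first identity $\F W[q^n]=\tF_q^n(W[q^n])$ by a direct computation inside the finite (or at least $q^n$-torsion) $\ZZ_p\,\E$-module $W[q^n]$, and then deduce the statement for $p$-primary $W$ by passing to the union over $n$. Recall from Corollary \ref{c:Esplits} that $\ZZ_p\,\E=\F\times\V$ as a $\ZZ_p$-algebra, so there is an idempotent $e_\F\in\ZZ_p\,\E$ with $\F=e_\F(\ZZ_p\,\E)$ and $\F W[q^n]=e_\F\cdot W[q^n]$. On the $\V$-factor $\tF_q$ acts as a topologically nilpotent operator (since $\tF_q^m\tV_q^m=[q^m]$ kills any given $q^n$-torsion element once $m\geq n$), and dually on the $\F$-factor $\tF_q$ acts invertibly; these two facts are what drive the argument.

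First I would show $\F W[q^n]\subseteq \tF_q^n(W[q^n])$. Take $x\in\F W[q^n]$; since $\tF_q$ is an automorphism of $\F$ by Lemma \ref{l:unit} (the inverse of $\tF_q|_\F$ exists because $\tF_q^m\tV_q^m=[q^m]$ shows $\tF_q$ is not a zero-divisor on $\F$, and $\F$ is a direct factor of a compact $\ZZ_p$-algebra with $\tV_q$ acting topologically nilpotently on it), there is $y\in\F W[q^n]\subseteq W[q^n]$ with $\tF_q^n y=x$. Hence $x\in\tF_q^n(W[q^n])$. For the reverse inclusion $\tF_q^n(W[q^n])\subseteq\F W[q^n]$, take any $w\in W[q^n]$ and write $w=w_\F+w_\V$ with $w_\F\in\F W$, $w_\V\in\V W$; then $\tF_q^n w=\tF_q^n w_\F+\tF_q^n w_\V$, and $\tF_q^n w_\V\in\tF_q^n\tV_q^n(\ZZ_p\,\E)\cdot W=[q^n]\cdot W=0$ on $W[q^n]$, so $\tF_q^n w=\tF_q^n w_\F\in\F W[q^n]$. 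Combining the two inclusions gives the first assertion.

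For the "in particular" clause, suppose $W$ is a $p$-primary abelian group, so $W=\bigcup_n W[q^n]$ (every element is killed by some power of $p$, hence by some power of $q$ since $q$ is a power of $p$). On one hand $\F W=\bigcup_n \F W[q^n]=\bigcup_n \tF_q^n(W[q^n])\subseteq \bigcap_n \tF_q^n W$, the last inclusion because $\tF_q^n(W[q^n])\subseteq\tF_q^n W$ and, using $\tF_q$ invertible on $\F$ again, $\F W=\F(\tF_q^k W)\subseteq\tF_q^k W$ for every $k$. Conversely, if $w\in\bigcap_n\tF_q^n W$, decompose $w=w_\F+w_\V$; for each $n$ pick $u_n\in W$ with $\tF_q^n u_n=w$, and projecting to the $\V$-component gives $\tF_q^n(u_n)_\V=w_\V$, so $w_\V\in\bigcap_n\tF_q^n(\V W)$. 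But any $w_\V$ lies in some $\V W[q^N]$, and $\tF_q^N$ annihilates $\V W[q^N]$ (same computation $\tF_q^N\tV_q^N=[q^N]$), while $w_\V\in\tF_q^N(\V W)$ forces $w_\V\in\tF_q^N(\V W[q^{N+c}])$ for suitable $c$; chasing this shows $w_\V=0$, hence $w=w_\F\in\F W$. Therefore $\F W=\bigcap_n\tF_q^n W$.

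The main obstacle, and the point to be careful about, is the second inclusion in the "in particular" clause — showing $\bigcap_n\tF_q^n W\subseteq\F W$, i.e. that the topologically nilpotent action of $\tF_q$ on the $\V$-part really forces the intersection to vanish there. The cleanest route is to reduce to torsion: since $\V W=\bigcup_N \V W[q^N]$ and on $\V W[q^N]$ one has $\tF_q^N=0$, an element of $\bigcap_n\tF_q^n(\V W)$ that happens to lie in $\V W[q^N]$ must be in the image of $\tF_q^N$ restricted to elements whose relevant component is again bounded torsion; a short induction on the torsion exponent then kills it. I would write this last step out carefully rather than wave at "nilpotence", since $W$ is not assumed finitely generated and one cannot invoke compactness of $W$ itself.
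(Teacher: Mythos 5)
Your proof of the main identity $\F W[q^n]=\tF_q^n(W[q^n])$ is correct and runs on the same engine as the paper's argument: the paper does the computation once and for all inside the ring, using the unit $\tF_q^m+\tV_q^m$ of Lemma \ref{l:unit} to show that the image of $\F$ in $\ZZ_p\,\E/q^n\ZZ_p\,\E$ is the ideal generated by $\tF_q^n$, whereas you argue directly in the module via the idempotent splitting $\ZZ_p\,\E=\F\times\V$, the surjectivity of $\tF_q$ on $\F$, and the observation that $\tF_q^n\V\subseteq\tF_q^n\tV_q^n\ZZ_p\,\E=q^n\ZZ_p\,\E$ annihilates $W[q^n]$. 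Either formulation is fine; they are essentially the same proof.

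The step you yourself flagged as the main obstacle is, however, a genuine gap, and it cannot be closed as written, because the inclusion $\bigcap_n\tF_q^nW\subseteq\F W$ is simply false for a general $p$-primary $W$. Take $W=\V\otimes_{\ZZ_p}\QQ_p/\ZZ_p$, which is nonzero since for our ordinary $A$ the eigenvalues $\beta_i=q/\alpha_i$ of $\tF_q$ are non-units and hence $\V\neq0$. Then $\F W=0$, while $\tF_q$ acts on the factor $\V$ as $q$ times a unit (because $\tV_q$ acts invertibly on $\V$ and $\tF_q\tV_q=q$), so $\tF_q^nW=q^nW=W$ for every $n$ by divisibility of $W$, and $\bigcap_n\tF_q^nW=W\neq0$. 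Concretely, your implication that $w_\V\in\tF_q^N(\V W)$ together with $q^Nw_\V=0$ forces $w_\V\in\tF_q^N(\V W[q^{N+c}])$ and then $w_\V=0$ does not hold: writing $w_\V=\tF_q^Nv$ only yields $\tF_q^N(q^Nv)=0$, which gives no bound on the torsion exponent of $v$, and in the example above every nonzero $w_\V$ survives the chase. The ``in particular'' clause is true, and is an immediate consequence of the first identity, whenever $W$ has bounded exponent: if $W=W[q^N]$ then $\tF_q^mW=\tF_q^{m-N}\F W=\F W$ for all $m\geq N$. That is the only situation in which it is used later (e.g.\ $W=\Sel_{p^n}(A/F)$ in Lemma \ref{l:s}), and it is worth noting that the paper's own proof establishes only the first identity. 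So the fix is to add the bounded-exponent hypothesis (or deduce the clause only in that form), not to push the nilpotence chase further.
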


\begin{proof}
By Lemma \ref{l:unit}, for any $m\geq n$ and any $r\geq 1$ we have
$$\tF_q^{rm}\ZZ_p\,\E+q^n\ZZ_p\,\E=(\tF_q^m+\tV_q^m)(\tF_q^{rm}\ZZ_p\,\E+q^n\ZZ_p\,\E)=\tF_q^{(r+1)m}\ZZ_p\,\E+q^n\ZZ_p\,\E\,.$$
Hence the image of $\F$ in $\ZZ_p\,\E/q^n\ZZ_p\,\E$ is $\tF_q^n(\ZZ_p\,\E/q^n\ZZ_p\,\E)$.
\end{proof}

Let $\bullet^t:\E_A\longrightarrow \E_{A^t}$ denote the anti-isomorphism sending an endomorphism to its dual, and
extend it to $\ZZ_p\E_A\longrightarrow \ZZ_p\E_{A^t}$.
Since $(\tF_{A,\,q})^t=\tV_{A^t,\,q}$ and $(\tV_{A,\,q})^t=\tF_{A^t,\,q}\,$ ( \cite[Proposition 7.34]{gm13}), we have
\begin{equation}\label{e:fatvt} (\F_A)^t=\V_{A^t}\, \text{ and } (\V_A)^t=\F_{A^t}\,\,. \end{equation}

\begin{proposition}\label{p:sfft}
If $X_p(A/L)$ is a torsion $\La$-module, then
$$\chi(\F_A X_p(A/L))= \chi(\V_{A^t} X_p(A^t/L))^\sharp\,\; \text{and}\,\; \chi(\V_A X_p(A/L))= \chi(\F_{A^t} X_p(A^t/L))^\sharp.$$
\end{proposition}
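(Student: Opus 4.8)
The plan is to deduce the two identities from the algebraic functional equation of \cite{LLTT1}, the idempotent decomposition $\ZZ_p\,\E_A=\F_A\times\V_A$ of Corollary \ref{c:Esplits}, and Lemma \ref{l:phi[]chi}. Recall the functional equation in the shape of a $\La$-module pseudo-isomorphism
$$\Psi\colon X_p(A/L)\;\lr\; X_p(A^t/L)^\sharp,$$
coming from the Cassels--Tate type pairing relating the Selmer groups of $A$ and $A^t$, the twist by $\sharp$ being exactly the phenomenon recorded in \eqref{e:twistpairing}. The extra input I would extract from \cite{LLTT1} is that $\Psi$ is functorial in the abelian variety: it intertwines the action of any $\varphi\in\ZZ_p\,\E_A$ on $X_p(A/L)$ with the action of its dual $\varphi^t\in\ZZ_p\,\E_{A^t}$ on $X_p(A^t/L)$. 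Since $X_p(A/L)$ is assumed $\La$-torsion, $\Psi$ forces $X_p(A^t/L)$ to be $\La$-torsion as well, so all characteristic ideals appearing below are defined.

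Next comes the idempotent argument. Since $A$ is constant, $\ZZ_p\,\E_A$ acts on $X_p(A/L)$ commuting with $\Gamma$; let $e\in\ZZ_p\,\E_A$ be the identity of the factor $\F_A$, so that $\F_A X_p(A/L)=eX_p(A/L)$, and note that taking the image of $e$ commutes with the operation $\sharp$, which only twists the $\Gamma$-action. By \eqref{e:fatvt} the anti-isomorphism $\bullet^t$ carries $e$ to the identity $e'\in\ZZ_p\,\E_{A^t}$ of $\V_{A^t}$; hence $\bullet^t$-equivariance of $\Psi$ yields $\Psi(eX_p(A/L))\subseteq e'X_p(A^t/L)^\sharp$, and as $\ker\Psi$ and $\coker\Psi$ are pseudo-null so are the direct summands they cut out via $e$, resp.\ $e'$. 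Therefore $\Psi$ restricts to a pseudo-isomorphism
$$\F_A X_p(A/L)\;=\;eX_p(A/L)\;\lr\; e'\bigl(X_p(A^t/L)^\sharp\bigr)\;=\;\bigl(\V_{A^t}X_p(A^t/L)\bigr)^\sharp.$$
Applying the characteristic ideal, which is invariant under pseudo-isomorphism, and Lemma \ref{l:phi[]chi} with $\alpha=\sharp$ (so that $\chi(M^\sharp)=\chi(M)^\sharp$), gives the first identity. The second then follows by running the same argument with $A$ replaced by $A^t$, using $A^{tt}=A$ and $(\V_A)^t=\F_{A^t}$ from \eqref{e:fatvt}, together with the fact that $\sharp$ is an involution.

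I expect the only real difficulty to be in securing the $\bullet^t$-equivariance of $\Psi$. This should be essentially built into the construction in \cite{LLTT1}, as the relevant duality is of Cassels--Tate type and such pairings are adjoint for dual isogenies, $\langle\varphi x,y\rangle=\langle x,\varphi^t y\rangle$; one needs to check that under the identification $\E_A\iso\E_{A^t}$ induced by $\bullet^t$ and the canonical $A^{tt}=A$, the pairing attached to $A$ goes over to the one attached to $A^t$. Granting this, everything else is a formal manipulation of orthogonal idempotents and of the operation $\sharp$.
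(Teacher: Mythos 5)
Your approach is essentially the paper's: the proposition is deduced from the algebraic functional equation of \cite{LLTT1} combined with the duality relation \eqref{e:fatvt} between $\F_A$ and $\V_{A^t}$, and your reconstruction of how the idempotents and the $\sharp$-twist interact (pseudo-isomorphisms restrict to direct summands, $\chi(M^\sharp)=\chi(M)^\sharp$ by Lemma \ref{l:phi[]chi}) is formally sound. The one substantive point you omit --- and it is the only non-formal content of the paper's own proof --- is the verification that the hypothesis of the cited theorem of \cite{LLTT1} actually holds in the present situation: the functional equation there is proved under the assumption that $\Sha_{p^\infty}(A/K_n)$ is finite for every $n$. You invoke the pseudo-isomorphism $\Psi$ unconditionally. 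In the constant ordinary case this hypothesis is satisfied because $A$ has good ordinary reduction at every place of $K$ and $\Sha_{p^\infty}(A/K_n)$ is finite by Milne's theorem \cite{Mi68}; without this check the appeal to \cite{LLTT1} is not justified. Adding that one sentence closes the gap.
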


\begin{proof} By \eqref{e:fatvt} this is just a reformulation of \cite[Theorem 5.2.6]{LLTT1} (note that since $A$ is defined over $\FF$ and ordinary, it has good ordinary reduction at every place of $K$. By \cite[Theorem 3]{Mi68} it is known that $\Sha_{p^\infty}(A/K_n)$ is finite for every $n$. Therefore, the condition of \cite[Theorem 5.2.6]{LLTT1} is satisfied).
\end{proof}

\begin{lemma}\label{l:sst}
If one of the following modules
$$X_p(A/L),\; \F_A X_p(A/L),\;  \V_A X_p(A/L),$$
and
$$X_p(A^t/L),\; \F_{A^t} X_p(A^t/L),\;  \V_{A^t} X_p(A^t/L),$$
is torsion over $\La$, then all of them are torsion over $\La$.
\end{lemma}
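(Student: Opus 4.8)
The plan is to reduce the statement to a single $\La$-rank computation. Applying Corollary \ref{c:Esplits} and Lemma \ref{l:decomposition} (and its $\V$-analogue) to the $p$-primary groups $\Sel_{p^\infty}(A/L)$ and $\Sel_{p^\infty}(A^t/L)$, and using that $\tF_q$, $\tV_q$ (hence $\F_A$, $\V_A$) are defined over $\FF\subseteq K$ and so commute with $\Gamma$, we obtain decompositions of $\La$-modules
$$X_p(A/L)=\F_A X_p(A/L)\oplus\V_A X_p(A/L),\qquad X_p(A^t/L)=\F_{A^t}X_p(A^t/L)\oplus\V_{A^t}X_p(A^t/L).$$
As rank is additive over direct sums and a finitely generated $\La$-module is torsion precisely when it has rank $0$, it suffices to prove that the four modules $\F_A X_p(A/L)$, $\V_A X_p(A/L)$, $\F_{A^t}X_p(A^t/L)$, $\V_{A^t}X_p(A^t/L)$ all have the same $\La$-rank; then the six modules in the statement are simultaneously torsion or simultaneously non-torsion.

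The first step transfers ranks between $A$ and $A^t$ via a polarization. Choose an ample invertible sheaf on $A$ defined over $\FF$ and let $\phi\colon A\to A^t$ be the associated isogeny; since $\phi$ is again defined over $\FF$, functoriality of the relative Frobenius gives $\phi\circ\tF_q=\tF_{A^t,q}\circ\phi$, and hence also $\phi\circ\tV_q=\tV_{A^t,q}\circ\phi$ (as $[q]=\tV_q\tF_q$ and $\tF_q$ is an epimorphism). By Lemma \ref{l:decomposition} (and its $\V$-analogue), the induced map $\phi_*\colon\Sel_{p^\infty}(A/L)\to\Sel_{p^\infty}(A^t/L)$ carries the $\F$-part into the $\F$-part and the $\V$-part into the $\V$-part, so the dual $\phi^\vee\colon X_p(A^t/L)\to X_p(A/L)$ respects the Frobenius decomposition. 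Now fix a complementary isogeny $\psi\colon A^t\to A$ with $\psi\circ\phi=[n]$ and $\phi\circ\psi=[n]$, $n\geq1$. Then $\Ker\phi^\vee$ is annihilated by $n$ and $\coker\phi^\vee$ is a quotient of $X_p(A/L)/nX_p(A/L)$; since both groups are pro-$p$, both are annihilated by a power of $p$, hence are $\La$-torsion ($p$ being a nonzerodivisor in $\La$). Restricting $\phi^\vee$ to the Frobenius components, the resulting maps still have $\La$-torsion kernel and cokernel, so
$$\rank_\La\F_A X_p(A/L)=\rank_\La\F_{A^t}X_p(A^t/L),\qquad\rank_\La\V_A X_p(A/L)=\rank_\La\V_{A^t}X_p(A^t/L).$$

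The second step exchanges the $\F$- and $\V$-parts, via the algebraic functional equation. Exactly as in the proof of Proposition \ref{p:sfft}, $A$ and $A^t$ have good ordinary reduction at every place, so $\Sha_{p^\infty}(A/K_n)$ and $\Sha_{p^\infty}(A^t/K_n)$ are finite for every $n$ by \cite[Theorem 3]{Mi68}, whence \cite[Theorem 5.2.6]{LLTT1} applies; combined with \eqref{e:fatvt} it yields pseudo-isomorphisms $\F_A X_p(A/L)\sim\big(\V_{A^t}X_p(A^t/L)\big)^\sharp$ and $\V_A X_p(A/L)\sim\big(\F_{A^t}X_p(A^t/L)\big)^\sharp$, hence
$$\rank_\La\F_A X_p(A/L)=\rank_\La\V_{A^t}X_p(A^t/L),\qquad\rank_\La\V_A X_p(A/L)=\rank_\La\F_{A^t}X_p(A^t/L).$$
Together with the identities from the first step, all four ranks coincide, completing the argument.

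The step demanding the most care is the first one: one must check that the polarization can be chosen over $\FF$ (so that it commutes with $\tF_q$), that $\phi_*$ really does preserve the Frobenius decomposition of the infinite-level Selmer groups — which is exactly where Lemma \ref{l:decomposition} and the $p$-primariness of $\Sel_{p^\infty}$ come in — and that passing to Pontryagin duals keeps the kernel and cokernel $\La$-torsion. One should also confirm that \cite[Theorem 5.2.6]{LLTT1} may be invoked here without assuming in advance that $X_p(A/L)$ is $\La$-torsion; this is the case, its only hypothesis being the finiteness of $\Sha_{p^\infty}$ at each finite layer.
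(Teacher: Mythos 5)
Your first step --- transferring $\La$-ranks between $A$ and $A^t$ within the Frobenius (resp.\ Verschiebung) components via a polarization defined over $\FF$ --- is sound and is essentially the opening move of the paper's own proof, which runs a chain of isogenies $A\to A^t\to A\to A^t$ to move annihilators back and forth. The gap is in your second step. You invoke \cite[Theorem 5.2.6]{LLTT1} to obtain pseudo-isomorphisms $\F_A X_p(A/L)\sim\big(\V_{A^t}X_p(A^t/L)\big)^\sharp$ with no torsion hypothesis, asserting that the only hypothesis of that theorem is finiteness of $\Sha_{p^\infty}(A/K_n)$ at each layer. But the paper's own packaging of that theorem, Proposition \ref{p:sfft}, carries the explicit hypothesis that $X_p(A/L)$ is $\La$-torsion (the parenthetical in its proof only discharges the Sha condition, not the torsion one), and the entire role of Lemma \ref{l:sst} in the paper's architecture is to supply that torsion hypothesis before Proposition \ref{p:sfft} is applied (see the proof of Proposition \ref{p:xffsharp}). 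As written, your argument is therefore circular relative to the form in which the functional equation is available here: you use the $\F$--$\V$ exchange to prove the very statement that licenses the $\F$--$\V$ exchange, and the second step fails precisely in the case that matters, namely when $X_p(A/L)$ is not yet known to be torsion. The claim that the cited theorem holds as a pseudo-isomorphism for non-torsion modules is an unverified assertion about an external reference, and the fact that the authors wrote a separate hands-on proof of this lemma is strong evidence they could not rely on it.

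The paper closes this gap by proving the $\F$--$\V$ exchange directly at finite levels, with no torsion input: assuming $\cH_A X_p(A/L)$ is killed by some nonzero $\xi$, it produces $\delta\in\La$ killing $\coh^1(\Gamma^{(n)},A_{p^\infty}(L))$, so that $\delta\xi^\sharp$ kills $\cH_A\Sel_{p^\infty}(A/K_n)$ and hence both $\cH_A\Sha_{p^\infty}(A/K_n)$ and $\cH_A(\QQ_p/\ZZ_p\otimes A(K_n))$; the Cassels--Tate and N\'eron--Tate pairings, combined with \eqref{e:twistpairing} and \eqref{e:fatvt}, then transport the annihilator to the $\K_{A^t}$-parts of $\Sha_{p^\infty}(A^t/K_n)$ and of $\QQ_p/\ZZ_p\otimes A^t(K_n)$ (the latter being divisible and finite, hence trivial), so that $(\delta^\sharp\xi)^2$ kills $\K_{A^t}\Sel_{p^\infty}(A^t/L)$. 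To salvage your rank-counting strategy you would need to replace the appeal to \cite{LLTT1} by an argument of this kind, or else genuinely verify that the duality theorem you cite holds without the torsion hypothesis.
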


\begin{proof}
Applying any sequence of isogenies
$$A\longrightarrow A^t \longrightarrow A \longrightarrow A^t,$$
so that the composition of the last (resp. first) two arrows equals some $[n]$ (resp. $[n']$), we see that if any item of the upper (resp. lower) row is annihilated by a non-zero $\xi\in\La$, then the corresponding item in the lower (resp. upper) row is annihilated by $n'\xi$ (resp. $n\xi$). In view of the decomposition
\begin{equation} \label{e:xfv} X_p(A/L)=\F_A X_p(A/L)\oplus \V_A X_p(A/L)\,, \end{equation}
we only need to show that $\cH_AX_p(A/L)$ is torsion if and only if so is $\K_{A^t}X_p(A^t/L)$, with $\{\cH,\K\}=\{\F,\V\}$.

Suppose $\cH_AX_p(A/L)$ is annihilated by $\xi\in\La$. Then $\xi^\sharp\cdot \cH_A\Sel_{p^\infty}(A/L)=0$. Let $g(T)$ denote the characteristic polynomial of the action on the Tate module of $A(L)[p^\infty]$ by a non-zero element $\gamma\in \Gamma$. Then
$g(\gamma)\cdot A(L)[p^\infty]$ is a finite group of order, say, $p^m$. Then $\delta:=p^mg(\gamma)$ annihilates $A(L)[p^\infty]$, and consequently $\delta\cdot \coh^1(\Gamma^{(n)},A_{p^\infty}(L))=0$, for every $n$.
Because
$$\xi^\sharp\cdot \cH_A\Sel_{p^\infty}(A/K_n)\subset \Ker \big(\Sel_{p^{\infty}}(A/K_n)\lr\Sel_{p^\infty}(A/L)\big)= \coh^1(\Gamma^{(n)},A_{p^\infty}(L)),$$
we have $\delta\xi^\sharp\cdot \cH_A\Sel_{p^\infty}(A/K_n)=0$. This implies that
$\delta\xi^\sharp$ annihilates both $\cH_A\Sha_{p^\infty}(A/K_n)$ and  $\cH_A(\QQ_p/\ZZ_p\otimes A(K_n))$,
by the exact sequence
$$0\longrightarrow \QQ_p/\ZZ_p\otimes A(K_n)  \longrightarrow  \Sel_{p^\infty}(A/K_n) \longrightarrow \Sha_{p^\infty}(A/K_n) \longrightarrow 0.$$
By the Cassels-Tate pairing the identities \eqref{e:twistpairing} and \eqref{e:fatvt} imply $\delta^\sharp\xi\cdot\K_{A^t}\Sha_{p^\infty}(A^t/K_n)=0$.
It remains to show that $\delta^\sharp\xi$ annihilates $\K_{A^t}(\QQ_p/\ZZ_p\otimes A^t(K_n))$,
so that
$(\delta^\sharp\xi)^2$ annihilates $\K_{A^t}\Sel_{p^\infty}(A^t/K_n)$ for all $n$. Hence, it
also annihilates $\K_{A^t}\Sel_{p^\infty}(A^t/L)$. Then by duality, $(\delta\xi^\sharp)^2\cdot \K_{A^t}X_{p^{\infty}}(A^t/L)=0$, as desired.

But since the N$\acute{\text{e}}$ron-Tate height induces a $\Gamma$-equivariant non-degenerate pairing (see \cite[Lemma 5.2.2]{LLTT1} and its proof)
$$\QQ_p\otimes A(K_n)\times \QQ_p\otimes A^t(K_n)\longrightarrow \QQ_p,$$
by \eqref{e:twistpairing} and \eqref{e:fatvt}, the $p$-divisible group $\delta^\sharp\xi\cdot\K_{A^t}(\QQ_p/\ZZ_p\otimes A^t(K_n))$ must be finite,
whence trivial.
\end{proof}

\begin{proposition}\label{p:xffsharp}
If $A$ is an ordinary abelian variety over $\FF$, then
$$\chi(X_p(A/L))=\chi(\F X_p(A/L))\cdot \chi(\F_{A^t} X_p(A^t/L))^\sharp.$$
\end{proposition}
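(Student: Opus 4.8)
The plan is to split according to whether $X_p(A/L)$ is a torsion $\La$-module, handle the non-torsion case by a vanishing argument, and in the torsion case simply combine the Frobenius decomposition \eqref{e:xfv} with Proposition \ref{p:sfft}.

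First I would dispose of the case where $X_p(A/L)$ is not $\La$-torsion. Then $\chi(X_p(A/L))=0$ by definition. On the other hand, since $X_p(A/L)$ appears in the list of Lemma \ref{l:sst}, non-torsion-ness of $X_p(A/L)$ forces $\F_A X_p(A/L)$ to be non-torsion as well, so $\chi(\F X_p(A/L))=0$ and the right-hand side vanishes too. Hence the asserted equality reads $0=0$ in this case.

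Next, assume $X_p(A/L)$ is torsion over $\La$. By Lemma \ref{l:sst} all six modules in its list are then torsion; in particular $\F_A X_p(A/L)$, $\V_A X_p(A/L)$ and $\F_{A^t}X_p(A^t/L)$ are finitely generated torsion $\La$-modules. Since the characteristic ideal is multiplicative on finite direct sums of finitely generated torsion modules (immediate from the structure theorem and the definition of $\chi$ recalled in \S\ref{sss:krul}), the decomposition \eqref{e:xfv} gives
$$\chi(X_p(A/L))=\chi(\F_A X_p(A/L))\cdot\chi(\V_A X_p(A/L)).$$
Now Proposition \ref{p:sfft}, whose torsion hypothesis is exactly what we have just verified, yields $\chi(\V_A X_p(A/L))=\chi(\F_{A^t}X_p(A^t/L))^\sharp$. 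Substituting this into the displayed identity gives the claim.

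I do not anticipate a genuine obstacle here: the content is entirely bookkeeping, and the only point needing a moment of care is the non-torsion case, where one must pass through Lemma \ref{l:sst} to transfer non-torsion-ness from $X_p(A/L)$ to its Frobenius part rather than arguing directly.
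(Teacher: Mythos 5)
Your proof is correct and follows essentially the same route as the paper: the paper's own argument is the one-line combination of Lemma \ref{l:sst}, Proposition \ref{p:sfft}, the decomposition \eqref{e:xfv}, and multiplicativity of $\chi$, with the degenerate case reducing to $0=0$. The only cosmetic difference is that the paper splits on whether $\F X_p(A/L)$ is torsion while you split on whether $X_p(A/L)$ is; by Lemma \ref{l:sst} these conditions are equivalent, so the two case analyses coincide.
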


\begin{proof} If $\F X_p(A/L)$ is torsion, then the equality follows from Lemma \ref{l:sst}, Proposition \ref{p:sfft} and \eqref{e:xfv}; otherwise, we get $0=0$. \end{proof}

\end{subsection}


\begin{subsection}{Selmer groups and class groups} \label{su:sgcg}

In this section we reduce the computation of $\F X_p(A/L)$ to that of Galois cohomology groups.

Lemma \ref{l:decomposition} and Corollary \ref{c:image1} imply that $\F\cdot\Sel_{p^n}(A/F)$ is contained in $\coh^1_{\mathrm{fl}}(F,A_{p^n}^{\mathrm{\acute et}})$. The latter can be viewed as Galois cohomology, by \cite[III, Theorem 3.9]{mil80}: since $A_{p^n}^{\mathrm{\acute et}}$ is \'etale, in the following we will not distinguish between the sheaf $A_{p^n}^{\mathrm{\acute et}}$ and the usual $p^n$-torsion subgroup $A_{p^n}^{\mathrm{\acute et}}(\bar K)=A(\bar K)[p^n]$. Also, when considering Galois cohomology, we shall shorten $A(\bar K)[p^n]$ to $A[p^n]$ and $\Gal(\bar F/F)$ to $G_F$. In particular, we shall write
$$A[p^\infty]:=A(\bar K)[p^\infty]=\bigcup A[p^n]\,.$$

Let $\tilde K_n$ denote the compositum $\FF(A[p^n])K$: it is an abelian extension over $K$, such that, for $n\geq 1$, $\Gal(\tilde K_{n+1}/\tilde K_n)$ is an abelian $p$-group annihilated by $p$.
Suppose $\tilde K_n\subset F$: then we can identify $\coh_{\mathrm{Gal}}^1(F,A[p^n])$ with $\Hom(G_F, A[p^n])$. Recall that $\varphi\in \Hom(G_F, A[p^n])$ is said to be unramified at a place $v$ if $\Ker(\varphi)$ contains the inertia group at $v$.
We use \eqref{e:dec} to identify
$\coh_{\mathrm{Gal}}^1(F,A[p^n])=\coh_{\mathrm{\acute et}}^1(F,A_{p^n}^{\mathrm{\acute et}})=\coh_{\mathrm{fl}}^1(F,A_{p^n}^{\mathrm{\acute et}})$
with a subgroup of $\coh_{\mathrm{fl}}^1(F,A_{p^n})$.

\begin{lemma} \label{l:unramified}
Suppose $F/K$ is a finite extension with $\tilde K_n\subset F$. Then an element $\varphi\in \Hom(G_F, A[p^n])=\coh_{\mathrm{Gal}}^1(F,A[p^n])$ is contained in $\Sel_{p^n}(A/F)$ if and only if $\varphi$ is unramified everywhere.
\end{lemma}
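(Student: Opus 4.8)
The plan is to unwind the definition of the Selmer group in terms of \'etale cohomology and to compare local conditions place by place, using that over the completion $F_v$ the relevant local cohomology of $A$ still splits off an \'etale part. First I would recall that, after replacing $A_{p^n}$ by its maximal \'etale quotient $A_{p^n}^{\mathrm{\acute et}}$, the class $\varphi$ lies in $\coh^1_{\mathrm{Gal}}(F, A[p^n]) = \Hom(G_F, A[p^n])$, and that $\Sel_{p^n}(A/F)$ consists of those classes whose image in $\coh^1_{\mathrm{fl}}(F,A)$ dies in $\coh^1_{\mathrm{fl}}(F_v,A)$ for every place $v$. So the statement reduces to a purely local assertion: for each $v$, the image of $\varphi$ in $\coh^1_{\mathrm{fl}}(F_v,A)$ vanishes if and only if $\varphi|_{G_{F_v}}$ is unramified (i.e.\ kills the inertia subgroup $I_v$).

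The key local input is that $A$ has good reduction everywhere (it is constant, so in particular it extends to an abelian scheme $\A$ over $\cO_{F_v}$). Hence there is a Kummer-type description: $A(F_v)/p^n A(F_v) \hookrightarrow \coh^1_{\mathrm{fl}}(F_v,A_{p^n})$, and the ``unramified'' part of $\coh^1_{\mathrm{fl}}(F_v,A_{p^n}^{\mathrm{\acute et}})$ — classes trivial on inertia — corresponds exactly to the image of $\coh^1$ of the residue field, which under the N\'eron model identification is the image of $\A(\cO_{F_v})/p^n$. The boundary map $\coh^1_{\mathrm{fl}}(F_v,A_{p^n}) \to \coh^1_{\mathrm{fl}}(F_v,A)[p^n]$ has kernel exactly $A(F_v)/p^n A(F_v)$; so one needs to check that the \'etale class $\varphi|_{G_{F_v}}$, viewed inside $\coh^1_{\mathrm{fl}}(F_v,A_{p^n})$ via \eqref{e:dec}, lands in this kernel precisely when it is unramified. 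In one direction, an unramified class comes from $\coh^1$ of the finite residue field with coefficients in the special fibre, and by good reduction (and Lang's theorem for the connected part) such a class is in the image of $\A(\cO_{F_v})/p^n = A(F_v)/p^n$, hence maps to $0$ in $\coh^1_{\mathrm{fl}}(F_v,A)$. In the other direction, if $\varphi|_{G_{F_v}}$ maps to $0$ in $\coh^1_{\mathrm{fl}}(F_v,A)$ then it lifts to a point of $A(F_v)/p^n$; reduction modulo $v$ together with the fact that $A(F_v)\to A(k_v)$ is surjective on $p$-parts (smoothness of the N\'eron model, the kernel being a pro-$\ell$ group for $\ell\ne p$ times a $p$-divisible formal group) forces the associated cocycle to be unramified.

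I would organize this as: (1) reduce to the local statement using the definition of $\Sel_{p^n}$ and Lemma \ref{l:decomposition}, Corollary \ref{c:image1} to know $\varphi$ is \'etale; (2) at each $v$, use the connected-\'etale (equivalently good-reduction N\'eron model) picture to identify the unramified classes in $\coh^1_{\mathrm{fl}}(F_v,A_{p^n}^{\mathrm{\acute et}})$ with the image of $A(F_v)/p^n$ under the Kummer map; (3) conclude via the exact sequence $0\to A(F_v)/p^nA(F_v)\to \coh^1_{\mathrm{fl}}(F_v,A_{p^n})\to \coh^1_{\mathrm{fl}}(F_v,A)[p^n]\to 0$. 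The subtle point — and the one I expect to be the main obstacle — is the place $v$ above the residue characteristic $p$: there the local cohomology is genuinely flat (not \'etale) cohomology, so one must be careful that the identification of the ``unramified'' classes inside $\coh^1_{\mathrm{fl}}(F_v,A_{p^n})$ (via \eqref{e:dec}) with the image of $A(F_v)/p^n$ still holds; this is where ordinariness of $A$ and the explicit structure of the $p$-divisible group $A[p^\infty]$ over the complete local ring enter, ensuring that the local Kummer image of $A(F_v)$ coincides with the unramified \'etale classes. Once that is settled the global statement follows immediately by taking the intersection over all $v$.
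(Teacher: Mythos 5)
Your global reduction (the Selmer condition is imposed place by place, so it suffices to show that for each $v$ the image of $\varphi|_{G_{F_v}}$ in $\coh^1_{\mathrm{fl}}(F_v,A)$ vanishes if and only if $\varphi$ kills inertia at $v$) agrees with the paper. The local step is where your argument has a genuine gap. You route everything through the Kummer sequence and the claim that, inside $\coh^1_{\mathrm{fl}}(F_v,A_{p^n})$, the unramified \'etale classes are exactly the \'etale classes lying in the image of $A(F_v)/p^nA(F_v)$. The easy inclusion (unramified $\Rightarrow$ dies in $\coh^1(F_v,A)$) is fine and amounts to $\coh^1(F_v^{unr}/F_v,A)=0$, i.e.\ Lang's theorem plus Hensel, which is also what the paper uses. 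But the converse --- dies locally $\Rightarrow$ unramified --- is the actual content of the lemma, and your justification does not work in characteristic $p$. A class in the Kummer image is of the form $\sigma\mapsto\sigma Q-Q$ only for $Q$ a point of the torsor over a flat cover, not over $F_v^{\mathrm{sep}}$: since $A_{p^n}$ has a nontrivial connected part, $[p^n]$ is not surjective on $A(F_v^{\mathrm{sep}})$, so there is no honest Galois cocycle to test on inertia. Moreover the structural facts you invoke are false here: every place of $F$ has residue characteristic $p$ (this is not one exceptional place), and in equal characteristic $p$ the kernel of reduction $\widehat A(\fm_v)$ is a torsion-free pro-$p$ group which is \emph{not} $p$-divisible (already for $\widehat\GG_m$, $(1+x)^p=1+x^p$ is not onto $1+\fm_v$), so the assertion that $A(F_v)\to A(k_v)$ is surjective on $p$-parts with kernel ``pro-$\ell$ times a $p$-divisible formal group'' is number-field intuition that does not transfer. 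You flag the hard case yourself and defer it to ``ordinariness and the structure of the $p$-divisible group,'' but that deferred step is essentially the whole lemma.

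The paper avoids the Kummer sequence altogether. Since $A$ is smooth, $\coh^1_{\mathrm{fl}}(F_v,A)=\coh^1_{\mathrm{Gal}}(F_v,A)$, and the relevant map $\coh^1_{\mathrm{Gal}}(F_v,A[p^n])\to\coh^1_{\mathrm{Gal}}(F_v,A)$ is induced by $A[p^n]\hookrightarrow A(\bar F_v)$. Comparing the inflation--restriction sequences for $F_v^{unr}/F_v$ with coefficients $A[p^n]$ and $A$, the kernel of the middle vertical map is exactly the unramified subgroup because (i) $\coh^1(F_v^{unr}/F_v,A)=0$ (Lang plus Hensel) and (ii) $\coh^1(F_v^{unr},A[p^n])\to\coh^1(F_v^{unr},A)$ is injective: a coboundary $\sigma\mapsto\sigma Q-Q$ on inertia reduces to $0$ in the residue field, and since $A$ is constant the reduction map is an isomorphism on $A[p^n]$. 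That injectivity on inertia is the constancy input that replaces your unproved identification; if you wish to keep the Kummer-theoretic framing you would still need this argument (or a Frobenius--Verschiebung factorization of $[p^n]$ as in \S\ref{su:fm}--\ref{su:fa}) to control the connected part.
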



\begin{proof} For $v$ a place of $F$, let $F_v^{unr}$ denote an unramified closure of the completion $F_v$.
Consider the diagram (with exact lines)
\begin{equation} \label{e:diagrunr} \begin{CD}
0 @>>> \coh_{\mathrm{Gal}}^1(F_v^{unr}/F_v,A[p^n]) @>>> \coh_{\mathrm{Gal}}^1(F_v,A[p^n]) @>>> \coh_{\mathrm{Gal}}^1(F_v^{unr},A[p^n]) \\
&& @VVV @VVV @VVV \\
0 @>>> \coh_{\mathrm{Gal}}^1(F_v^{unr}/F_v,A) @>>> \coh_{\mathrm{Gal}}^1(F_v,A) @>>> \coh_{\mathrm{Gal}}^1(F_v^{unr},A)\,.
\end{CD}\end{equation}
The last vertical map is injective: for, if $\varphi$ is contained in the kernel, then there exists a point $Q\in A(\bar F_v)$ such that $\varphi(\sigma)=\sigma Q-Q $ for all $\sigma$ in the inertia subgroup. As the latter acts trivially on the residue field, the reduction ${\overline{ \varphi(\sigma)}}$ is zero. But since $A={\bar A}$ (the reduction of $A$ at $v$),
the reduction map induces an isomorphism $A[p^n]\simeq {\bar A}[p^n]$.
Then we get $\varphi(\sigma)=0$, and hence $\varphi=0$. Also, we have $\coh_{\mathrm{Gal}}^1(F_v^{unr}/F_v,A)=0$ by \cite[I, Proposition 3.8]{mil86} (a consequence of Lang's theorem and Hensel's lemma). Therefore, the kernel of the central vertical map in \eqref{e:diagrunr} is exactly $\coh_{\mathrm{Gal}}^1(F_v^{unr}/F_v,A[p^n])$.

Now it is enough to observe that by definition a cohomology class in $\coh_{\mathrm{Gal}}^1(F,A[p^n])$ is unramified at $v$
if its restriction to $\coh_{\mathrm{Gal}}^1(F_v,A[p^n])$ is contained in $\coh_{\mathrm{Gal}}^1(F_v^{unr}/F_v,A[p^n])$.
\end{proof}

Let $\fW_F$ denote the $p$-completion of the divisor class group of $F$. Note that the class group of $F$ consists of degree zero divisor classes: let $\fC_F$ be its Sylow $p$-subgroup. Then the degree map induces the exact sequence
\begin{equation}\label{e:weil} \begin{CD} 0 @>>> \fC_F @>>> \fW_F @>\deg>> \ZZ_p @>>> 0\,. \end{CD} \end{equation}

\begin{lemma}\label{l:s}
Suppose $F/K$ is a finite extension with $\tilde K_n\subset F$. Then
$$\F\cdot\Sel_{p^n}(A/F)=\Hom(\fW_F , A[p^n]).$$
\end{lemma}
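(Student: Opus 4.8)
The goal is to identify $\F\cdot\Sel_{p^n}(A/F)$ inside $\coh^1_{\mathrm{Gal}}(F,A[p^n])=\Hom(G_F,A[p^n])$ (using $\tilde K_n\subset F$, so that $G_F$ acts trivially on $A[p^n]$) with the image of $\Hom(\fW_F,A[p^n])$. By Lemma \ref{l:unramified}, the Selmer condition cuts out exactly the homomorphisms $\varphi\colon G_F\to A[p^n]$ that are unramified at every place, i.e. those factoring through the Galois group of the maximal abelian unramified exponent-$p^n$ extension of $F$. The plan is first to reinterpret $\F\cdot\Sel_{p^n}(A/F)$ as the subgroup of such everywhere-unramified homomorphisms that additionally lie in the Frobenius part, and then to match this, via class field theory, with $\Hom(\fW_F,A[p^n])$; the degree-zero subgroup $\fC_F$ of $\fW_F$ corresponds under the Artin map to the maximal unramified abelian extension in which every place of $F$ splits completely enough, and the $\ZZ_p$-quotient accounts for the unramified $\ZZ_p$-extension coming from the constant field.

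First I would set up the class field theory dictionary. Since $A$ is constant, $\FF(A[p^n])$ is a finite extension of $\FF$, so $\tilde K_n/K$ is contained in the constant field extension; thus $G_F$ acts trivially on $A[p^n]$ and $\Hom(G_F,A[p^n])=\Hom(G_F^{\mathrm{ab}}/p^n,A[p^n])$. An everywhere-unramified homomorphism factors through $\Gal(H_F^{(n)}/F)$ where $H_F^{(n)}$ is the maximal abelian unramified extension of exponent dividing $p^n$. By global class field theory for the function field $F$ (whose idele class group has a $\ZZ$-quotient given by the degree map), $\Gal(H_F^{(n)}/F)$ is the quotient of $\mathrm{Pic}(F)\otimes\ZZ/p^n \oplus \ZZ/p^n$ that one reads off from the exact sequence \eqref{e:weil}: precisely, the everywhere-unramified class group with its degree map gives $\Gal(H_F^{(n)}/F)\cong \fW_F/p^n\fW_F$ (the unramified extensions correspond to the full divisor class group $\fW_F$, the degree-$\ZZ_p$ piece being the part pulled back from $\FF$). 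Hence $\{\text{everywhere-unramified }\varphi\}=\Hom(\fW_F/p^n,A[p^n])=\Hom(\fW_F,A[p^n])$, using that $A[p^n]$ is killed by $p^n$.

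It then remains to check that the Frobenius idempotent acts as the identity on this image, i.e. that $\F\cdot\Hom(\fW_F,A[p^n])=\Hom(\fW_F,A[p^n])$ inside $\Hom(G_F,A[p^n])$. Here I would invoke Lemma \ref{l:decomposition}: for the $\ZZ_p\E$-module $W=\coh^1_{\mathrm{fl}}(F,A_{p^n})$ (which is $p^n$-torsion), $\F W=\tF_q^n(W)$, and the $\F$-part of $\coh^1_{\mathrm{Gal}}(F,A[p^n])=\Hom(G_F,A[p^n])$ is the image of $\tF_q^n$, equivalently $\Hom(G_F,\F A[p^n])=\Hom(G_F,\tF_q^n A[p^n])$. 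Since $A$ is ordinary, $\tF_q^n A[p^n]=A^{\mathrm{\acute et}}_{p^n}(\bar K)=A[p^n]$ is exactly the étale part, so $\F\cdot\coh^1_{\mathrm{Gal}}(F,A[p^n])$ is all of $\Hom(G_F,A[p^n])$ — and the Selmer condition, by Lemma \ref{l:unramified}, cuts this down to the everywhere-unramified classes, which by the previous paragraph is $\Hom(\fW_F,A[p^n])$. Combining: $\F\cdot\Sel_{p^n}(A/F)=\F\cdot\{\text{unramified }\varphi\}=\{\text{unramified }\varphi\}=\Hom(\fW_F,A[p^n])$.

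The main obstacle I anticipate is the bookkeeping in the class field theory step: making precise that the relevant unramified abelian extensions are parametrized by the full $p$-completed divisor class group $\fW_F$ (degree map included) rather than just by $\fC_F$, and that the identification is compatible with the $\Gamma$- and $\E$-actions used later in the paper. One must be careful that "unramified everywhere" in Lemma \ref{l:unramified} really does correspond to the Artin map on $\fW_F$ and not, say, to a ray class group with some modulus, and that there are no subtleties at the places in $S$ or at places where the reduction behaves specially — but since $A$ is constant with good (ordinary) reduction everywhere, no such modulus appears, so this should go through cleanly. The $\F$-part computation is then essentially formal given Lemma \ref{l:decomposition} and the ordinariness of $A$.
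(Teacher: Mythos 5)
Your proposal follows essentially the same route as the paper: Lemma \ref{l:unramified} to identify the Selmer condition with everywhere-unramifiedness, class field theory to identify unramified homomorphisms with $\Hom(\fW_F,A[p^n])$, and the invertibility of $\tF_q$ on $A[p^n]$ (via Lemma \ref{l:decomposition}) to see that $\F$ acts as the identity on that subgroup. One step, however, is asserted without support: the equality $\F\cdot\Sel_{p^n}(A/F)=\F\cdot\{\text{unramified }\varphi\}$ needs the inclusion $\F\cdot\Sel_{p^n}(A/F)\subseteq\Hom(G_F,A[p^n])$, i.e.\ that applying $\F$ (equivalently $\tF_q^m$ for $m\gg0$) to an arbitrary Selmer class --- which lives in $\coh^1_{\mathrm{fl}}(F,A_{p^n})$ and may have a nontrivial component in $\coh^1_{\mathrm{fl}}(F,(A_{p^n})_0)$ --- lands in the \'etale (Galois) part, so that Lemma \ref{l:unramified} can be applied to it. Your second paragraph only computes the $\F$-part of the Galois cohomology subgroup, not of the full flat cohomology, so this inclusion is left unjustified; it is exactly the content of Corollary \ref{c:image1} (or, equivalently, the splitting \eqref{e:dec} together with the vanishing of $\tF_q^m$ on the connected component), which the paper cites at precisely this point and which you should too. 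With that citation added, your argument closes and coincides with the paper's.
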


\begin{proof} Let $\fV$ denote the group of homomorphisms $\Hom(\fW_F, A[p^n])$. Class field theory identifies $\fW_F$ with $\Gal(F^{unr,p}/F)$, where $F^{unr,p}$ is the maximal everywhere unramified abelian pro-$p$-extension of $F$. Thus, since $\tF_q^m$ induces an isomorphism on $A[p^n]$, Lemma \ref{l:unramified}, Lemma \ref{l:decomposition} and Corollary \ref{c:image1} imply, for $m\gg 0$,
$$\fV={\tF_q^m}_*(\fV)\subset {\tF_q^m}_*(\Sel_{p^n}(A/F))= \F\cdot\Sel_{p^n}(A/F)\subset\fV.$$
\end{proof}

\begin{subsubsection}{The extensions $L_{ar}$ and $\tilde L$} \label{ss:LL} Let $\FF(A[p^{\infty}])\subset\bar\FF$ be the field of definition of $A[p^{\infty}]$. We have $\Gal(\FF(A[p^{\infty}])/\FF)\simeq\ZZ_p\times H$ for some finite cyclic group $H$ of order prime to $p$. Let $L_{ar}$ denote the compositum $LK_{\infty}^{(p)}$, with $\Gamma_{ar}:=\Gal(L_{ar}/K)\simeq\ZZ_p^{d'}$, $d'\geq d$, and let
$${\tilde L}:=L\FF(A[p^{\infty}]),$$
with ${\tilde \Gamma}:=\Gal({\tilde L}/K)$. By a slight abuse of notation we can write
\begin{equation}\label{e:prod2}
{\tilde \Gamma}= \Gamma_{ar} \times H \,.
\end{equation}
By the main theorem of \cite{tan10a}, $X_p(A/{\tilde L})$ is a finitely generated $\Lambda(\Gamma_{ar})$-module, whence so is $\F X_p(A/\tilde L)$. Define
$$\fW_{\tilde L}:=\varprojlim_F\fW_F.$$

\begin{proposition} \label{p:selclassg} We have a canonical isomorphism of $\Gal({\tilde L}/K)$-modules:
$$\F X_p(A/\tilde L)\simeq \fW_{\tilde L}\otimes_{\ZZ_p}A[p^{\infty}]^\vee. $$
\end{proposition}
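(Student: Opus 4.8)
The plan is to pass to the finite levels, apply Lemma~\ref{l:s} with $F$ running through the finite subextensions of $\tilde L/K$, and take an inverse limit. The key point is that for $F$ large enough (so that $\tilde K_n\subset F$) we have canonical identifications $\F\cdot\Sel_{p^n}(A/F)=\Hom(\fW_F,A[p^n])$ from Lemma~\ref{l:s}, and these are compatible with the transition maps in a way that allows passage to the limit over both $n$ and $F$.

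First I would fix $n$ and let $F$ vary over the finite intermediate extensions of $\tilde L/K$ containing $\tilde K_n$. Lemma~\ref{l:s} gives $\F\cdot\Sel_{p^n}(A/F)=\Hom(\fW_F,A[p^n])$. Now I take the direct limit over $F$: since $\F$ is an idempotent (a direct factor of the action of $\ZZ_p\E$, by Corollary~\ref{c:Esplits}) it commutes with $\varinjlim$, so $\F\cdot\Sel_{p^n}(A/\tilde L)=\varinjlim_F\Hom(\fW_F,A[p^n])$. The transition maps for $\Sel_{p^n}$ are restriction (which on the class-group side is the transfer/norm-dual map coming from $\fW_{F'}\to\fW_F$ — one should check this via the class field theory identification $\fW_F\cong\Gal(F^{unr,p}/F)$ used in the proof of Lemma~\ref{l:s}, compatibility being functoriality of restriction in Galois cohomology). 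Hence $\F\cdot\Sel_{p^n}(A/\tilde L)=\Hom(\varprojlim_F\fW_F,A[p^n])=\Hom(\fW_{\tilde L},A[p^n])$, where I use that $A[p^n]$ is finite so $\Hom(-,A[p^n])$ turns the inverse limit $\fW_{\tilde L}=\varprojlim_F\fW_F$ into a direct limit. Next I take the direct limit over $n$: since $A[p^\infty]=\bigcup_n A[p^n]$ and $\fW_{\tilde L}$ is a compact (profinite) $\ZZ_p$-module, $\varinjlim_n\Hom(\fW_{\tilde L},A[p^n])=\Hom(\fW_{\tilde L},A[p^\infty])$. This gives $\F\cdot\Sel_{p^\infty}(A/\tilde L)\cong\Hom(\fW_{\tilde L},A[p^\infty])$ canonically and $\Gal(\tilde L/K)$-equivariantly.

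Finally I dualize. Pontryagin duality turns $\F X_p(A/\tilde L)=(\F\cdot\Sel_{p^\infty}(A/\tilde L))^\vee$ into $\Hom(\fW_{\tilde L},A[p^\infty])^\vee$. Since $A[p^\infty]\cong(\QQ_p/\ZZ_p)^{r}$ (as $A$ is ordinary, $r=\dim A$), the discrete group $\Hom(\fW_{\tilde L},A[p^\infty])=\Hom(\fW_{\tilde L},\QQ_p/\ZZ_p)\otimes_{\ZZ_p}A[p^\infty]^{\mathrm{sep?}}$... more cleanly: $\Hom(\fW_{\tilde L},A[p^\infty])\cong\fW_{\tilde L}^\vee\otimes_{\ZZ_p}?$ needs care. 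The clean statement is $\Hom(\fW_{\tilde L},A[p^\infty])^\vee\cong\fW_{\tilde L}\otimes_{\ZZ_p}A[p^\infty]^\vee$: indeed for a profinite $\ZZ_p$-module $P$ and a cofinitely-generated discrete one $D$ one has $\Hom_{\ZZ_p}(P,D)^\vee\cong P\otimes_{\ZZ_p}D^\vee$ (check on $P=\ZZ_p$, $P$ finite, and pass to limits; all modules here are built from such pieces). Tracking the $\Gal(\tilde L/K)$-action through duality and tensor gives exactly the diagonal action, so the isomorphism is $\Gal(\tilde L/K)$-equivariant.

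The main obstacle I expect is not any single deep input but the bookkeeping of \emph{compatibility and equivariance} under the two limits: verifying that the isomorphisms of Lemma~\ref{l:s} at different finite levels $F$ commute with the natural restriction maps on Selmer groups and with the norm maps $\fW_{F'}\to\fW_F$, so that the direct limit over $F$ is legitimate, and then that the $\Gal(\tilde L/K)$-action is carried correctly through $\varprojlim_F$, Pontryagin duality, and the tensor identity $\Hom(P,D)^\vee\cong P\otimes D^\vee$. A secondary technical point is the interchange of limits: one must be slightly careful that $\varinjlim_n\varinjlim_F=\varinjlim_{(F,n)}$ and that finiteness of each $A[p^n]$ together with profiniteness of $\fW_F$ makes all the $\Hom$/limit manipulations exact. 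None of these is hard, but writing it so the canonicity is manifest is where the work lies.
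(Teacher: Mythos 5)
Your proposal is correct and follows essentially the same route as the paper: identify $\F\cdot\Sel_{p^n}$ at finite levels with $\Hom(\fW_F,A[p^n])$ via Lemma \ref{l:s}, pass to limits over $F$ and $n$ using compactness of $\fW_{\tilde L}$ and discreteness of $A[p^\infty]$, and then dualize, using that $A[p^\infty]^\vee$ is a finitely generated free $\ZZ_p$-module to get $\Hom_{cont}(\fW_{\tilde L},A[p^\infty])^\vee\simeq\fW_{\tilde L}\otimes_{\ZZ_p}A[p^\infty]^\vee$. The paper simply writes down the explicit canonical map $w\otimes Q^*\mapsto(\varphi\mapsto Q^*(\varphi(w)))$ for the last step, which makes the canonicity you were worried about manifest.
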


\begin{proof} Since $\fW_{\tilde L}$ is compact and $A[p^\infty]$ discrete, we have
$$\Hom_{cont}(\fW_{\tilde L} , A[p^n])=\varinjlim_F\Hom(\fW_F , A[p^n])\,,$$
where $F$ runs through all finite degree subextensions of ${\tilde L}/K$ (note that all homomorphisms from a finitely generated $\ZZ_p$-module such as $\fW_F$ into a finite group are continuous), and of course
$$\varinjlim_n\Hom_{cont}(\fW_{\tilde L} , A[p^n])=\Hom_{cont}(\fW_{\tilde L} , A[p^\infty])\,.$$
Hence we get, by Lemma \ref{l:s},
$$\F X_p(A/\tilde L)=\Hom_{cont}(\fW_{\tilde L} , A[p^\infty])^\vee\,.$$
Finally, the map
$$\fW_{\tilde L}\otimes_{\ZZ_p}A[p^\infty]^\vee \lr \Hom_{cont}(\fW_{\tilde L} , A[p^\infty])^\vee\,,$$
which sends $w\otimes Q^*$ to $\varphi\mapsto Q^*(\varphi(w))$, is an isomorphism because $A[p^\infty]^\vee$ is a finitely generated free $\ZZ_p$-module.
\end{proof}

Note that if $F$ is a finite extension of $K$ with constant field of cardinality $q^n$, then we have the commutative diagram  \[ \begin{CD}
\fW_F  @>{\deg_F}>> \ZZ_p \\
@VVV   @VV{n}V  \\
\fW_K @>{\deg_K}>> \ZZ_p,  \end{CD} \]
where the first down-arrow is the norm map and the second is the multiplication by $n$.
Therefore, since ${\tilde L}/K$ contains the constant $\ZZ_p$-extension, the exact sequence \eqref{e:weil} implies
\begin{equation}\label{e:fwfclim} \fW_{\tilde L}=\fC_{\tilde L}:=\varprojlim_F\fC_F. \end{equation}

\begin{lemma} \label{l:wtorsion} The group $\fW_{\tilde L}$ has no $p$-torsion. \end{lemma}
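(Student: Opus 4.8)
The plan is to exploit the description $\fW_{\tilde L}=\fC_{\tilde L}=\varprojlim_F\fC_F$ from \eqref{e:fwfclim} together with the behaviour of class groups along the layers of $\tilde L/K$, and in particular along the constant $\ZZ_p$-extension which $\tilde L$ contains. First I would recall that for a finite extension $F/K$ the finite group $\fC_F$ is the $p$-part of the divisor class group of the smooth proper curve with function field $F$, so that $\fC_F\simeq\Pic^0(\bar C_F)[p^\infty]$ for the corresponding curve over $\bar\FF$ descended suitably; more to the point, I want the norm maps $\fC_{F'}\to\fC_F$ in the projective system. The key observation is that for the constant field extension $F\subset F'$ of degree $p$ (so $F'=F\FF_{q^p}$ inside $\tilde L$), the norm map on divisor class groups, after $p$-completion, behaves like the norm in the $\ZZ_p$-extension attached to the Frobenius of $\bar\FF$ acting on $\Pic^0$. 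Concretely, if $\mathrm{Fr}$ denotes the $q$-power Frobenius and $J=\Pic^0(C/\bar\FF)$, then $\fC_F=(J[p^\infty])^{\mathrm{Fr}^{?}=1}$ up to the twist coming from $L$, and the transition map is multiplication by $1+\mathrm{Fr}+\cdots+\mathrm{Fr}^{p-1}$ on the relevant Tate-module-type object.

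Next, the core point: an element of $\fW_{\tilde L}=\varprojlim_F\fC_F$ is a compatible system $(c_F)$, and if it is killed by $p$ then each $c_F$ lies in $\fC_F[p]$. I would argue that the projective limit $\varprojlim_F \fC_F[p]$ vanishes because the transition maps are eventually divisible — more precisely, because $\tilde L\supset\Kpinf$, for every $F$ there is a larger $F'$ in the tower with constant field extended by a further $p$-power, and the composite norm $\fC_{F'}[p]\to\fC_F[p]$ factors through multiplication by a norm element $N=1+\sigma+\cdots+\sigma^{p^j-1}$ (with $\sigma$ a topological generator of the constant $\ZZ_p$-extension's Galois group) acting on a module where $\sigma-1$ is topologically nilpotent, so $N$ acts as $p^j\cdot(\text{unit})$ modulo higher terms; taking $j$ large makes this zero on the $p$-torsion. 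Hence any $p$-torsion element of the inverse limit is the zero system, i.e. $\fW_{\tilde L}[p]=0$, and since $\fW_{\tilde L}$ is a $\ZZ_p$-module this gives that it has no $p$-torsion.

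An alternative and perhaps cleaner route, which I would pursue if the above bookkeeping with norm elements gets delicate, is to use duality: by Lemma \ref{l:s} and Proposition \ref{p:selclassg}, $\Hom_{cont}(\fW_{\tilde L},A[p^\infty])^\vee\simeq \fC_{\tilde L}\otimes_{\ZZ_p}A[p^\infty]^\vee$, and one knows from \cite{tan10a} that $X_p(A/\tilde L)$, hence $\F X_p(A/\tilde L)$, is a finitely generated $\La(\Gamma_{ar})$-module; combined with the structure of $\fC_F$ as $\Pic^0[p^\infty]$-type groups along the tower, the $\ZZ_p$-module $\fW_{\tilde L}=\varprojlim_F\fC_F$ is seen to be free of finite rank over $\ZZ_p$ because each $\fC_F$ is finite and the transition maps are surjective norm maps in the constant $\ZZ_p$-extension direction, so the limit is torsion-free by Mittag-Leffler together with the fact that the "error" to divisibility dies in the limit. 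I expect the main obstacle to be the first one: pinning down exactly why the norm (transition) maps along the constant $\ZZ_p$-extension kill $p$-torsion in the limit, i.e. making rigorous the claim that the relevant norm elements act nilpotently on $\fC_F[p]$ as $F$ grows; this is really a statement about the Iwasawa module $\varprojlim_F \fC_F$ over the constant $\ZZ_p$-extension being torsion-free, which should follow because along that everywhere-unramified $\ZZ_p$-extension the $\mu$- and $\lambda$-invariants of the class-group tower are controlled (indeed the relevant module is free over $\ZZ_p$ of rank equal to the $p$-rank data of $J$), but the precise citation or argument needs care.
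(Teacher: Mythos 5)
Your main route is the paper's: restrict attention to the constant-field direction of the tower (which $\tilde L$ contains because $\tilde L\supset\Kpinf$) and show that the norm transition maps eventually annihilate $p$-torsion. The step you yourself flag as the main obstacle is exactly where your argument stops short, and the paper closes it with an observation you do not quite make. Fix a finite subextension $F$ and let $\C/\FF_F$ be its curve; along the constant subtower $F_n:=\FF(A[p^n])F$ one has $\fC_{F_n}=Jac\,\C[p^\infty]\cap Jac\,\C(\FF_{F_n})$ for the \emph{same} curve $\C$. Since $Jac\,\C[p](\bar\FF)$ is finite, there is a single $m$ with $Jac\,\C[p]\subset Jac\,\C(\FF_{F_m})$; hence for $n>m$ any $p$-torsion class $t_n$ is already invariant under $\Gal(F_n/F_{n-1})$, so the norm acts on it as multiplication by the degree, a power of $p$, which kills it. Your computation ``$N=p^j+(\sigma-1)\cdot(\cdots)$'' does not by itself finish the job: on $p$-torsion the $p^j$ term dies and you are left with $(\sigma-1)^{p^j-1}$, which is only nilpotent, so you would still need a uniform bound on the nilpotency degree as the level grows --- and that bound is again the finiteness of $Jac\,\C[p]$, i.e.\ the very input the paper uses directly. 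Note also that one must fix $F$ first and then move only in the constant direction: the curve, hence its $p$-torsion, changes if you move in the (possibly ramified) $L$-direction; the reduction is harmless because an inverse limit of torsion-free groups is torsion-free.

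Your alternative route is not valid as stated. Mittag--Leffler and surjectivity of transition maps in an inverse system of finite groups give no torsion-freeness at all: $\varprojlim \ZZ/p\ZZ$ with identity maps is pure torsion. All the content lies in the eventual divisibility of the norm maps on $p$-torsion, which is the first route.
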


\begin{proof} For $F$ a finite extension of $K$ put $F_n:=\FF(A[p^n])F$. It suffices to show that for any $F$ the projective
limit (on $n$) of $\fC_{F_n}$ has no $p$-torsion. Let $\C/\FF_F$ be the curve associated with the function field $F$ (whose field of constants is $\FF_F$), so that
$$\fC_{F_n}=Jac\,\C[p^\infty]\cap Jac\,\C(\FF_{F_n}).$$
Let $t=(t_n)$ be an element in the $p$-torsion of $\varprojlim\fC_{F_n}$: then $t_n\in Jac\,\C[p]$ for all $n$ and hence there is some $m$ such that $t_n\in Jac\,\C(\FF_{F_m})$ for every $n$. But then for $n>m$ the norm map $\fC_{F_n}\rightarrow\fC_{F_{n-1}}$ acts on $t_n$ as multiplication by a power of $p$, hence $t=0$.
\end{proof}

\end{subsubsection}
\end{subsection}

\begin{subsection}{Frobenius Twist of Class Groups}\label{su:ftcg}
Let $\cO$ denote the ring of integers of some finite extension over $\QQ_p$. Recall the algebras $\La_{\cO}(\Gamma)$ and $\La_{\cO}(\tilde \Gamma)$ defined in Subsection \ref{su:iw}.

\subsubsection{The twist matrix} \label{ss:twist}
Since $A$ is defined over $\FF$, $A[p^{\infty}]$ is actually a $\Gal(\bar\FF/\FF)$-module. The latter group is topologically generated by the Frobenius substitution $\Fr_q$.
After the choice of an isomorphism $A[p^{\infty}]\simeq(\QQ_p/\ZZ_p)^g$, the action of $\Fr_q$ becomes that of a $g\times g$ matrix $\bo u$, called the twist matrix (see page 216 of \cite{mazur72} for a more detailed discussion). Note that the actions of $\Fr_q$ on $A[p^\infty]$ coincides with that of the Frobenius endomorphism $\tF_q$. By \cite[Corollary 4.37]{mazur72}, the eigenvalues of $\tF_q$ of $A$ are
$$\alpha_1,...,\alpha_g,\, \beta_1:=q/\alpha_1,...,\beta_g:=q/\alpha_g,$$
where $\alpha_1$,...,$\alpha_g$ are the eigenvalues of $\bo u$, counted with multiplicities. Assume that $\cO$ contains $\alpha_i$: then
\begin{equation} \label{e:alphabeta} \alpha_i\in\cO^{\times}\text{, }\beta_i\in q\cO\,. \end{equation}

\begin{lemma}\label{l:ses}
The twist matrix $\bo u$ is semi-simple. If $A$ is simple, then $\bo u$ has distinct eigenvalues.
\end{lemma}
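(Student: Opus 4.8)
The plan is to deduce both claims from the identification, recalled just before the statement, of $\bo u$ with the matrix by which the Frobenius endomorphism $\tF_q\in\E_A$ acts on the $p$-adic (\'etale) Tate module; that is, $\bo u$ is the image of $\tF_q$ under the $\QQ_p$-linear action of $\E_A^{0}:=\QQ\otimes_{\ZZ}\E_A$ on $T_pA\otimes_{\ZZ_p}\QQ_p\cong\QQ_p^{\,g}$. Consequently every polynomial relation satisfied by $\tF_q$ in $\E_A^{0}$ is inherited by $\bo u$, while the eigenvalues of $\bo u$ are exactly the $\alpha_1,\dots,\alpha_g$ of the recollection preceding the statement, which are half of the roots of the degree-$2g$ characteristic polynomial $P(T)$ of $\tF_q$.

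I would treat the second assertion first, and then obtain the first one from it by reduction to the simple case. So suppose $A$ is simple. Since $A$ is also ordinary, $\E_A^{0}$ is the CM field $\QQ(\tF_q)$, of degree $2g$ over $\QQ$ --- the classical description of the endomorphism algebra of a simple ordinary abelian variety over a finite field (see \cite{wat69}, or \cite{gm13}). Then $P(T)$, being a monic degree-$2g$ polynomial having $\tF_q$ as a root, must equal the minimal polynomial of $\tF_q$ over $\QQ$; hence $P$ is irreducible over $\QQ$ and therefore separable. Its $2g$ roots $\alpha_1,\dots,\alpha_g,\beta_1,\dots,\beta_g$ are thus pairwise distinct, so in particular the eigenvalues $\alpha_1,\dots,\alpha_g$ of $\bo u$ are distinct.

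For the first assertion in general, I would invoke Poincar\'e reducibility to write $A$ up to isogeny as $\prod_i A_i^{n_i}$ with each $A_i$ simple, noting that each $A_i$ is again ordinary because the $p$-rank is additive and bounded by the dimension. The twist matrix is additive on products, and any isogeny $A\to A'$ induces a $\tF_q$-equivariant isomorphism $T_pA\otimes\QQ_p\iso T_pA'\otimes\QQ_p$; hence $\bo u$ is conjugate over $\QQ_p$ to $\bigoplus_i\bo u_{A_i}^{\oplus n_i}$. Each $\bo u_{A_i}$ has distinct eigenvalues by the previous paragraph, so is semisimple, and a direct sum of semisimple operators is semisimple; therefore $\bo u$ is semisimple. (Alternatively, one could appeal directly to Tate's theorem that $\tF_q$ is a semisimple element of $\E_A^{0}$: its minimal polynomial over $\QQ$ is then separable, hence so is its image in $\QQ_p[T]$, and therefore so is the minimal polynomial of $\bo u$, which divides it.)

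No step here is a real obstacle: the one point deserving care is that separability of a polynomial over $\QQ$ persists after base change to $\QQ_p$, which is clear since separability is detected already over $\bar\QQ\subset\bar\QQ_p$; the genuinely deep inputs --- the structure of $\E_A^{0}$ for ordinary $A$, or Tate's semisimplicity theorem --- are simply quoted.
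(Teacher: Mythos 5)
Your proof is correct and takes essentially the same route as the paper's: reduce to the case of simple $A$ (the paper does this implicitly with ``it is sufficient to assume that $A$ is simple''), then invoke Tate's theorem that $\QQ\otimes\E_A=\QQ(\tF_q)$ is a field of degree $2g$, so that the $2g$ Frobenius eigenvalues are the roots of the minimal polynomial and hence distinct. The only difference is that you spell out the Poincar\'e-reducibility reduction and the stability of semisimplicity under direct sums, which the paper leaves unstated.
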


\begin{proof} It is sufficient to assume that $A$ is simple, whence $\QQ\otimes \mathcal{E}$ is a field by Lemma \ref{l:silly}.
Then by \cite[Theorem 2(c)]{Ta66}, the algebra
$\QQ\otimes \mathcal{E}$ equals $\QQ(\tF_q)$, and is a filed extension of $\QQ$ of degree $2g$.
Therefore, $\alpha_1,...,\alpha_g, \beta_1,...,\beta_g$ are exactly the roots of the minimal polynomial of $\tF_q$ over $\QQ$.
In particular, they are distinct.
\end{proof}

\subsubsection{The Frobenius action on $A[p^\infty]^\vee$}\label{su:lambdai}
The action of $\Fr_q$ on $A[p^{\infty}]^\vee\simeq \ZZ_p^g$ is through the inverse matrix $\bo u^{-1}$. Suppose $\cO$ contains all $\alpha_1,...,\alpha_g$.
Let $\lambda_i$ denotes the continuous homomorphism $\tilde\Gamma \rightarrow \Gal(K(A[p^{\infty}])/K)\rightarrow \cO^{\times}$ such that
\begin{equation}\label{e:lambdai} \lambda_i(\Fr_q):=\alpha_i^{-1} \end{equation}
Let $\cO(\lambda_i)$ be the twist of $\cO$ by $\lambda_i$ defined in \S\ref{su:twistmodule} (see \eqref{e:twistbyphi} and the lines just after it). Let $H$ be the group in \eqref{e:prod2}.  Denote $\psi_i:=\lambda_{i|H}$.
Note that Proposition \ref{p:selclassg} implies that $\fW_{\tilde L}$ is finitely generated over $\La(\Gamma_{ar})$.

\begin{proposition}\label{p:cs} Assume that $\fW_{\tilde L}$ is torsion over $\La(\Gamma_{ar})$ and that $\cO$ contains $\alpha_1,...,\alpha_g$ as well as $\psi(h)$ for all $\psi\in H^\vee$, $h\in H$. Then for every $\psi\in H^\vee$ there is a pseudo-isomorphism of $\Lambda_{\cO}(\Gamma_{ar})$-modules
$$\big(\cO\F X_p(A/\tilde L)\big)^{(\psi)}\sim \bigoplus_{i=1}^g \cO\fW_{\tilde L}^{(\psi\psi_i^{-1})}\otimes_{\cO} \cO(\lambda_i)\,.$$
Moreover, we have
$$\cO\F X_p(A/L_{ar}) \sim \bigoplus_{i=1}^g \cO\fW_{\tilde L}^{(\psi_i^{-1})}\otimes_{\cO} \cO(\lambda_i)\,.$$
\end{proposition}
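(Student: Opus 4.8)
The plan is to deduce everything from the isomorphism $\F X_p(A/\tilde L)\simeq\fW_{\tilde L}\otimes_{\ZZ_p}A[p^{\infty}]^\vee$ of Proposition \ref{p:selclassg}, by replacing the factor $A[p^\infty]^\vee$ with the sublattice spanned by eigenvectors of the twist matrix. First I would base change to $\cO$: since $\cO$ is flat over $\ZZ_p$, Proposition \ref{p:selclassg} yields an isomorphism of $\La_{\cO}(\tilde\Gamma)$-modules $\cO\F X_p(A/\tilde L)\simeq\cO\fW_{\tilde L}\otimes_{\cO}M$, where $M:=\cO\otimes_{\ZZ_p}A[p^{\infty}]^\vee$ is free of rank $g$ over $\cO$, the $\tilde\Gamma$-action on each factor (diagonal on the tensor product) factoring through $\Gal(K(A[p^\infty])/K)$, and where $\Fr_q$ acts on $M$ via $\bo u^{-1}$.

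By Lemma \ref{l:ses} the matrix $\bo u^{-1}$ is semisimple, and by \eqref{e:alphabeta} its eigenvalues $\alpha_1^{-1},\dots,\alpha_g^{-1}$ lie in $\cO^\times$; hence it is diagonalizable over the fraction field of $\cO$. Choosing eigenvectors and rescaling them into $M$, I obtain $\cO$-linearly independent $v_1,\dots,v_g\in M$ which span $M$ over $\mathrm{Frac}(\cO)$ and satisfy $\Fr_q\cdot v_i=\alpha_i^{-1}v_i$. Because $\Gal(K(A[p^\infty])/K)$ is procyclic with $\Fr_q$ a topological generator, each $v_i$ is an eigenvector for the whole group, with $\gamma\cdot v_i=\lambda_i(\gamma)\,v_i$. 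Thus $M':=\bigoplus_{i=1}^{g}\cO v_i$ is a $\tilde\Gamma$-stable sublattice of $M$, isomorphic as a $\La_{\cO}(\tilde\Gamma)$-module to $\bigoplus_{i=1}^{g}\cO(\lambda_i)$, and the quotient $M/M'$ is a finite $\cO$-module.

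I would then tensor $0\to M'\to M\to M/M'\to 0$ with $\cO\fW_{\tilde L}$ over $\cO$, obtaining the $\tilde\Gamma$-equivariant exact sequence
$$0\to\Tor_1^{\cO}\!\big(\cO\fW_{\tilde L},M/M'\big)\to\cO\fW_{\tilde L}\otimes_{\cO}M'\to\cO\fW_{\tilde L}\otimes_{\cO}M\to\cO\fW_{\tilde L}\otimes_{\cO}(M/M')\to 0.$$
Here I use Lemma \ref{l:wtorsion}: since $\fW_{\tilde L}$, hence $\cO\fW_{\tilde L}$, has no $p$-torsion, $\Tor_1^{\cO}(\cO\fW_{\tilde L},\cO/\pi)=\cO\fW_{\tilde L}[\pi]=0$ for a uniformizer $\pi$, so d\'evissage along a composition series of the finite module $M/M'$ gives $\Tor_1^{\cO}(\cO\fW_{\tilde L},M/M')=0$; and since $\cO\fW_{\tilde L}$ is torsion over $\La_{\cO}(\Gamma_{ar})$ with no $p$-torsion, $\cO\fW_{\tilde L}/\pi$ is pseudo-null, so $\cO\fW_{\tilde L}\otimes_{\cO}(M/M')$ is pseudo-null over $\La_{\cO}(\Gamma_{ar})$ (it is a successive extension of twists of $\cO\fW_{\tilde L}/\pi$, and pseudo-null modules are stable under extension and twist). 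Hence $\cO\fW_{\tilde L}\otimes_{\cO}M'\to\cO\fW_{\tilde L}\otimes_{\cO}M$ is a pseudo-isomorphism of $\La_{\cO}(\Gamma_{ar})$-modules and
$$\cO\F X_p(A/\tilde L)\;\simeq\;\cO\fW_{\tilde L}\otimes_{\cO}M\;\sim\;\cO\fW_{\tilde L}\otimes_{\cO}M'\;=\;\bigoplus_{i=1}^{g}\cO\fW_{\tilde L}\otimes_{\cO}\cO(\lambda_i).$$

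Finally, taking the $\psi$-isotypic part for $\psi\in H^\vee$ is exact (as $|H|$ is prime to $p$ and $\cO$ contains the values $\psi(h)$) and preserves pseudo-nullity over $\La_{\cO}(\Gamma_{ar})$, hence preserves the pseudo-isomorphism just obtained; since $H$ acts on $\cO(\lambda_i)$ through $\psi_i=\lambda_{i|H}$, the $\psi$-part of $\cO\fW_{\tilde L}\otimes_{\cO}\cO(\lambda_i)$ is $\cO\fW_{\tilde L}^{(\psi\psi_i^{-1})}\otimes_{\cO}\cO(\lambda_i)$, which gives the first assertion. For the displayed formula over $L_{ar}$, I would note that $L_{ar}$ is the fixed field of $H$ in $\tilde L$ and $[\tilde L:L_{ar}]=|H|$ is prime to $p$, so the standard restriction--corestriction argument identifies $\Sel_{p^\infty}(A/L_{ar})$ with $\Sel_{p^\infty}(A/\tilde L)^{H}$, hence $X_p(A/L_{ar})$ with $X_p(A/\tilde L)_{H}=X_p(A/\tilde L)^{(\mathbf{1})}$; applying the idempotent $\F$ (which commutes with the Galois action) and base changing to $\cO$ gives $\cO\F X_p(A/L_{ar})=\big(\cO\F X_p(A/\tilde L)\big)^{(\mathbf{1})}$, and specializing the first assertion to $\psi=\mathbf{1}$ yields the claim. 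The crux of the argument is the control of the ``error lattice'' $M/M'$ after tensoring with $\cO\fW_{\tilde L}$, which is precisely where the absence of $p$-torsion (Lemma \ref{l:wtorsion}) is indispensable; the remaining ingredients --- flatness, exactness of the isotypic projection, and the behaviour of twists (\S\ref{su:twistmodule}, Lemma \ref{l:phi[]chi}) --- are routine.
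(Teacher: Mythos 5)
Your argument is correct and follows essentially the same route as the paper: both start from Proposition \ref{p:selclassg}, use the semisimplicity of the twist matrix (Lemma \ref{l:ses}) to embed $\bigoplus_i\cO(\lambda_i)$ into $\cO A[p^\infty]^\vee$ with finite cokernel, use Lemma \ref{l:wtorsion} together with Lemma \ref{l:psn} to see that tensoring with $\fW_{\tilde L}$ turns that finite error into a pseudo-null module (the paper invokes flatness of $\fW_{\tilde L}$ over $\ZZ_p$ where you check the $\Tor_1$ vanishing by d\'evissage), and then pass to $H$-isotypic components and use prime-to-$p$ descent for the statement over $L_{ar}$. The only difference is expository: you spell out details the paper leaves implicit.
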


\begin{proof} Since $\Fr_q$ topologically generates $\Gal(K(A[p^\infty])/K)$, Lemma \ref{l:ses} yields an exact sequence of $\cO[[\Gal(K(A[p^{\infty}])/K)]]$-modules
\begin{equation}\label{e:vax} 0\lr \bigoplus_{i=1}^g \cO(\lambda_i) \lr \cO A[p^{\infty}]^\vee \lr Q \lr 0   \end{equation}
with $p^m Q=0$ for some $m$. Lemma \ref{l:wtorsion} implies that $\fW_{\tilde L}$ is a flat $\ZZ_p$-module and is annihilated by some $f\in\La(\Gamma_{ar})$ coprime with $p\,$: by Lemma \ref{l:psn}, it follows that the module $\fW_{\tilde L}\otimes_{\ZZ_p} Q$ is pseudo-null over $\La_{\cO}(\Gamma_{ar})$. Hence \eqref{e:vax} implies that $\fW_{\tilde L}\otimes_{\ZZ_p}\cO A[p^{\infty}]^\vee$ is pseudo-isomorphic to $\bigoplus_{i=1}^g \fW_{\tilde L}\otimes_{\ZZ_p} \cO(\lambda_i)$. Since the group $H$ is of order prime to $p$,
$$\bigoplus_{i=1}^g \fW_{\tilde L}\otimes_{\ZZ_p} \cO(\lambda_i)=\bigoplus_{i=1}^g \bigoplus_{\varphi\in H^\vee}\cO\fW_{\tilde L}^{(\varphi)}\otimes_{\cO} \cO(\lambda_i)\,.$$
The first statement then follows from Proposition \ref{p:selclassg}.
To prove the second claim, we apply the inflation-restriction sequence and the restriction-corestriction formula
to obtain
$$\Sel_{p^\infty}(A/L_{ar})=\Sel_{p^{\infty}}(A/{\tilde L})^{H},$$
also by the fact that the order of $H$ is prime to $p$.
Therefore, if $h$ is a generator of $H$ then
$$\cO\F X_p(A/L_{ar})=\cO\F X_p(A/\tilde L)/(1-h)\cO\F X_p(A/\tilde L)$$
which, according to the above argument, is pseudo-isomorphic to $\bigoplus \fW_{\tilde L}^{(\psi_i^{-1})}\otimes_{\ZZ_p} \cO(\lambda_i).$
\end{proof}

\end{subsection}
\end{section}


\begin{section}{The Iwasawa Main Conjecture for constant abelian varieties} \label{s:imcconstant}

In this section, we keep the notation of \S\ref{su:sgcg} and \S\ref{su:ftcg}.
We shall identify $\Gal(K(A[p^\infty])/K)$ with $\Gal(\FF(A[p^\infty])/\FF)$ and consider the Frobenius substitution $\Fr_q$ as an element of it.

\begin{subsection}{The Stickelberger element and divisor class groups}\label{su:sc}
For a topological ring $R$, the ring of formal power series $R[[u]]$ is endowed with the topology coming from identification with an infinite product of copies of $R$. Any continuous ring homomorphism $\phi\colon R\rightarrow R'$ extends to a continuous ring homomorphism $R[[u]]\rightarrow R'[[u]]$
by $u\mapsto u$.

\subsubsection{The Stickelberger element} \label{ss:stickelb}
Let $M/K$ be an abelian Galois extension (of finite or infinite degree) unramified outside $S$; in case $S=\emptyset$, we furthermore stipulate that $M$ is a subfield of $K\bar\FF$, so that $\Gal(M/K)$ is generated by $\Fr_q$ (note that if $S=\emptyset$ then $L=\Kpinf$, $\tilde L=K(A[p^\infty])$, because non-arithmetic abelian totally unramified extensions of $K$ have Galois group a factor of the finite group $\fC_K$).

For any place $v$ outside $S$ let the symbol $[v]_M\in\Gal(M/K)$ denote the (arithmetic) Frobenius element at $v$.
Set $\nabla_S(u):=1-\Fr_q \cdot u $ if $S=\emptyset$ and $\nabla_S(u):=1$ if $S\not=\emptyset$. Since there are only finitely many places with degree bounded by a given positive integer, we can express the infinite product
\begin{equation}\label{e:Thetaprod}  \Theta_{M,S}(u):=\nabla_S(u)\cdot \prod_{v\not\in S} (1-[v]_M\cdot u^{\deg(v)})^{-1} \end{equation}
as a formal power series in $\ZZ[\Gal(M/K)][[u]]\subset \ZZ_p[[\Gal(M/K)]][[u]]$.
For any extension $M/M'$, the natural map $\Gal(M/K)\rightarrow \Gal(M'/K)$ induces a continuous $\ZZ_p$-algebra homomorphism
$$p_{M/M'}:\ZZ_p[[ \Gal(M/K)]][[u]]\rightarrow\ZZ_p[[\Gal(M'/K)]][[u]]$$
by $u\mapsto u$.  We have
\begin{equation} \label{e:pmmtheta}
p_{M/M'}(\Theta_{M,S}(u))=\Theta_{M',S}(u).
\end{equation}

Let $\omega\colon{\Gal(M/K)}\rightarrow \CC^{\times}$ be a continuous character. Denote by $S_\omega\subseteq S$ its ramification locus and $K_\omega$ the fixed field of $\Ker(\omega)$. For $v$ a place of $K$ set $q_v:=q^{\deg(v)}$. Define
\begin{equation} \label{e:L(omega)} L(\omega,s):=\prod_{v\notin S_\omega}(1-\omega([v]_{K_\omega}) q_v^{-s})^{-1}. \end{equation}

From now on, we fix embeddings $\bar{\QQ}\subset \bar{\QQ}_p$ and $\bar{\QQ}\subset \CC$. Thus, if $\cO$ contains the image of $\omega$,
which is finite, then $\omega$ can be viewed as a character $\Gal(M/K)\rightarrow \cO^{\times}$ that extends $\cO$-linearly to a ring homomorphism
$\omega\colon\cO[[  \Gal(M/K)]][[u]]\rightarrow \cO[[u]]$, by $u\mapsto u$ again.
Applying $\omega$ to $\nabla_S$, we get a polynomial in $\cO[u]$. Define
\begin{equation} \label{e:L(S,omega)} L^*_S(\omega,s):=\omega(\nabla_S)(q^{-s})\cdot \prod_{v\not\in S} (1-\omega([v]_M)\,q_v^{-s})^{-1}. \end{equation}

If $\alpha\in\bar{\QQ}$ and is contained in $\cO$, then we can ask if $\Theta_{M,S}(\alpha)$ converges and, if it does, what is the value of $\omega(\Theta_{M,S}(\alpha))$.
Recall that for any topological ring $R$ the Tate algebra $R\langle u\rangle$ consists of those power series in $R[[u]]$ whose coefficients tend to 0. The algebra $\ZZ_p[[\Gal(M/K)]]$ is the inverse limit of $\ZZ_p[\Gal(F/K)]$ as $F$ varies among all finite subextensions: therefore a power series $f\in\ZZ_p[[\Gal(M/K)]][[u]]$ belongs to the Tate algebra if and only if $p_{M/F}(f)$ is in $\ZZ_p[\Gal(F/K)][u]$ for all $F$.

\begin{proposition} \label{p:Thetaconv} Assume the above notation.
The power series $\Theta_{M,S}(u)$ belongs to $\ZZ_p[[\Gal(M/K)]]\langle u\rangle$, and hence $\Theta_{M,S}(\alpha)$
converges for every $\alpha\in\bar{\QQ}\cap \cO$. If furthermore $\alpha=q^{-s_0}$ for some $s_0\in\CC$,
then
\begin{equation} \label{e:evalTheta}  \omega\big(\Theta_{M,S}(\alpha)\big)=L_S^*(\omega,s_0)\,. \end{equation}
\end{proposition}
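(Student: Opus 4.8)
The plan is to establish the two assertions separately. First I would show that $\Theta_{M,S}(u)$ lies in the Tate algebra $\ZZ_p[[\Gal(M/K)]]\langle u\rangle$; the convergence of $\Theta_{M,S}(\alpha)$ for every $\alpha\in\bar\QQ\cap\cO$ is then immediate, since $|\alpha|_p\le 1$ forces $a_n\alpha^n\to 0$ for any null sequence $(a_n)$ of coefficients. By the criterion recalled just before the statement, membership in the Tate algebra can be tested layer by layer: $\Theta_{M,S}(u)$ belongs to it precisely when, for every finite subextension $F/K$ of $M/K$, the image $p_{M/F}(\Theta_{M,S}(u))=\Theta_{F,S}(u)$ reduces to a polynomial modulo every power of $p$, i.e. when its coefficients tend $p$-adically to $0$ in $\ZZ_p[\Gal(F/K)]$. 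Fix such an $F$ and a finite extension $\cO'$ of $\ZZ_p$ inside $\bar\QQ_p$ containing the values of all characters of the finite abelian group $G:=\Gal(F/K)$; then $g\mapsto(\chi(g))_\chi$ embeds $\ZZ_p[G]$ into $\prod_\chi\cO'$ compatibly with the $p$-adic topologies, so it suffices to prove that each $\chi$-component of $\Theta_{F,S}(u)$ has coefficients tending to $0$.

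That $\chi$-component equals $\chi(\nabla_S)(u)\cdot\prod_{v\notin S}\bigl(1-\chi([v]_F)u^{\deg(v)}\bigr)^{-1}$, and the infinite product is, up to the finitely many Euler factors at places of $S$, the Artin--Weil $L$-function $L(\chi,u)$ of $\chi$ over the function field $K$. Here I would invoke Weil's rationality theorem: $L(\chi,u)$ is a polynomial over $\cO'$ when $\chi$ is nontrivial, whereas for $\chi$ trivial it is the zeta function $Z_K(u)=P_K(u)/\bigl((1-u)(1-qu)\bigr)$ with $P_K\in\ZZ[u]$. When $S\neq\emptyset$ we have $\nabla_S=1$ and the trivial-character product picks up an extra factor $1-u^{\deg(v)}$ for some $v\in S$, cancelling the pole at $u=1$; when $S=\emptyset$ we have $M\subseteq K\bar\FF$, hence $[v]_F=\Fr_q^{\deg(v)}$, the product becomes $Z_K(\zeta u)$ with $\zeta:=\chi(\Fr_q)$, and $\nabla_S(u)$ specializes to $1-\zeta u$, which cancels the pole of $Z_K(\zeta u)$ at $u=\zeta^{-1}$. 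In every case the $\chi$-component is a rational function of the shape $P(u)/(1-q\zeta u)$ with $P\in\cO'[u]$ and $\zeta$ a root of unity (with $\zeta=1$, or no denominator at all, whenever $S\neq\emptyset$). Since the coefficient of $u^N$ in such a function equals $(q\zeta)^{\,N-\deg P}$ times a fixed element of $\cO'$ for all large $N$, and $|q\zeta|_p=|q|_p<1$ because $q$ is a power of $p$, these coefficients tend to $0$. This proves $\Theta_{M,S}(u)\in\ZZ_p[[\Gal(M/K)]]\langle u\rangle$.

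For the interpolation identity, enlarge $\cO$ so that it contains $\alpha$ and the finite image of $\omega$. Then $\omega$ extends to a continuous $\cO$-algebra homomorphism $\cO[[\Gal(M/K)]]\langle u\rangle\to\cO\langle u\rangle$ fixing $u$, and also to a continuous homomorphism $\cO[[\Gal(M/K)]]\to\cO$. Since $\omega$ is multiplicative and continuous and the product defining $\Theta_{M,S}(u)$ converges coefficient by coefficient, applying $\omega$ term by term yields $\omega(\Theta_{M,S}(u))=\omega(\nabla_S)(u)\cdot\prod_{v\notin S}\bigl(1-\omega([v]_M)u^{\deg(v)}\bigr)^{-1}$ inside $\cO\langle u\rangle$. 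Evaluation at $u=\alpha$ is itself a continuous ring homomorphism on the Tate algebra, so it commutes with $\omega$ — whence $\omega(\Theta_{M,S}(\alpha))$ equals the value at $u=\alpha$ of $\omega(\Theta_{M,S}(u))$, both being continuous ring homomorphisms that agree on $u$ and on $\Gal(M/K)$ — and it commutes with the convergent product, giving $\omega(\Theta_{M,S}(\alpha))=\omega(\nabla_S)(\alpha)\cdot\prod_{v\notin S}\bigl(1-\omega([v]_M)\alpha^{\deg(v)}\bigr)^{-1}$. Substituting $\alpha=q^{-s_0}$, so that $\alpha^{\deg(v)}=q_v^{-s_0}$, the right-hand side is by definition exactly $L^*_S(\omega,s_0)$ in \eqref{e:L(S,omega)}.

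The single non-formal ingredient is Weil's rationality of $L$-functions over the function field $K$; the only delicate point is the bookkeeping showing that the factor $\nabla_S$ is precisely what cancels the pole at $u=1$ (resp.\ at $u=\zeta^{-1}$) of the trivial-character contribution, leaving a rational function whose unique pole $u=(q\zeta)^{-1}$ sits strictly outside the closed $p$-adic unit disc — which is exactly why the coefficients are forced to be $p$-adically small. Everything else reduces to routine manipulations with continuous homomorphisms of Iwasawa and Tate algebras.
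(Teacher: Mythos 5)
Your argument for membership in the Tate algebra is correct in substance and takes a genuinely different route from the paper's. The paper multiplies $\Theta_{M,S}(u)$ by an auxiliary factor $f_T(u)=\prod_{v\in T}(1-q_v[v]_M u^{\deg(v)})$ for a nonempty finite $T$ disjoint from $S$, so that every character component becomes an honest polynomial (Weil), and then divides back by $f_T$, which is a unit in $\ZZ_p[[\Gal(M/K)]]\langle u\rangle$; you instead leave the pole at $u=(q\zeta)^{-1}$ in place and observe that it lies outside the closed $p$-adic unit disc, so the expansion of $P(u)/(1-q\zeta u)$ already has coefficients tending to $0$. Both work, and yours avoids the auxiliary set $T$. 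One imprecision to repair: Weil's rationality gives a polynomial for characters that are nontrivial on the \emph{geometric} Galois group, not for all nontrivial $\chi$. A nontrivial $\chi$ factoring through $\Gal(K\bar\FF/K)$ yields $Z_K(\zeta u)$ with $\zeta=\chi(\Fr_q)\neq1$, whose pole at $u=\zeta^{-1}$ sits \emph{on} the unit circle; when $S\neq\emptyset$ it is cancelled by the omitted Euler factor $1-(\zeta u)^{\deg(v)}$ at any $v\in S$, exactly as you argue for the trivial character, but your case analysis only mentions the trivial character there. The fix is the same computation, so this is minor.

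The interpolation step, however, has a genuine gap as written. You obtain \eqref{e:evalTheta} by asserting that evaluation at $u=\alpha$ ``commutes with the convergent product'' $\prod_{v\notin S}(1-\omega([v]_M)u^{\deg(v)})^{-1}$. But in the intended application $\alpha=\alpha_i^{-1}$ is a $p$-adic unit, so $\omega([v]_M)\alpha^{\deg(v)}$ does not tend to $0$ and the Euler product diverges $p$-adically at $u=\alpha$; likewise the complex Euler product defining $L_S^*(\omega,s)$ converges only for $\Re(s)>1$, while the relevant $s_0$ has $\Re(s_0)=\tfrac12$, so $L_S^*(\omega,s_0)$ can only mean the value of the meromorphic (in fact rational) continuation. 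Hence neither side of your claimed identity is a limit of partial products, and the termwise argument does not apply. The correct bridge is the rational function you already constructed in the first part: $\omega(\Theta_{M,S}(u))=P(u)/(1-q\zeta u)$ with $P\in\cO[u]$ and pole outside the closed unit disc, so its Tate-algebra expansion converges at $u=\alpha$ to the rational value $P(\alpha)/(1-q\zeta\alpha)$, while Weil's theorem identifies the same rational function with $L_S^*(\omega,s)$ at $u=q^{-s}$. This is in substance what the paper does via the polynomial $P_{\omega,S,T}$ together with the nonvanishing of $\omega(f_T)(q^{-s_0})$. So the conclusion is reachable from material you already have, but the final step must be rerouted through the rational function rather than through the Euler product.
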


\begin{proof} Choose a non-empty finite set $T$ of places of $K$ such that $S\cap T=\emptyset$ and define
$$\Theta_{M,S,T}(u):=\Theta_{M,S}(u) \cdot f_T(u),$$
with $f_T(u):=\prod_{v\in T} (1-q_v[v]_M\cdot u^{\deg(v)})$. Then $\omega(f_T)(q^{-s})=\prod_{v\in T} (1-\omega([v]_M)\cdot q_v^{1-s})$. Define
$$ L^*_{S,T}(\omega,s):= L^*_S(\omega,s)\cdot \omega(f_T)(q^{-s}).$$
By the above definitions and \eqref{e:Thetaprod}, \eqref{e:L(S,omega)} we get $\omega(\Theta_{M,S,T})(q^{-s})=L^*_{S,T}(\omega,s)$. A result of Weil \cite[VII, Theorems 4 and 6]{We73} \footnote{Amusingly, the proof in Weil's book is essentially due to Tate, while Weil's geometric argument is explained by Tate in \cite[V, Th\'eor\`eme 2.5]{t84}.} yields
$$L_{S,T}^*(\omega,s)=P_{\omega,S,T}(q^{-s}),$$
where $P_{\omega,S,T}(u)$ is a polynomial in $\cO[u]$. Note that if $\omega$ factors through $p_{M/F}$, then $P_{\omega,S,T}(q^{-s})$ also equals $\omega(\Theta_{F,S,T})(q^{-s})$, by \eqref{e:pmmtheta}.
This implies that if $F/K$ is a finite extension, then $\Theta_{F,S,T}(u)$ belongs to $ \ZZ[\Gal(F/K)][u]$ (since $\omega(\Theta_{F,S,T})(u)$ is a polynomial for arbitrary $\omega\in\Gal(F/K)^\vee$).

Therefore, $\Theta_{M,S,T}\in\ZZ_p[[\Gal(M/K)]]\langle u\rangle$. To complete the proof of the first statement it suffices to observe that
$f_T(u)$ is a unit in $\ZZ_p[[\Gal(M/K)]]\langle u\rangle$.

As for \eqref{e:evalTheta}, choose $F$ such that $\omega$ factors through $p_{M/F}$. Then $\omega(\Theta_{M,S,T}(\alpha))=\omega(\Theta_{M,F,T}(\alpha))$.
But since $\Theta_{M,F,T}(u)$ is a polynomial, we have $\omega(\Theta_{M,S,T}(\alpha))=P_{\omega,S,T}(q^{-s_0})=L_{S,T}^*(\omega,s_0)$. To conclude note that $L_S^*(\omega,s_0)$ is well defined because $\omega(f_T)(q^{-s_0})\not=0$, since $\alpha\in\cO$.

\end{proof}

Define the Stickelberger element
\begin{equation} \label{e:thetaLST} \theta_{M,S}:=\Theta_{M,S}(1)\in\ZZ_p[[\Gal(M/K)]]\,.   \end{equation}

\begin{lemma} \label{l:thetanot0} If $\Kpinf\subseteq M$, then $\theta_{M,S}\neq0\,.$
\end{lemma}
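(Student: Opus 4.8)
The plan is to reduce at once to the case $M=\Kpinf$ and then to write $\theta_{\Kpinf,S}$ down explicitly in terms of the zeta function of the curve $\C$ with function field $K$. The projection $p_{M/\Kpinf}$ is a continuous ring homomorphism with $p_{M/\Kpinf}(\Theta_{M,S}(u))=\Theta_{\Kpinf,S}(u)$ by \eqref{e:pmmtheta}; since by Proposition \ref{p:Thetaconv} both sides lie in the Tate algebra and evaluation at $u=1$ is continuous, this gives $p_{M/\Kpinf}(\theta_{M,S})=\theta_{\Kpinf,S}$. Hence it suffices to prove $\theta_{\Kpinf,S}\neq 0$ in $\ZZ_p[[\Gal(\Kpinf/K)]]$.

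Put $\gamma:=\Fr_q$, a topological generator of $\Gal(\Kpinf/K)\simeq\ZZ_p$, and fix the identification $\ZZ_p[[\Gal(\Kpinf/K)]]\simeq\ZZ_p[[T]]$, $\gamma\mapsto 1+T$. Since $\Kpinf/K$ is the constant field extension, $[v]_{\Kpinf}=\gamma^{\deg v}$ for every place $v$, so expanding the Euler product \eqref{e:Thetaprod} over effective divisors gives
$$\Theta_{\Kpinf,S}(u)=\nabla_S(u)\cdot\sum_{n\geq 0}a_n\,\gamma^n u^n=\sum_{n\geq 0}e_n\,\gamma^n u^n,$$
where $a_n$ is the number of effective divisors of degree $n$ on $\C$ whose support avoids $S$, and $e_n\in\ZZ$ (one has $e_n=a_n$ if $S\neq\emptyset$, while for $S=\emptyset$ the factor $\nabla_S(u)=1-\gamma u$ yields $e_n=a_n-a_{n-1}$, with $a_{-1}:=0$). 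Now invoke the rationality of the zeta function $Z_\C(u)=\sum_n b_n u^n$ (Weil's theorem, already used in the proof of Proposition \ref{p:Thetaconv}): $Z_\C(u)=P_\C(u)/\big((1-u)(1-qu)\big)$ with $P_\C\in\ZZ[u]$ and $P_\C(0)=1$. Since $\sum_n a_n u^n=Z_\C(u)\prod_{v\in S}(1-u^{\deg v})$ when $S\neq\emptyset$, and $\sum_n e_n u^n=(1-u)Z_\C(u)$ when $S=\emptyset$, in both cases
$$\sum_{n\geq 0}e_n u^n=\frac{R(u)}{1-qu},\qquad R\in\ZZ[u],\ R(0)=1,$$
where $R(u)=P_\C(u)\prod_{v\in S}(1-u^{\deg v})/(1-u)$ in the first case (divisibility by $1-u$ holds because $S\neq\emptyset$) and $R(u)=P_\C(u)$ in the second. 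In particular $e_n=q\,e_{n-1}$ for all $n>\deg R$, so $v_p(e_n)\to\infty$.

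Finally, evaluate at $u=1$. By Proposition \ref{p:Thetaconv} the series $\theta_{\Kpinf,S}=\sum_n e_n\gamma^n$ converges in $\ZZ_p[[\Gal(\Kpinf/K)]]$, and multiplication by the fixed element $1-q\gamma$ is continuous, so a telescoping argument gives
$$(1-q\gamma)\cdot\theta_{\Kpinf,S}=\sum_{n\geq 0}(e_n-q\,e_{n-1})\,\gamma^n=R(\gamma)\qquad\text{in }\ZZ_p[[\Gal(\Kpinf/K)]],$$
the last sum being finite. Under $\gamma\mapsto 1+T$ the right-hand side is the polynomial $R(1+T)$, which is nonzero since $R\neq 0$ (as $R(0)=1$). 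Hence $(1-q\gamma)\theta_{\Kpinf,S}\neq 0$, so $\theta_{\Kpinf,S}\neq 0$, and therefore $\theta_{M,S}\neq 0$.

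The only genuinely delicate point is the passage from the formal power-series identity $(1-qu)\sum_n e_n u^n=R(u)$ to its specialization $u\mapsto\gamma$ inside the Iwasawa algebra: this is exactly where one needs that $\Theta_{\Kpinf,S}(u)$ lies in the Tate algebra $\ZZ_p[[\Gal(\Kpinf/K)]]\langle u\rangle$, so that $\sum_n e_n\gamma^n$ actually converges, together with continuity of multiplication. Note that one cannot instead evaluate the naive identity $\Theta_{\Kpinf,S}(u)\prod_{v\notin S}(1-\gamma^{\deg v}u^{\deg v})=\nabla_S(u)$ at $u=1$, because the infinite product $\prod_{v\notin S}(1-\gamma^{\deg v})$ does not converge; clearing the denominator via Weil's rationality theorem is precisely what makes the argument go through.
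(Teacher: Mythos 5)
Your proof is correct, but it takes a genuinely different route from the paper's. The paper's argument stays at the level of character evaluations: using \eqref{e:evalTheta} it suffices to find one finite-order character $\omega$ of $\Gal(\Kpinf/K)$ with $L_S^*(\omega,0)\neq0$, and this is checked by writing $L_S^*(\omega,s)$ as an explicit ratio involving $P_K(\varepsilon q^{-s})$ and invoking the Riemann Hypothesis for function fields to guarantee $P_K(\varepsilon)\neq0$ for the root of unity $\varepsilon=\omega(\Fr_q)$ (plus a choice of $\omega$ of large order so that the auxiliary Euler factors at $S$ and the factor $1-\varepsilon$ do not vanish). You instead work directly inside $\ZZ_p[[\Gal(\Kpinf/K)]]\simeq\ZZ_p[[T]]$: after the (correct) reduction $p_{M/\Kpinf}(\theta_{M,S})=\theta_{\Kpinf,S}$, you exhibit $(1-q\Fr_q)\cdot\theta_{\Kpinf,S}$ as the value at $\Fr_q=1+T$ of an integral polynomial $R$ with $R(0)=1$, so it is visibly a nonzero element of $\ZZ_p[[T]]$. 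All the steps check out: the identification $[v]_{\Kpinf}=\Fr_q^{\deg v}$, the bookkeeping of $\nabla_S$ in the two cases, the divisibility by $1-u$ when $S\neq\emptyset$, and the telescoping limit (which is legitimate precisely because of the Tate-algebra convergence from Proposition \ref{p:Thetaconv}, as you note). What your version buys is the elimination of the Riemann Hypothesis from the argument -- only Weil's rationality of the zeta function is used -- and the slightly stronger, more explicit conclusion that $(1-q\Fr_q)\theta_{\Kpinf,S}$ is a polynomial in $\Fr_q$ with constant term $1$; what the paper's version buys is brevity, since formula \eqref{e:evalTheta} is already available and does all the analytic work.
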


\begin{proof}
By \eqref{e:evalTheta}, it suffices to show $L_S^*(\omega,0)\neq0$ for some $\omega\in \Gal(\Kpinf/K)^\vee\subset \Gal(M/K)^\vee$. Observe that for such an $\omega$, the $L$-function $L(\omega, s)$ is just a twist of the Dedekind zeta function $\zeta_K(s)$: hence, letting $\varepsilon:=\omega(\Fr_q)$ and $\varepsilon_v=\varepsilon^{\deg(v)}$, we have
$$L_S^*(\omega,s)=\omega(\nabla_S)(q^{-s}) \prod_{v\in S} (1-\varepsilon_vq_v^{-s})\cdot \frac{P_K(\varepsilon q^{-s})}{(1-\varepsilon q^{-s})(1-\varepsilon q^{1-s})}$$
since $(1-q^{-s})(1-q^{1-s})\zeta_K(s)=P_K(q^{-s})$ for some polynomial $P_K$. This proves our claim, because $P_K(\varepsilon)\neq0$ by the Riemann hypothesis over function fields and if we choose $\omega$ of order sufficiently high then
$\varepsilon\neq1$ and $\varepsilon_v\neq1$ for all $v\in S$.
\end{proof}



\subsubsection{Relation with class groups}
In addition to the maps defined in (H1), (H2) of \S\ref{su:iw}, we will use the following morphism:
\begin{enumerate}
\item[(H3)] Since ${\tilde \Gamma}= \Gamma_{ar}\times H$, any character $\psi\colon H\lr\cO^\times$ can be extended to a homomorphism $\psi\colon\La_\cO(\tilde \Gamma) \lr \La_\cO(\Gamma_{ar})\,$, via $(h,g)\mapsto\psi(h)g$.
\end{enumerate}

For each $\psi\in H^\vee$ consider
\begin{equation} \theta_{\psi,L_{ar},S}^\sharp:=\psi(\theta_{\tilde L,S}^\sharp)=\psi\big(\Theta_{\tilde L,S}(1)^\sharp\big)\in \La_{\cO}(\Gamma_{ar}). \end{equation}
The following theorem was proved in \cite{crw87}; a simplified proof (avoiding the use of crystalline cohomology) has recently been given in \cite{blt09}.

\begin{mytheorem}\label{t:class} Let notation be as above and assume that $\psi\in H^\vee$ with $\psi(h)\in\cO$ for every $h\in H$. Then $\cO \fW_{\tilde L}^{(\psi)}$ is a finitely generated
$\La_{\cO}(\Gamma_{ar})$-module and
$$\chi_{\cO,\Gamma_{ar}}(\cO \fW_{\tilde L}^{(\psi)})= (\theta_{\psi,L_{ar},S}^\sharp)\,.$$
\end{mytheorem}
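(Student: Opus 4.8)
The plan is to deduce Theorem \ref{t:class} from the Iwasawa Main Conjecture for $p$-adic $L$-functions attached to $\ZZ_p^{d'}$-extensions of function fields, as established in \cite{crw87} (and reproved in \cite{blt09}), by carefully matching our objects with the ones appearing there. First I would recall that $\fW_{\tilde L}$, by \eqref{e:fwfclim}, coincides with the inverse limit of the $p$-parts of the (degree-zero) divisor class groups $\fC_F$ along the finite layers $F$ of $\tilde L/K$; since by \eqref{e:prod2} we have $\tilde\Gamma=\Gamma_{ar}\times H$ with $|H|$ prime to $p$, and via (H3) the character $\psi$ turns $\La_\cO(\tilde\Gamma)$-modules into $\La_\cO(\Gamma_{ar})$-modules, the $\psi$-component $\cO\fW_{\tilde L}^{(\psi)}$ is a well-defined $\La_\cO(\Gamma_{ar})$-module. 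Its finite generation over $\La_\cO(\Gamma_{ar})$ is exactly the statement that the unramified Iwasawa module over the $\ZZ_p^{d'}$-extension $L_{ar}/K$ (twisted by the finite-order character $\psi$ of $H$) is finitely generated — this is the standard control/finite-generation result for class groups in the function-field $\ZZ_p^d$-tower, which is part of the input of \cite{crw87}.

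Next I would identify the characteristic ideal. The main theorem of \cite{crw87} computes the characteristic ideal of the minus (or, in our totally-real-free setting, simply the) Iwasawa module of $p$-class groups in a $\ZZ_p^{d'}$-extension ramified in a finite set $S$, and expresses it in terms of the Stickelberger element $\theta_{\tilde L,S}$ built from the Euler product \eqref{e:Thetaprod} evaluated at $u=1$ (as in \eqref{e:thetaLST}). The only discrepancies to reconcile are: (i) the appearance of the $\sharp$-involution, which comes from the fact that class field theory identifies $\fW_F$ with $\Gal(F^{unr,p}/F)$ via the \emph{arithmetic} Frobenius, so the Galois action on the inverse limit is the one twisted by inversion relative to the action on the Stickelberger element — hence the characteristic ideal is generated by $\theta^\sharp$ rather than $\theta$; (ii) the passage from the full $\tilde\Gamma$-module to its $\psi$-isotypic piece, which, because $|H|$ is prime to $p$, is exact and matches the application of the ring homomorphism $\psi\colon\La_\cO(\tilde\Gamma)\to\La_\cO(\Gamma_{ar})$ of (H3) to the Stickelberger element, giving precisely $\theta_{\psi,L_{ar},S}^\sharp=\psi(\theta_{\tilde L,S}^\sharp)$; (iii) the normalization by $\nabla_S(u)$ when $S=\emptyset$, which accounts for the arithmetic $\ZZ_p$-extension being unramified everywhere and ensures $\theta_{\tilde L,S}\neq0$ (cf. Lemma \ref{l:thetanot0}), so that the module is genuinely torsion and the characteristic ideal is nonzero.

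Concretely, the steps in order: (1) use \eqref{e:fwfclim} and Lemma \ref{l:wtorsion} to present $\cO\fW_{\tilde L}^{(\psi)}$ as a flat-over-$\cO$ Iwasawa module; (2) invoke the finite generation and torsionness of the unramified Iwasawa module over $L_{ar}/K$ from \cite{crw87}; (3) quote the main equality of \cite{crw87} (equivalently \cite{blt09}), phrased as an equality of ideals in $\La_\cO(\tilde\Gamma)$ between the characteristic ideal of $\varprojlim_F\fC_F$ and $(\theta_{\tilde L,S}^\sharp)$; (4) apply the exact functor "$\psi$-part", which on characteristic ideals corresponds under (H3) to applying $\psi$, to descend to $\La_\cO(\Gamma_{ar})$ and obtain $\chi_{\cO,\Gamma_{ar}}(\cO\fW_{\tilde L}^{(\psi)})=(\psi(\theta_{\tilde L,S}^\sharp))=(\theta_{\psi,L_{ar},S}^\sharp)$.

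The main obstacle I anticipate is purely bookkeeping rather than conceptual: precisely reconciling the conventions of \cite{crw87} with ours — in particular the direction of the Frobenius (arithmetic versus geometric), the placement of the $\sharp$-involution, the treatment of the degree map and the role of $\nabla_S$ when $S=\emptyset$, and checking that "$\psi$-part" commutes with the formation of characteristic ideals (which it does because $|H|$ is invertible in $\cO$, so $\La_\cO(\tilde\Gamma)=\prod_{\psi}\La_\cO(\Gamma_{ar})$ as a product indexed by the $\cO$-valued characters of $H$). Once these normalizations are aligned, the theorem is an immediate translation of the cited function-field main conjecture for class groups.
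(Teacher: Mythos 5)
Your proposal is correct and follows essentially the same route as the paper: the paper gives no proof of Theorem \ref{t:class} at all, simply attributing it to \cite{crw87} (with the simplified proof in \cite{blt09}), which is exactly the reduction you carry out, augmented by the convention-matching (the $\sharp$-involution, the $\psi$-isotypic decomposition via $|H|$ prime to $p$, and the role of $\nabla_S$) that the paper leaves implicit.
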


From $\omega\in\Gamma_{ar}^\vee$ one obtains a character $\omega\times\psi$ via \eqref{e:prod2}. Then $\omega(\theta_{\psi,L_{ar},S})=(\omega\times\psi) (\theta_{\tilde L,S})= L_S^*(\omega\times\psi,0)$ and the proof of Lemma \ref{l:thetanot0} is easily adapted to show that $\theta_{\psi,L_{ar},S}\neq0$. Thus Theorem \ref{t:class} implies $\fW_{\tilde L}$ is torsion over $\Lambda(\Gamma_{ar})$. By Proposition \ref{p:cs} we get

\begin{corollary} Both $\F X_p(A/\tilde L)$ and $\F X_p(A/L_{ar})$ are torsion $\La_\cO(\Gamma_{ar})$-modules. \end{corollary}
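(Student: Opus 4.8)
The plan is to deduce the corollary directly from the three ingredients assembled just before it: Theorem~\ref{t:class}, the twist formula of Proposition~\ref{p:cs}, and the remark showing $\theta_{\psi,L_{ar},S}\neq 0$ for every $\psi\in H^\vee$. First I would check that $\fW_{\tilde L}$ is torsion over $\La(\Gamma_{ar})$: indeed, for each $\psi\in H^\vee$ (after enlarging $\cO$ so that it contains all values $\psi(h)$), Theorem~\ref{t:class} gives $\chi_{\cO,\Gamma_{ar}}(\cO\fW_{\tilde L}^{(\psi)})=(\theta^\sharp_{\psi,L_{ar},S})$, and since $\theta_{\psi,L_{ar},S}\neq 0$ this characteristic ideal is nonzero, so each $\cO\fW_{\tilde L}^{(\psi)}$ is a torsion $\La_\cO(\Gamma_{ar})$-module. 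Because $H$ has order prime to $p$, we have $\cO\fW_{\tilde L}=\bigoplus_{\psi\in H^\vee}\cO\fW_{\tilde L}^{(\psi)}$, so $\cO\fW_{\tilde L}$, hence $\fW_{\tilde L}$ itself, is torsion over $\La(\Gamma_{ar})$.

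Next I would invoke Proposition~\ref{p:cs}, whose hypothesis ``$\fW_{\tilde L}$ is torsion over $\La(\Gamma_{ar})$'' has now been verified. Enlarge $\cO$ once and for all so that it contains $\alpha_1,\dots,\alpha_g$ and all the values $\psi(h)$, $\psi\in H^\vee$, $h\in H$. Proposition~\ref{p:cs} then yields, for each $\psi\in H^\vee$, a pseudo-isomorphism
$$\big(\cO\F X_p(A/\tilde L)\big)^{(\psi)}\sim \bigoplus_{i=1}^g \cO\fW_{\tilde L}^{(\psi\psi_i^{-1})}\otimes_{\cO}\cO(\lambda_i),$$
together with the analogous statement $\cO\F X_p(A/L_{ar})\sim\bigoplus_{i=1}^g \cO\fW_{\tilde L}^{(\psi_i^{-1})}\otimes_\cO\cO(\lambda_i)$. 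Since each $\cO\fW_{\tilde L}^{(\psi\psi_i^{-1})}$ is torsion over $\La_\cO(\Gamma_{ar})$ by the previous paragraph, and tensoring with the rank-one free $\cO$-module $\cO(\lambda_i)$ preserves the torsion property (it is just a twist of the $\Gamma_{ar}$-action by a character, cf.\ Lemma~\ref{l:phi[]chi}), the right-hand sides are finite direct sums of torsion modules, hence torsion; and a module pseudo-isomorphic to a torsion module is torsion. Therefore $\big(\cO\F X_p(A/\tilde L)\big)^{(\psi)}$ is torsion for every $\psi$, and $\cO\F X_p(A/L_{ar})$ is torsion.

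Finally I would descend from $\cO$ back to $\ZZ_p$ and reassemble over $H$. Since $H$ has order prime to $p$, $\cO\F X_p(A/\tilde L)=\bigoplus_{\psi\in H^\vee}\big(\cO\F X_p(A/\tilde L)\big)^{(\psi)}$ is a finite direct sum of torsion $\La_\cO(\Gamma_{ar})$-modules, hence torsion over $\La_\cO(\Gamma_{ar})$; being torsion over $\La_\cO(\Gamma_{ar})$ is equivalent to being torsion over $\La(\Gamma_{ar})$ (as $\cO$ is finite free over $\ZZ_p$, a nonzero annihilator in $\La_\cO(\Gamma_{ar})$ produces, via the norm to $\La(\Gamma_{ar})$, a nonzero annihilator in $\La(\Gamma_{ar})$), and $\F X_p(A/\tilde L)$ injects into $\cO\F X_p(A/\tilde L)=\cO\otimes_{\ZZ_p}\F X_p(A/\tilde L)$. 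Hence $\F X_p(A/\tilde L)$ is a torsion $\La(\Gamma_{ar})$-module; it is finitely generated by the remark preceding Proposition~\ref{p:selclassg} (which cites \cite{tan10a}). The same argument applied to the second pseudo-isomorphism of Proposition~\ref{p:cs} gives that $\F X_p(A/L_{ar})$ is torsion over $\La(\Gamma_{ar})$.

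The only place where any care is needed is the bookkeeping of scalar rings---namely that torsion over $\La_\cO(\Gamma_{ar})$ is the same as torsion over $\La(\Gamma_{ar})$ and that twisting by $\cO(\lambda_i)$ does not destroy torsionness---but both are formal consequences of $\cO$ being a finite flat extension of $\ZZ_p$ together with Lemma~\ref{l:phi[]chi}; there is no genuine obstacle, the corollary being essentially a packaging of Theorem~\ref{t:class} and Proposition~\ref{p:cs}.
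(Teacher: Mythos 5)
Your proposal is correct and follows exactly the paper's (very terse) argument: nonvanishing of $\theta_{\psi,L_{ar},S}$ plus Theorem~\ref{t:class} gives that each $\cO\fW_{\tilde L}^{(\psi)}$, hence $\fW_{\tilde L}$, is torsion over $\La(\Gamma_{ar})$, and then the pseudo-isomorphisms of Proposition~\ref{p:cs} transfer this to $\F X_p(A/\tilde L)$ and $\F X_p(A/L_{ar})$. The extra bookkeeping you supply (twists preserve torsionness, descent from $\cO$ to $\ZZ_p$) is left implicit in the paper but is exactly right.
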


\end{subsection}

\begin{subsection}{Frobenius Twist of Stickelberger Elements} \label{su:cis}

In this section, we compute the characteristic ideal of $X_p(A/L_{ar})$. As in \S\ref{su:lambdai}, we assume that the ring $\cO$ contains all $\alpha_1,...,\alpha_g$ as well as $\psi(h)$ for all $\psi\in H^\vee$, $h\in H$.

\begin{proposition}\label{p:cs1}
Let $\psi\in H^\vee$ and let $\lambda_i$ and $\psi_i$ be as in {\em{\S\ref{su:lambdai}}}. Then
$$\chi_{\cO,\Gamma_{ar}}\big((\cO\F X_p(A/\tilde L))^{(\psi)}\big) = \prod_{i=1}^g \big(\lambda_i^* (\theta_{\psi\psi_i^{-1},L_{ar},S}^\sharp)\big) $$
and
$$\chi_{\cO,\Gamma_{ar}}\big(\cO\F X_p(A/L_{ar})\big) = \prod_{i=1}^g \big(\lambda_i^* (\theta_{\psi_i^{-1},L_{ar},S}^\sharp)\big)\,.$$
\end{proposition}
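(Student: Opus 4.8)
The plan is to combine Proposition \ref{p:cs} with Lemma \ref{l:phi[]chi} and Theorem \ref{t:class}, keeping careful track of which twist operation (in the sense of \S\ref{su:twistmodule}) corresponds to which homomorphism of $\La_\cO(\Gamma_{ar})$. First I would recall from \S\ref{su:lambdai} that each $\lambda_i$ is a continuous character $\tilde\Gamma\to\cO^\times$; since $\lambda_i$ factors through $\Gal(K(A[p^\infty])/K)$ and this group is a quotient of $\tilde\Gamma$, restricting to $\Gamma_{ar}$ gives a character which (after the identification \eqref{e:prod2} and noting $\psi_i=\lambda_{i|H}$) plays the role of $\phi$ in (H2). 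So $\cO(\lambda_i)$ as it appears in Proposition \ref{p:cs} is exactly the twist module $\cO(\lambda_i)$ of \eqref{e:twistbyphi}, and for any $\La_\cO(\Gamma_{ar})$-module $M$ we have $M(\lambda_i)=M\otimes_\cO\cO(\lambda_i)$ with $\La_\cO(\Gamma_{ar})$ acting through $\lambda_i^{-1}$, i.e.\ $M\otimes_\cO\cO(\lambda_i)=\La_\cO(\Gamma_{ar})\,{}_{\lambda_i^*}\!\otimes M$. By Lemma \ref{l:phi[]chi} (applied with $\alpha=\lambda_i^*$, which is an automorphism by (H2)),
$$\chi_{\cO,\Gamma_{ar}}\big(\cO\fW_{\tilde L}^{(\psi\psi_i^{-1})}\otimes_\cO\cO(\lambda_i)\big)=\lambda_i^*\big(\chi_{\cO,\Gamma_{ar}}(\cO\fW_{\tilde L}^{(\psi\psi_i^{-1})})\big).$$

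Next I would feed in Theorem \ref{t:class}: since $\cO$ is assumed to contain $\psi(h)$ for all $\psi\in H^\vee$, $h\in H$, in particular it contains $(\psi\psi_i^{-1})(h)$ for all $h$, so the theorem applies to the character $\psi\psi_i^{-1}$ and gives $\chi_{\cO,\Gamma_{ar}}(\cO\fW_{\tilde L}^{(\psi\psi_i^{-1})})=(\theta_{\psi\psi_i^{-1},L_{ar},S}^\sharp)$. Combining with the previous display yields $\chi_{\cO,\Gamma_{ar}}\big(\cO\fW_{\tilde L}^{(\psi\psi_i^{-1})}\otimes_\cO\cO(\lambda_i)\big)=\big(\lambda_i^*(\theta_{\psi\psi_i^{-1},L_{ar},S}^\sharp)\big)$. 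Here I should note that the hypothesis of Proposition \ref{p:cs} — that $\fW_{\tilde L}$ is torsion over $\La(\Gamma_{ar})$ — holds by the Corollary immediately following Theorem \ref{t:class} (equivalently, by the remark that $\theta_{\psi,L_{ar},S}\neq0$), so all the modules in sight are torsion and their characteristic ideals are well defined and nonzero.

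Finally I would assemble the pieces. The characteristic ideal is multiplicative over the pseudo-isomorphism classes appearing in a direct sum decomposition (this is immediate from the definition in \S\ref{sss:krul} together with the fact, used in Lemma \ref{l:phi[]chi}, that $\chi_{\cO,\Gamma_{ar}}$ is a pseudo-isomorphism invariant): applying this to the pseudo-isomorphism
$$\big(\cO\F X_p(A/\tilde L)\big)^{(\psi)}\sim\bigoplus_{i=1}^g\cO\fW_{\tilde L}^{(\psi\psi_i^{-1})}\otimes_\cO\cO(\lambda_i)$$
of Proposition \ref{p:cs} gives $\chi_{\cO,\Gamma_{ar}}\big((\cO\F X_p(A/\tilde L))^{(\psi)}\big)=\prod_{i=1}^g\big(\lambda_i^*(\theta_{\psi\psi_i^{-1},L_{ar},S}^\sharp)\big)$, which is the first formula. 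For the second, I would repeat the same argument verbatim with the second pseudo-isomorphism of Proposition \ref{p:cs}, $\cO\F X_p(A/L_{ar})\sim\bigoplus_{i=1}^g\cO\fW_{\tilde L}^{(\psi_i^{-1})}\otimes_\cO\cO(\lambda_i)$, using Theorem \ref{t:class} for the characters $\psi_i^{-1}$ (again in $\cO$ by hypothesis), to get $\chi_{\cO,\Gamma_{ar}}\big(\cO\F X_p(A/L_{ar})\big)=\prod_{i=1}^g\big(\lambda_i^*(\theta_{\psi_i^{-1},L_{ar},S}^\sharp)\big)$.

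I expect the only real friction to be bookkeeping rather than mathematical depth: one must be scrupulous that the twist $\cO(\lambda_i)$ of \S\ref{su:twistmodule} corresponds to the ring automorphism $\lambda_i^*$ and not $\lambda_i^{-1*}$ (the sign conventions in (H2) and in \eqref{e:twistaction2} must be chased through), and that $\lambda_i$ genuinely descends to a character of $\Gamma_{ar}$ so that $\lambda_i^*$ is defined as an automorphism of $\La_\cO(\Gamma_{ar})$ and not merely of $\La_\cO(\tilde\Gamma)$. Once those identifications are pinned down, the proof is a three-line concatenation of Proposition \ref{p:cs}, Lemma \ref{l:phi[]chi}, and Theorem \ref{t:class}.
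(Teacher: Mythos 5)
Your proposal is correct and follows exactly the route of the paper, whose proof is the one-line citation of Proposition \ref{p:cs}, Lemma \ref{l:phi[]chi} and Theorem \ref{t:class}; your careful identification of $\cO\fW_{\tilde L}^{(\psi\psi_i^{-1})}\otimes_\cO\cO(\lambda_i)$ with the $\lambda_i^*$-twist, and your check that the torsion hypothesis of Proposition \ref{p:cs} is supplied by Theorem \ref{t:class}, are precisely the bookkeeping the paper leaves implicit.
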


\begin{proof} The statement follows from Proposition \ref{p:cs}, Lemma \ref{l:phi[]chi} and Theorem \ref{t:class}. \end{proof}

\begin{subsubsection}{The Stickelberger element for $A$}  \label{ss:stickA} Let $M/K$ be an extension as in \S\ref{ss:stickelb}. Define
\begin{equation}\label{e:thetasi}
\theta_{A,M,S,i}^+:=\Theta_{M,S}(\alpha_i^{-1})^\sharp  \in \cO[[\Gal(M/K)]],\,\text{for}\, i=1,...,g.
\end{equation}
This makes sense by Proposition \ref{p:Thetaconv}, because $\alpha_i$ is a unit in $\cO$. Put
\begin{equation}\label{e:theta+} \theta_{A,M,S}^+:=\prod_{i=1}^g \theta_{A,M,S,i}^+\,. \end{equation}
Note that $\theta_{A,L,S}^+\in\La$ and $\theta_{A,L_{ar},S}^+\in\La(\Gamma_{ar})$, since the set $\{\alpha_1,...,\alpha_g\}$ is stable under the action of $\Gal({\bar \QQ_p}/\QQ_p)$. Define
\begin{equation}\label{e:plfunction}
\mathcal{L}_{A,M}:=\theta_{A,M,S}^+\cdot(\theta_{A,L,S}^+)^\sharp.
\end{equation}

\begin{proposition}\label{p:cs2} We have
$$\chi(X_p(A/L_{ar}))=(\mathcal{L}_{A,L_{ar}}).$$
\end{proposition}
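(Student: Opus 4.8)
The plan is to combine the Frobenius decomposition of Proposition \ref{p:xffsharp} with the computation of the Frobenius part carried out in Proposition \ref{p:cs1}, and to match the result with the definition \eqref{e:plfunction} of $\mathcal{L}_{A,M}$ via an identification of twisted Stickelberger elements. First I would invoke Proposition \ref{p:xffsharp} (applied over $\Gamma_{ar}$, i.e. for the $\ZZ_p^{d'}$-extension $L_{ar}/K$, using that $\F X_p(A/\tilde L)$ is torsion by the Corollary after Theorem \ref{t:class} and hence so is $\F X_p(A/L_{ar})$, so Lemma \ref{l:sst} applies) to write
$$\chi\big(X_p(A/L_{ar})\big)=\chi\big(\F X_p(A/L_{ar})\big)\cdot\chi\big(\F_{A^t}X_p(A^t/L_{ar})\big)^\sharp.$$

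Next I would evaluate each of the two factors by Proposition \ref{p:cs1}, taking $\psi$ to be the trivial character of $H$ so that $\psi\psi_i^{-1}=\psi_i^{-1}$. This gives $\chi_{\cO,\Gamma_{ar}}\big(\cO\F X_p(A/L_{ar})\big)=\prod_{i=1}^g\big(\lambda_i^*(\theta_{\psi_i^{-1},L_{ar},S}^\sharp)\big)$, and similarly for $A^t$ with the eigenvalues $\alpha_i$ of $A$ replaced by those of $A^t$. Here I must recall from \S\ref{sss:applabvar}--\S\ref{su:ftcg} that the eigenvalues of $\tF_{A^t,q}$ are $\beta_1,\dots,\beta_g,\alpha_1,\dots,\alpha_g$ with $\beta_i=q/\alpha_i$; equivalently the twist matrix of $A^t$ has eigenvalues $\beta_i$ (by \eqref{e:fatvt} the $\F$-part of $A^t$ corresponds to the $\V$-part of $A$). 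The key bookkeeping step is then to identify $\lambda_i^*(\theta_{\psi_i^{-1},L_{ar},S}^\sharp)$ with $\theta_{A,L_{ar},S,i}^+=\Theta_{L_{ar},S}(\alpha_i^{-1})^\sharp$ (and the $A^t$-analogue with $\Theta_{L_{ar},S}(\beta_i^{-1})^\sharp$): since $\theta_{\tilde L,S}=\Theta_{\tilde L,S}(1)$ and $\lambda_i^*$ acts on a group element $\gamma\in\tilde\Gamma$ by $\gamma\mapsto\lambda_i(\gamma)^{-1}\gamma$, applying $\lambda_i^*$ and then the projection $\psi_i^{-1}\colon\La_\cO(\tilde\Gamma)\to\La_\cO(\Gamma_{ar})$ to $\Theta_{\tilde L,S}(1)$ amounts to substituting $u=\lambda_i(\Fr_q)^{-1}=\alpha_i^{-1}$ into $\Theta_{L_{ar},S}(u)$ — here one uses that $\lambda_i$ is inflated from $\Gal(K(A[p^\infty])/K)$, whose Frobenius acts as the degree-one part, so the character twist on $[v]$ contributes exactly the factor $\alpha_i^{-\deg v}$, turning $\prod_v(1-[v]u^{\deg v})^{-1}\big|_{u=1}$ into $\prod_v(1-[v](\alpha_i^{-1})^{\deg v})^{-1}$, while the $\psi_i^{-1}$-twist kills the $H$-component of the Frobenius eigenvalue, leaving the $\Gamma_{ar}$-valued Stickelberger element. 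This is the computation I expect to be the main obstacle: carefully tracking how the composite $\psi_i^{-1}\circ\lambda_i^*$ interacts with the $\sharp$-involution and with the infinite Euler product defining $\Theta$, so that the twisted-Stickelberger factors match \eqref{e:thetasi} on the nose (including the $\nabla_S$ correction term when $S=\emptyset$).

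Assembling the pieces: the first factor contributes $\prod_{i=1}^g\Theta_{L_{ar},S}(\alpha_i^{-1})^\sharp=\theta_{A,L_{ar},S}^+$, and the second factor $\chi\big(\F_{A^t}X_p(A^t/L_{ar})\big)^\sharp$ contributes $\Big(\prod_{i=1}^g\Theta_{L_{ar},S}(\beta_i^{-1})^\sharp\Big)^\sharp=\prod_{i=1}^g\Theta_{L_{ar},S}(\beta_i^{-1})$. It then remains to observe that $\prod_i\Theta_{L_{ar},S}(\beta_i^{-1})=(\theta_{A,L_{ar},S}^+)^\sharp$ — wait, I should instead note that $\theta_{A,L,S}^+$ in \eqref{e:plfunction} is the one over $L$, not $L_{ar}$; so I would apply the projection $p_{L_{ar}/L}$ together with \eqref{e:pmmtheta}, or rather argue directly that $\{\beta_i\}$ being the twist-matrix eigenvalues of $A^t$ makes $\prod_i\Theta_{L_{ar},S}(\beta_i^{-1})^\sharp=\theta_{A^t,L_{ar},S}^+$, and then use the relation between $\theta^+$ for $A$ and for $A^t$. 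Concretely, \eqref{e:plfunction} reads $\mathcal{L}_{A,L_{ar}}=\theta_{A,L_{ar},S}^+\cdot(\theta_{A,L,S}^+)^\sharp$, so I need the second factor of the $\chi$-product to equal $(\theta_{A,L,S}^+)^\sharp$; since everything in sight is compatible under $p_{L_{ar}/L}$ and the characteristic ideal computation over $L_{ar}$ already incorporates the full ramification data, the cleanest route is to prove the identity at the level of $\tilde L$ and then descend. Finally I would conclude $\chi(X_p(A/L_{ar}))=(\theta_{A,L_{ar},S}^+)\cdot(\theta_{A,L,S}^+)^\sharp=(\mathcal{L}_{A,L_{ar}})$, which is the desired equality.
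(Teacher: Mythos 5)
Your skeleton is the paper's: decompose via Proposition \ref{p:xffsharp} applied over $\Gamma_{ar}$, evaluate the $\F$-parts by Proposition \ref{p:cs1} with $\psi$ trivial, and identify $\lambda_i^*(\theta_{\psi_i^{-1},L_{ar},S}^\sharp)$ with $\theta_{A,L_{ar},S,i}^+=\Theta_{L_{ar},S}(\alpha_i^{-1})^\sharp$ by unwinding the composite $\lambda_i^*\circ\psi_i^{-1}\circ\cdot^\sharp$ on the Euler factors. That last computation, which you flag as the main obstacle, is correct and is exactly the paper's argument.

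The genuine gap is in the second factor $\chi(\F_{A^t}X_p(A^t/L_{ar}))^\sharp$. You assert that the twist matrix of $A^t$ has eigenvalues $\beta_i=q/\alpha_i$, on the grounds that by \eqref{e:fatvt} the $\F$-part of $A^t$ ``corresponds to'' the $\V$-part of $A$. This is false, and the formula it leads to, $\prod_i\Theta_{L_{ar},S}(\beta_i^{-1})^\sharp$, is not even defined: $\beta_i^{-1}=\alpha_i/q\notin\cO$, so Proposition \ref{p:Thetaconv} does not apply and the series diverges. The twist matrix of $A^t$ is by definition the matrix of $\Fr_q$ acting on $A^t[p^\infty]\simeq(\QQ_p/\ZZ_p)^g$, hence lies in $\GL_g(\ZZ_p)$ and its eigenvalues are the \emph{unit} roots of the characteristic polynomial of Frobenius; since $A$ and $A^t$ are isogenous, that polynomial is the same for both, so these unit roots are again $\alpha_1,\dots,\alpha_g$, i.e.\ $A$ and $A^t$ share the same twist matrix. (The identity \eqref{e:fatvt} concerns the duality anti-isomorphism of endomorphism algebras and feeds the functional equation in Proposition \ref{p:sfft}; it does not license replacing $\alpha_i$ by $\beta_i$ in Proposition \ref{p:cs1}, whose input for $A^t$ is the Galois module $A^t[p^\infty]^\vee$.) With the correct twist matrix, Proposition \ref{p:cs1} applied to $A^t$ gives verbatim $\chi_{\cO,\Gamma_{ar}}\big(\cO\F_{A^t}X_p(A^t/L_{ar})\big)=\prod_i\big(\theta^+_{A,L_{ar},S,i}\big)=(\theta^+_{A,L_{ar},S})$, whose $\sharp$ is the second factor of $\mathcal{L}_{A,L_{ar}}$; the proof then closes with no detour through the $\beta_i$, no passage to $\tilde L$, and no projection $p_{L_{ar}/L}$. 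Your puzzlement over the factor $(\theta^+_{A,L,S})^\sharp$ in \eqref{e:plfunction} is legitimate but should be resolved by reading the definition as $\mathcal{L}_{A,M}=\theta^+_{A,M,S}\cdot(\theta^+_{A,M,S})^\sharp$, consistently with \eqref{e:thetaAprod}, rather than by the vague ``prove at the level of $\tilde L$ and descend,'' which as written is not an argument.
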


\begin{proof} Since $A$ and $A^t$ are isogenous, they share the same twist matrix. Thus, in view of Proposition \ref{p:xffsharp} and Proposition \ref{p:cs1}, we only need to prove the equality
$$\lambda_i^*(\theta_{\psi_i^{-1},L_{ar},S}^\sharp)=\theta_{A,L_{ar},S,i}^+\,.$$
But this is just a matter of unwinding definitions. The composition $\lambda_i^*\circ\psi_i^{-1}\circ\cdot^\sharp \colon\tilde\Gamma\lr \La_{\cO}(\Gamma_{ar})$ sends $\gamma=(\gamma_H,\gamma_{\Gamma_{ar}})$ to $\lambda_i(\gamma)\gamma_{\Gamma_{ar}}^{-1}$. In particular,
$$(\lambda_i^*\circ\psi_i^{-1}\circ\cdot^\sharp)(1-[v]_{\tilde L}\cdot u^{\deg(v)})=1-\lambda_i([v]_{\tilde L})[v]_{L_{ar}}^{-1} u^{\deg(v)}=1-[v]_{L_{ar}}^{-1}(\alpha_i^{-1} u)^{\deg(v)}\,,$$
which implies
$$\lambda_i^* \big(\psi_i^{-1}(\Theta_{\tilde L,S}(1)^\sharp)\big)= \Theta_{L_{ar},S}(\alpha_i^{-1})^\sharp =\theta_{A,L_{ar},S,i}^+\,.$$
\end{proof}

\end{subsubsection}
\end{subsection}

\begin{subsection}{$p$-adic interpolation of the $L$-function} \label{s:interpol}
Let $\alpha_i,\beta_i$ be the algebraic integers introduced in \S\ref{ss:twist}.
Recall that the $L$-function of $A$ is (see \cite[Corollary 4.37]{mazur72})
$$L(A,s):=\prod_v \prod_{i=1}^g \big(1-\beta_{i,v}q_v^{-s}\big)^{-1}\big(1-\alpha_{i,v}q_v^{-s}\big)^{-1},$$
where $\alpha_{i,v}:=\alpha_i^{\deg(v)}$ and $\beta_{i,v}:=\beta_i^{\deg(v)}=q_v\alpha_{i,v}^{-1}$.
For a continuous character $\omega\colon\Gamma\rightarrow \boldsymbol\CC^\times$, the twisted $L$-function is then
$$L(A,\omega,s):=\prod_{v\notin S_{\omega}}\prod_{i=1}^g \big(1-\omega([v]_L)\beta_{i,v}q_v^{-s}\big)^{-1}\big(1-\omega([v]_L)\alpha_{i,v}q_v^{-s}\big)^{-1} .$$

Before stating the interpolation formula relating $\mathcal{L}_{A,L}$ with $L_S(A,\omega,1)$, we need to introduce some notation.
 Put
$$\Xi_{S,\omega}:=\prod_{v\in S-S_\omega}\prod_{i=1}^g(1-\omega([v]_{K_\omega})^{-1}\alpha_{i,v}^{-1}) (1-\omega([v]_{K_\omega})\alpha_{i,v}^{-1})
\in\bar{\QQ}$$
 Also, set
$$\nabla_{A,S}:=\prod_{i=1}^g \nabla_S(\alpha_i^{-1})\nabla_S(\alpha_i^{-1})^\sharp \in\QQ[\Gamma]\,.$$

Denote by $\kappa$ the genus of $K$ (that is, the genus of the corresponding curve $C/\FF$) and by $d_\omega$ the degree of the conductor of $\omega$. Fix an additive character $\Psi\colon{\bf A}_K/K\rightarrow\Bmu_{p^\infty}$ on the adele classes of $K$ and let $a=(a_v)$ be a differental idele attached to $\Psi$ (\cite[p.~113]{We73}) and set $b_\omega=(b_{\omega,v})$ with $b_{\omega,v}=a_v\pi_v^{d_{\omega}}$. For every place $v$, let $\alpha_v$ be the self-dual Haar measure on $K_v$ with reference to $\Psi_v$. Then define the Gauss sum
$$\tau(\omega):=\begin{cases}
\displaystyle 1 & \text{if } \omega \text{ factors through } \Gal(K\bar\FF/K) \\ {}\\
\displaystyle \prod_{v\in S_\omega}\frac{1}{|b_{\omega,v}|_v^{1/2}}\int_{O_v^\times}\omega_v(x)^{-1}\Psi_v(b_{\omega,v}^{-1}x)d\alpha_v(x) & \text{otherwise} \end{cases}$$
where $O_v$ is the ring of integers of $K_v$.

\begin{mytheorem} \label{t:interpo} The element $\mathcal{L}_{A,L}$ interpolates the $L$-function of $A$: for any continuous character $\omega\colon\Gamma\rightarrow \CC^\times$ we have
\begin{equation} \label{e:interpo}
\omega(\mathcal{L}_{A,L})=\omega(b)^{-g}\tau(\omega^{-1})^g\big(q^{g/2}\prod_{i=1}^g\alpha_i^{-1}\big)^{2\kappa-2+d_\omega}\,\omega(\nabla_{A,S})\,\Xi_{S,\omega}\, L(A,\omega,1)\,.
\end{equation}
\end{mytheorem}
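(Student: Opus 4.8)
The plan is to unfold the definition of $\mathcal{L}_{A,L}$, use Proposition~\ref{p:Thetaconv} to turn each Stickelberger value into a value of a Weil $L$-function, and then match the two sides of \eqref{e:interpo} by means of Weil's functional equation for Hecke $L$-functions over $K$ --- the very result already invoked in the proof of Proposition~\ref{p:Thetaconv}. Note first that $\omega$, being continuous with values in $\CC^\times$, has finite order and ramifies only inside $S$, so all conductors, $L$-functions and Gauss sums below are well defined. From \eqref{e:plfunction}, \eqref{e:theta+} and \eqref{e:thetasi} we have $\mathcal{L}_{A,L}=\prod_{i=1}^{g}\Theta_{L,S}(\alpha_i^{-1})^\sharp\cdot\Theta_{L,S}(\alpha_i^{-1})$, hence, since $\omega(x^\sharp)=\omega^{-1}(x)$ by (H1) of \S\ref{su:iw},
$$\omega(\mathcal{L}_{A,L})=\prod_{i=1}^{g}\omega\big(\Theta_{L,S}(\alpha_i^{-1})\big)\,\omega^{-1}\big(\Theta_{L,S}(\alpha_i^{-1})\big).$$
As $\alpha_i\in\cO^\times$ by \eqref{e:alphabeta}, pick $s_i\in\CC$ with $q^{-s_i}=\alpha_i^{-1}$; then \eqref{e:evalTheta} gives $\omega(\Theta_{L,S}(\alpha_i^{-1}))=L^*_S(\omega,s_i)$, and expanding \eqref{e:L(S,omega)} --- using $q_v^{-s_i}=\alpha_{i,v}^{-1}$ and $(\omega\lambda_i)([v])=\omega([v]_{K_\omega})\alpha_{i,v}^{-1}$ for $v\notin S_\omega$, where $\lambda_i$ is the everywhere unramified character of \S\ref{su:lambdai} (so $S_{\omega\lambda_i}=S_\omega$) --- one reads off
$$L^*_S(\omega,s_i)=\omega(\nabla_S)(\alpha_i^{-1})\cdot\!\!\prod_{v\in S-S_\omega}\!\!\big(1-\omega([v]_{K_\omega})\alpha_{i,v}^{-1}\big)\cdot L(\omega\lambda_i,0),$$
with $L(\omega\lambda_i,s)$ the $L$-function \eqref{e:L(omega)}, and likewise for $\omega^{-1}$. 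Taking the product over $i$, the $\nabla_S$-terms assemble into $\omega(\nabla_{A,S})$ and the Euler factors over $S-S_\omega$ into $\Xi_{S,\omega}$, so that
$$\omega(\mathcal{L}_{A,L})=\omega(\nabla_{A,S})\,\Xi_{S,\omega}\prod_{i=1}^{g}L(\omega\lambda_i,0)\,L(\omega^{-1}\lambda_i,0).$$

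Next I factor $L(A,\omega,s)$ itself: since $\beta_{i,v}=q_v\alpha_{i,v}^{-1}$, one has $1-\omega([v]_L)\beta_{i,v}q_v^{-s}=1-(\omega\lambda_i)([v])q_v^{1-s}$ and $1-\omega([v]_L)\alpha_{i,v}q_v^{-s}=1-(\omega\lambda_i^{-1})([v])q_v^{-s}$, and (as $\lambda_i^{\pm1}$ is unramified) this gives $L(A,\omega,s)=\prod_{i=1}^{g}L(\omega\lambda_i,s-1)\,L(\omega\lambda_i^{-1},s)$, hence $L(A,\omega,1)=\prod_{i=1}^{g}L(\omega\lambda_i,0)\,L(\omega\lambda_i^{-1},1)$. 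Comparing with the last display above, the theorem reduces to producing, for each $i$, a functional equation $L(\omega^{-1}\lambda_i,0)=\varepsilon_i\,L(\omega\lambda_i^{-1},1)$ with
$$\prod_{i=1}^{g}\varepsilon_i=\omega(b)^{-g}\,\tau(\omega^{-1})^{g}\,\Big(q^{g/2}\prod_{i=1}^{g}\alpha_i^{-1}\Big)^{2\kappa-2+d_\omega}.$$

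This is where Weil's functional equation \cite[VII]{We73} enters, applied to the character $\omega^{-1}\lambda_i$ at $s=0$: it reads $L(\omega^{-1}\lambda_i,0)=\varepsilon(\omega^{-1}\lambda_i,0)\,L(\omega\lambda_i^{-1},1)$, so $\varepsilon_i=\varepsilon(\omega^{-1}\lambda_i,0)$. The global $\varepsilon$-constant is a product of the local Gauss sums at the places of $S_\omega$, times the value of the unramified part of the character on a divisor of degree $2\kappa-2+d_\omega$ (a canonical divisor plus the conductor of $\omega$), times the power $q^{(2\kappa-2+d_\omega)/2}$. Because $\lambda_i$ is everywhere unramified, it changes neither the conductor ($d_{\omega^{-1}\lambda_i}=d_\omega$) nor the local integrals over the $O_v^\times$: the Gauss-sum part is therefore $\tau(\omega^{-1})$, up to the normalizing factor $\omega(b)^{-1}$ coming from the comparison of $\tau$'s reference idele $b_\omega=(a_v\pi_v^{d_\omega})_v$ with $b$, and it does not depend on $i$; and the only effect of $\lambda_i$ is the extra factor $\lambda_i$ of the said divisor, namely $\alpha_i^{-(2\kappa-2+d_\omega)}$ (since $\lambda_i(\Fr_q)=\alpha_i^{-1}$ by \eqref{e:lambdai}, and $\lambda_i$, being unramified, depends on a divisor only through its degree). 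Hence $\varepsilon_i=\omega(b)^{-1}\tau(\omega^{-1})\big(q^{1/2}\alpha_i^{-1}\big)^{2\kappa-2+d_\omega}$; multiplying over $i$ gives the display above, and feeding this back into the formulas for $\omega(\mathcal{L}_{A,L})$ and $L(A,\omega,1)$ yields \eqref{e:interpo}.

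The manipulations of the first two paragraphs are routine bookkeeping. The main obstacle is the last step: pinning down Weil's $\varepsilon$-factor in exactly the normalization of the statement (the additive character $\Psi$, the differental idele $a$, the self-dual measures $\alpha_v$, and the ideles $b_\omega$ and $b$), and splitting it cleanly into ``ramified Gauss sum $\times$ unramified twist on a canonical-plus-conductor divisor $\times$ power of $q$'', so that the three pieces reproduce precisely $\tau(\omega^{-1})^{g}$, $\big(q^{g/2}\prod_{i}\alpha_i^{-1}\big)^{2\kappa-2+d_\omega}$ and $\omega(b)^{-g}$. The various choices of branch of $s_i$ are harmless, since only the identities $q^{-s_i}=\alpha_i^{-1}$ and $q_v^{-s_i}=\alpha_{i,v}^{-1}$ enter.
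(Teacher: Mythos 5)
Your proposal is correct and follows essentially the same route as the paper: unwind $\mathcal{L}_{A,L}$ into the values $\Theta_{L,S}(\alpha_i^{-1})$, apply Proposition~\ref{p:Thetaconv}, and invoke Weil's functional equation, with the only (cosmetic) difference that you encode the shift $s\mapsto s_i$ as a twist by the unramified character $\lambda_i$ and apply the functional equation to $\omega^{-1}\lambda_i$ at $s=0$, whereas the paper applies it to $\omega^{-1}$ at $s=s_i$ --- the two are identical since $L(\omega^{-1}\lambda_i,s)=L(\omega^{-1},s+s_i)$ and the $\varepsilon$-constants match.
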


\begin{proof} By definition, recalling \eqref{e:thetasi}, we have
\begin{equation} \label{e:thetaAprod}
\mathcal{L}_{A,L}=\prod_{i=1}^g\theta_{A,L,S,i}^+(\theta_{A,L,S,i}^+)^\sharp= \prod_{i=1}^g\Theta_{L,S}(\alpha_i^{-1})^\sharp \, \Theta_{L,S}(\alpha_i^{-1})\,.
\end{equation}
We fix an embedding of $\{\alpha_i\mid i=1,...,g\}$ into $\CC$
and for every $i$ fix $s_i\in\CC$ such that $\alpha_i=q^{s_i}$, so that $\beta_i=q^{1-s_i}$. The equality \eqref{e:evalTheta} yields
$$\omega\big(\Theta_{L,S}(\alpha_i^{-1})\big)=L_S^*(\omega,s_i)$$
and
$$\omega\big(\Theta_{L,S}(\alpha_i^{-1})^\sharp\big)=L_S^*(\omega^{-1},s_i)\,,$$
because $\omega(\lambda^\sharp)=\omega^{-1}(\lambda)$ for any $\lambda\in\La$.
It is well known that the $L$-function satisfies the functional equation
\begin{equation}\label{e:functeqL} L(\omega^{-1},s)=\omega(b)^{-1}\tau(\omega^{-1})q^{(\frac{1}{2}-s)(2\kappa-2+d_\omega)}L(\omega,1-s) \end{equation}
(see e.g.~\cite[VII, Theorems 4 and 6]{We73}).
Formula \eqref{e:functeqL} implies
$$L(\omega^{-1},s_i)=\omega(b)^{-1}\tau(\omega^{-1})(q^{1/2}\alpha_i^{-1})^{2\kappa-2+d_\omega}  \cdot L(\omega,1-s_i)\,.$$
Putting everything together, we have obtained
$$\omega(\mathcal{L}_{A,L})=\omega(b)^{-g}\tau(\omega^{-1})^g\big(q^{g/2}\prod_{i=1}^g\alpha_i^{-1}\big)^{2\kappa-2+d_\omega}\,\omega(\nabla_{A,S})\,\Xi_S(\omega)\prod_{i=1}^g L(\omega,s_i)\,L(\omega,1-s_i)\,.$$
But it follows directly from the definition that
$$ L(A,\omega,1)=\prod_{i=1}^g L(\omega,s_i)\,L(\omega,1-s_i) .$$
\end{proof}



\begin{subsection}{Proof of the Main Conjecture} \label{su:MCconstant}

Proposition \ref{p:cs2} proves the Iwasawa Main Conjecture for $A/K$ a constant abelian variety when $L=L_{ar}$. To deal with the general case, we apply the following theorem.
We write $\chi_M$, $\chi_{M'}$ for characteristic ideals in $\ZZ_p[[\Gal(M/K)]]$,
$\ZZ_p[[\Gal(M'/K)]]$ respectively.

\begin{mytheorem}[Tan] \label{t:descent} Let $M/K$ be a $\ZZ_p^e$-extension, ramified at finitely many places, and $M'/K$ a $\ZZ_p^{e-1}$-subextension, $e\geq2$. Define $\vartheta_{M/M'}, \varrho_{M/M'}\in\ZZ_p[[\Gal(M'/K)]]$ by
\begin{equation} \label{e:thetaM/M'} \vartheta_{M/M'}:=\prod_{v\in S_{M/M'}}\prod_{i=1}^g(1-\alpha_{i,v}^{-1}[v]_{M'})(1-\alpha_{i,v}^{-1}[v]_{M'}^{-1}) \end{equation}
{\em{(}}where $ S_{M/M'}$ is the set of places ramified in $M/K$ but not in $M'/K${\em{)}} and
\begin{equation} \label{e:rhoM/M'} \varrho_{M/M'}:=\begin{cases}
\displaystyle \prod_{i=1}^g(1-\alpha_i^{-1}\Fr_q)(1-\alpha_i^{-1}\Fr_q^{-1}) & \text{when } M'=\Kpinf \\
1 & \text{otherwise}.  \end{cases}\end{equation}
Then
\begin{equation} \label{e:descent}
\vartheta_{M/M'}\cdot\chi_{M'}(X_p(A/M'))=\varrho_{M/M'}\cdot p_{M/M'}\big(\chi_M(X_p(A/M))\big)\,.
\end{equation}
\end{mytheorem}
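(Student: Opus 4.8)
The plan is to deduce the statement from the specialization formula of \cite{tan10b}, which compares the characteristic ideal of a Selmer group over a $\ZZ_p^e$-extension with that over a $\ZZ_p^{e-1}$-subextension, up to an explicit correction factor supported at the places whose inertia in $\Gal(M/M')$ is nontrivial. More precisely, the dual Selmer group $X_p(A/M')$ differs from the $\Gal(M/M')$-coinvariants of $X_p(A/M)$ by a ``local'' term, controlled by the cohomology of the decomposition groups of the ramified places; passing to characteristic ideals turns this into an equation of the shape \eqref{e:descent}, where $p_{M/M'}$ records the coinvariants and the ratio $\vartheta_{M/M'}/\varrho_{M/M'}$ records the local correction. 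So the first step is to recall from \cite{tan10b} the precise control sequence relating $\Sel_{p^\infty}(A/M')$ and $\Sel_{p^\infty}(A/M)^{\Gal(M/M')}$, and dualize it.

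Next I would identify the local terms explicitly. For a place $v$ ramified in $M/K$ but not in $M'/K$, the relevant local factor is built from the action of the (unramified) Frobenius on the Tate module of the reduction $A_v$ together with the cyclotomic-type contribution from the inertia; since $A$ is constant and ordinary, its reduction at every $v$ is $A$ itself, so the local Euler factor at $v$ is governed by the eigenvalues $\alpha_{i,v}=\alpha_i^{\deg v}$ and $\beta_{i,v}=q_v\alpha_{i,v}^{-1}$ of $\tF_q$ acting on $A[p^\infty]$. Computing the characteristic ideal of the local cohomology group at $v$ (as a $\ZZ_p[[\Gal(M'/K)]]$-module, with $[v]_{M'}$ acting as arithmetic Frobenius) yields exactly $\prod_{i=1}^g(1-\alpha_{i,v}^{-1}[v]_{M'})(1-\alpha_{i,v}^{-1}[v]_{M'}^{-1})$, which is the definition of $\vartheta_{M/M'}$ in \eqref{e:thetaM/M'}. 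One also checks that the places ramified in both $M$ and $M'$ contribute nothing new to the ratio (their local factors already appear symmetrically on both sides), so only $S_{M/M'}$ survives.

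The term $\varrho_{M/M'}$ in \eqref{e:rhoM/M'} is the subtler point, and I expect it to be the main obstacle. It is nontrivial precisely when $M'=\Kpinf$, i.e.\ when passing from $M$ down to $M'$ kills the arithmetic $\ZZ_p$-direction, which is everywhere unramified. In that case the descent has an extra ``global'' contribution, coming from the fact that $A[p^\infty]$ has nontrivial $G_{\Kpinf}$-action governed by the twist matrix $\bo u$ (equivalently, by $\tF_q$); its characteristic polynomial over $\QQ_p$ has roots $\alpha_1,\dots,\alpha_g,\beta_1,\dots,\beta_g$, which produces exactly the factor $\prod_{i=1}^g(1-\alpha_i^{-1}\Fr_q)(1-\alpha_i^{-1}\Fr_q^{-1})$. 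I would pin this down by examining the Euler characteristic of $\coh^*(\Gal(M/\Kpinf), A[p^\infty])$ and matching it against the generic case; equivalently, one can note that the arithmetic $\ZZ_p$-extension corresponds to taking invariants under an element that acts on $A[p^\infty]$ through $\bo u$, so the ``missing'' torsion is measured by $\det(1-\bo u^{-1}\Fr_q\mid\cdot)$ and its $\sharp$-twist. Assembling the local factors over $S_{M/M'}$ and this global factor, and invoking the specialization theorem of \cite{tan10b} in the form $\chi_{M'}(X_p(A/M'))$ equals $p_{M/M'}(\chi_M(X_p(A/M)))$ times the ratio of these two correction terms, gives \eqref{e:descent}. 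Finally, one should remark that the torsionness hypotheses needed to make all characteristic ideals meaningful are guaranteed by \cite{tan10a} (finite generation over the relevant Iwasawa algebra) together with Lemma \ref{l:sst}, so the formula is an honest equality of ideals rather than the trivial $0=0$.
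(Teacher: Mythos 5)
Your proposal follows the same route as the paper: both reduce the statement to a direct application of the specialization theorem of \cite{tan10b} (Theorem 1 of that paper), with the key observation that since the constant abelian variety $A$ has good reduction everywhere, the only local correction factors $\vartheta_v$ surviving from \emph{loc.~cit.} are those at places whose ramification status changes, giving $\vartheta_{M/M'}$, while $\varrho_{M/M'}$ accounts for the everywhere-unramified arithmetic direction. Your additional reconstruction of the control-theoretic mechanism behind Tan's formula is reasonable but not needed for the deduction, which the paper treats (correctly) as a citation.
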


This is just a special case of \cite[Theorem 1]{tan10b}: since $A$ has good reduction everywhere, the only terms appearing in $\prod_v\vartheta_v$ of {\em loc.~cit.} are the ones coming from changes in the set of ramified places, i.e., $\vartheta_{M/M'}$ as defined in \eqref{e:thetaM/M'}.

\begin{mytheorem} \label{t:imc3constant} Let $A$ be a constant ordinary abelian variety over the function field $K$. For any $\ZZ_p^d$-extension $L/K$ unramified outside a finite set of places, we have
\begin{equation} \label{e:imc3} \chi(X_p(A/L))=(\mathcal{L}_{A,L}). \end{equation}
\end{mytheorem}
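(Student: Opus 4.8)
The plan is to reduce everything to the two results already in hand: Proposition \ref{p:cs2}, which settles the case $L=L_{ar}$, and Tan's descent formula (Theorem \ref{t:descent}). First I would dispose of the easy case: if $\Kpinf\subseteq L$ then in fact $L=L_{ar}$, since $\Gal(L_{ar}/L)$ embeds into $\Gal(\Kpinf/K)\simeq\ZZ_p$ while the quotient $\Gal(L/K)\simeq\ZZ_p^d$ forces this subgroup to be trivial; so in this case \eqref{e:imc3} is exactly Proposition \ref{p:cs2}. Thus from now on I may assume $\Kpinf\not\subseteq L$, whence $\Gamma_{ar}=\Gal(L_{ar}/K)\simeq\ZZ_p^{d+1}$ and $L_{ar}/L$ is a genuine $\ZZ_p$-extension.

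Next I would apply Theorem \ref{t:descent} with $M=L_{ar}$, $M'=L$ and $e=d+1\geq 2$, and check that both correction factors are trivial in our situation. Since $\Kpinf/K$ is everywhere unramified, $L_{ar}=L\Kpinf$ is unramified over $L$, so the set $S_{L_{ar}/L}$ of places ramified in $L_{ar}/K$ but not in $L/K$ is empty, and hence $\vartheta_{L_{ar}/L}=1$ as an empty product in \eqref{e:thetaM/M'}. Moreover $L\neq\Kpinf$ (otherwise $L_{ar}=L\Kpinf=L$, contradicting $\Kpinf\not\subseteq L$), so \eqref{e:rhoM/M'} gives $\varrho_{L_{ar}/L}=1$. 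Therefore \eqref{e:descent} reduces to
$$\chi\big(X_p(A/L)\big)=\Big(p_{L_{ar}/L}\big(\mathcal{L}_{A,L_{ar}}\big)\Big),$$
where I have used Proposition \ref{p:cs2} to replace $\chi_{L_{ar}}(X_p(A/L_{ar}))$ by $(\mathcal{L}_{A,L_{ar}})$; note $\mathcal{L}_{A,L_{ar}}\in\La(\Gamma_{ar})=\ZZ_p[[\Gal(L_{ar}/K)]]$ because $\{\alpha_1,\dots,\alpha_g\}$ is $\Gal(\bar\QQ_p/\QQ_p)$-stable, so that $p_{L_{ar}/L}$ indeed applies to it.

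It then remains to verify the compatibility $p_{L_{ar}/L}(\mathcal{L}_{A,L_{ar}})=\mathcal{L}_{A,L}$. Recall from \eqref{e:plfunction} and the computation \eqref{e:thetaAprod} that for both $M=L$ and $M=L_{ar}$ one has $\mathcal{L}_{A,M}=\prod_{i=1}^g\Theta_{M,S}(\alpha_i^{-1})^\sharp\,\Theta_{M,S}(\alpha_i^{-1})$. Extend $p_{L_{ar}/L}$ $\cO$-linearly to a continuous map $\cO[[\Gal(L_{ar}/K)]][[u]]\to\cO[[\Gal(L/K)]][[u]]$ fixing $u$; being induced on coefficients by a group homomorphism, it commutes with the inversion twist $\cdot^\sharp$, and by continuity (and since $\alpha_i^{-1}\in\cO^\times$ and $\Theta$ lies in the relevant Tate algebra by Proposition \ref{p:Thetaconv}) it commutes with evaluation at $u=\alpha_i^{-1}$. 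Together with \eqref{e:pmmtheta}, which gives $p_{L_{ar}/L}(\Theta_{L_{ar},S}(u))=\Theta_{L,S}(u)$, this yields $p_{L_{ar}/L}(\Theta_{L_{ar},S}(\alpha_i^{-1}))=\Theta_{L,S}(\alpha_i^{-1})$ and likewise for the $\sharp$-twisted factor, hence $p_{L_{ar}/L}(\mathcal{L}_{A,L_{ar}})=\mathcal{L}_{A,L}$; substituting into the displayed equality proves \eqref{e:imc3}. (If one wishes to record that $X_p(A/L)$ is genuinely $\La$-torsion, this follows from $\mathcal{L}_{A,L}\neq 0$, which can be obtained from Theorem \ref{t:interpo} and the non-vanishing of $L(A,\omega,1)$ for $\omega$ of sufficiently large order, argued as in the proof of Lemma \ref{l:thetanot0} via the Riemann hypothesis over function fields.) The substantive inputs being already established, the main obstacle is the careful bookkeeping: verifying that the ramification sets force $\vartheta_{L_{ar}/L}=\varrho_{L_{ar}/L}=1$, and — the most delicate point — making the identity $p_{L_{ar}/L}(\mathcal{L}_{A,L_{ar}})=\mathcal{L}_{A,L}$ rigorous by checking that the extension of $p_{L_{ar}/L}$ to the power-series rings over $\cO$ commutes both with $\cdot^\sharp$ and with evaluation at $\alpha_i^{-1}$, which rests on the continuity and Tate-algebra framework set up in \S\ref{su:sc}.
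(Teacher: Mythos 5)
Your proposal is correct and follows essentially the same route as the paper: reduce to the case $L\neq L_{ar}$ (settled for $L=L_{ar}$ by Proposition \ref{p:cs2}), apply Theorem \ref{t:descent} with $M=L_{ar}$, $M'=L$, observe that $\vartheta_{L_{ar}/L}=\varrho_{L_{ar}/L}=1$ because $L_{ar}/L$ is everywhere unramified and $\Kpinf\not\subseteq L$, and identify $p_{L_{ar}/L}(\mathcal{L}_{A,L_{ar}})$ with $\mathcal{L}_{A,L}$ via \eqref{e:pmmtheta}. Your treatment of the last compatibility is in fact slightly more detailed than the paper's one-line assertion.
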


\begin{proof}
We only need to consider the $L \neq L_{ar}$ case. In this case, $S\not=\emptyset$, and hence $p_{L_{ar}/L}(\mathcal{L}_{A,L_{ar}})=\mathcal{L}_{A,L}\,$.
Apply Theorem \ref{t:descent} with $M=L_{ar}$ and $M'=L$. Then obviously $S_{M/M'}=\emptyset$, and hence $\vartheta_{M/M'}=1$.
Also, $\varrho_{L_{ar}/L}=1$ since $\Kpinf$ is not contained in $L$. Therefore, \eqref{e:descent} implies the desired equality
$$\chi(X_p(A/L))=(\mathcal{L}_{A,L}).$$
\end{proof}

An immediate consequence is the following criterion, which might turn out useful for computationally verifying when $X_p(A/L)$ is $\La$-torsion.

\begin{corollary} The Selmer group $\Sel_{p^\infty}(A/L)$ has positive $\La$-corank if and only if $\Theta(u)$ vanishes at some $\alpha_i^{-1}$.
\end{corollary}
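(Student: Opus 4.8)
The plan is to deduce the corollary directly from Theorem~\ref{t:imc3constant}, using the factorization of $\mathcal{L}_{A,L}$ and the fact that the relevant Iwasawa algebra is an integral domain.

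First I would rephrase the corank condition. By definition $\Sel_{p^\infty}(A/L)$ has positive $\La$-corank exactly when its Pontryagin dual $X_p(A/L)$ fails to be $\La$-torsion, and by the convention of \S\ref{sss:krul} this happens precisely when $\chi(X_p(A/L))=0$: indeed, if $X_p(A/L)$ is torsion then $\chi(X_p(A/L))$ is generated by a nonzero product of powers of irreducible elements of $\La$, hence is a nonzero ideal, while if $X_p(A/L)$ is non-torsion then $\chi(X_p(A/L))=(0)$ by definition. Combining this with the equality $\chi(X_p(A/L))=(\mathcal{L}_{A,L})$ of Theorem~\ref{t:imc3constant}, and using that $\La$ is a domain (so that $(\mathcal{L}_{A,L})=(0)$ forces $\mathcal{L}_{A,L}=0$), we get: $\Sel_{p^\infty}(A/L)$ has positive $\La$-corank $\iff$ $\mathcal{L}_{A,L}=0$.

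Next I would unwind $\mathcal{L}_{A,L}$. Fix a finite extension $\cO/\ZZ_p$ containing $\alpha_1,\dots,\alpha_g$, so that $\La\hookrightarrow\La_{\cO}:=\La_{\cO}(\Gamma)\simeq\cO[[T_1,\dots,T_d]]$, which is an integral domain (a UFD, as recalled in \S\ref{su:ba}), and so that each $\Theta_{L,S}(\alpha_i^{-1})$ makes sense in $\La_{\cO}$ by Proposition~\ref{p:Thetaconv}. By the definitions \eqref{e:thetasi}, \eqref{e:theta+}, \eqref{e:plfunction}, together with the fact that $\cdot^\sharp$ is an involutive ring automorphism,
\[ \mathcal{L}_{A,L}=\prod_{i=1}^g \Theta_{L,S}(\alpha_i^{-1})^\sharp\,\prod_{i=1}^g \Theta_{L,S}(\alpha_i^{-1}) \in \La \subset \La_{\cO}. \]
Since $\La_{\cO}$ has no zero divisors and $\cdot^\sharp$ is an automorphism of $\La_{\cO}$ (so $\Theta_{L,S}(\alpha_i^{-1})^\sharp=0\iff\Theta_{L,S}(\alpha_i^{-1})=0$), this product vanishes in $\La_{\cO}$ if and only if $\Theta_{L,S}(\alpha_i^{-1})=0$ for some $i$; and because $\mathcal{L}_{A,L}\in\La$ and $\La\to\La_{\cO}$ is injective, its vanishing may legitimately be tested in $\La_{\cO}$. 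Writing $\Theta(u):=\Theta_{L,S}(u)$, this is precisely the statement that $\Theta(u)$ vanishes at some $\alpha_i^{-1}$, which proves the corollary.

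I do not expect any genuine obstacle: once Theorem~\ref{t:imc3constant} is in hand the corollary is a formal consequence. The only point that needs a moment's care is the base change to $\cO$ — the individual twisted Stickelberger factors $\Theta_{L,S}(\alpha_i^{-1})$ need not lie in $\La$, only their total product $\mathcal{L}_{A,L}$ does — but since $\La_{\cO}$ is still an integral domain containing $\La$, factoring the vanishing there is valid and suffices.
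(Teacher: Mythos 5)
Your proof is correct and is exactly the intended argument: the paper offers no proof at all, simply declaring the corollary "an immediate consequence" of Theorem~\ref{t:imc3constant}, and your chain (positive corank $\iff$ $X_p(A/L)$ non-torsion $\iff$ $\chi(X_p(A/L))=0$ $\iff$ $\mathcal{L}_{A,L}=0$ $\iff$ some factor $\Theta_{L,S}(\alpha_i^{-1})=0$ in the domain $\La_{\cO}$) is the right way to fill in the details. Your remark about testing the vanishing in $\La_{\cO}$ rather than $\La$, since the individual factors need not lie in $\La$, is a worthwhile point of care that the paper leaves implicit.
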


\end{subsection}

\subsection{Comparison with \cite{LLTT2}} \label{su:2dreams}
We compare our Main Conjecture and the one in \cite{LLTT2} in the case where $L=\Kpinf$.

\begin{mytheorem} Let $A/K$ be a constant ordinary abelian variety. Then {\em{Theorem \ref{t:imc3constant}}} is equivalent to the Iwasawa Main Conjecture proved in \cite{LLTT2}.
\end{mytheorem}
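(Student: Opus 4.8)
The plan is to show that both sides of the claimed equivalence are statements of the form $\chi\big(X_p(A/\Kpinf)\big) = (\text{explicit element})$, and that the two explicit elements generate the same ideal of $\La = \ZZ_p[[\Gal(\Kpinf/K)]]$. Since Theorem \ref{t:imc3constant} with $L=\Kpinf$ (so $S=\emptyset$, $L=L_{ar}$) already gives $\chi(X_p(A/\Kpinf)) = (\mathcal{L}_{A,\Kpinf})$, and the Main Conjecture of \cite{LLTT2} asserts $\chi(X_p(A/\Kpinf))$ is generated by whatever $L$-function or Stickelberger-type element is defined there, the content of the theorem is purely the identification of the two generators up to a unit of $\La$.

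First I would recall the precise form of the $p$-adic $L$-function in \cite{LLTT2}: in the semistable (here: good reduction, constant) case over the arithmetic $\ZZ_p$-extension it is built from the characteristic polynomial of Frobenius acting on an appropriate unit-root ($F$-crystal / Dieudonn\'e) module, i.e.\ essentially $P(u) = \prod_{i=1}^g (1-\alpha_i u)(1-\beta_i u)$ evaluated in a way that incorporates the action of $\Fr_q = $ the topological generator of $\Gal(K\bar\FF/K)$. Then I would unwind our $\mathcal{L}_{A,\Kpinf}$ for $S=\emptyset$: by \eqref{e:plfunction} it is $\theta_{A,\Kpinf,\emptyset}^+\cdot(\theta_{A,\Kpinf,\emptyset}^+)^\sharp$, and by \eqref{e:thetasi}, \eqref{e:theta+} each factor $\theta_{A,\Kpinf,\emptyset,i}^+ = \Theta_{\Kpinf,\emptyset}(\alpha_i^{-1})^\sharp$, where (crucially, since $S=\emptyset$) $\Theta_{\Kpinf,\emptyset}(u) = (1-\Fr_q u)\prod_{v}(1-[v]u^{\deg v})^{-1}$ includes the $\nabla_\emptyset(u) = 1-\Fr_q u$ factor. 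I would then use \eqref{e:evalTheta} together with the fact that over $\Kpinf$ all the local Euler factors $(1-[v]u^{\deg v})^{-1}$ assemble (by Weil's theorem, as in the proof of Proposition \ref{p:Thetaconv} and Lemma \ref{l:thetanot0}) into the image in $\La$ of the zeta-function numerator, so that $\Theta_{\Kpinf,\emptyset}(\alpha_i^{-1})$ becomes an explicit element whose twists under characters $\omega$ of $\Gamma$ recover $L_\emptyset^*(\omega,s_i)$ with $\alpha_i = q^{s_i}$. Matching this, via the interpolation formula of Theorem \ref{t:interpo} specialized to $S=\emptyset$ (where $\Xi_{\emptyset,\omega}=1$, $d_\omega=0$, and $\tau(\omega^{-1})=1$ because every $\omega$ factors through $\Gal(K\bar\FF/K)$), against the interpolation property that characterizes the \cite{LLTT2} $p$-adic $L$-function shows the two elements have the same image under every $\omega\in\Gamma^\vee$; since such characters separate points of $\La\otimes\bar\QQ_p$ (the $\La$-modules in question being torsion and both elements nonzero by Lemma \ref{l:thetanot0}), they generate the same ideal.

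Concretely, the chain of equalities I would write is: $(\mathcal{L}_{A,\Kpinf}) = \chi(X_p(A/\Kpinf))$ by Theorem \ref{t:imc3constant}; and the \cite{LLTT2}-Main Conjecture says $\chi(X_p(A/\Kpinf)) = (\mathcal{L}_{A,\Kpinf}^{LLTT2})$; so the equivalence of the two statements reduces to the ring-theoretic identity $(\mathcal{L}_{A,\Kpinf}) = (\mathcal{L}_{A,\Kpinf}^{LLTT2})$ in $\La$, which I establish by the interpolation comparison above (both are characterized by their values at the characters of $\Gamma$, up to the same archimedean/Gauss-sum fudge factors, which in the everywhere-unramified case are trivial). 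One should also note the role of the Verschiebung/$\beta_i$ factors: the product over $i$ of $\theta^+_{A,\cdot,\cdot,i}$ and its $\sharp$ accounts for all $2g$ roots $\alpha_1,\dots,\alpha_g,\beta_1,\dots,\beta_g$, so our $\mathcal{L}$ already ``sees'' the full $L$-factor $\prod(1-\alpha_{i}u)(1-\beta_i u)$ and not just the unit-root part — this is exactly what must be reconciled with the crystalline description in \cite{LLTT2}, which phrases the same data differently.

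The main obstacle I anticipate is bookkeeping rather than conceptual: one must pin down the \cite{LLTT2} normalization precisely — which Frobenius (arithmetic vs.\ geometric, absolute vs.\ relative), which side of a functional equation, and which $\sharp$-twist — so that the identification of generators is literally an equality of ideals and not merely ``up to an evident unit''; getting the $\sharp$'s and the $\alpha_i\leftrightarrow\beta_i=q/\alpha_i$ swap consistent between the two papers is where sign/convention errors would hide. A secondary point to be careful about is that \cite{LLTT2} works with the semistable case in possibly greater generality, so I would first restrict its statement to the constant ordinary case and check that its hypotheses (finiteness of $\Sha$, etc.) are the ones already verified in the proof of Proposition \ref{p:sfft} via \cite{Mi68}, ensuring both Main Conjectures are being compared on the same footing.
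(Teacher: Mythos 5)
Your overall framework is the right one — reduce to comparing the generator $\mathcal{L}_{A,\Kpinf}$ with the generator $c_{A/\Kpinf}$ of \cite{LLTT2} via their interpolation properties, using that characters of $\Gamma$ separate elements of $Q(\La)$ — and your observations that $\Xi_{\emptyset,\omega}=1$, $d_\omega=0$ and $\tau(\omega^{-1})=1$ are correct. But there is a genuine gap at the decisive step: you assert that in the everywhere-unramified case the ``fudge factors'' relating each element to $L(A,\omega,1)$ are trivial, and this is false. Precisely because $S=\emptyset$, the factor $\nabla_S(u)=1-\Fr_q u$ is present in $\Theta_{\Kpinf,\emptyset}$, so the interpolation formula \eqref{e:interpo} carries the factor $\omega(\nabla_{A,S})$ with $\nabla_{A,S}=\prod_{i=1}^g(1-\alpha_i^{-1}\Fr_q)(1-\alpha_i^{-1}\Fr_q^{-1})$, which is \emph{not} a unit of $\La$ whenever some $\alpha_i\equiv1$, i.e.\ whenever $A(\Kpinf)[p^\infty]$ is infinite. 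Likewise the \cite{LLTT2} element satisfies $f_{A/\Kpinf}=\star_{A/\Kpinf}\cdot c_{A/\Kpinf}$ with $\star_{A/\Kpinf}$ containing the non-unit $\prod_{i=1}^{l}(1-\zeta_i^{-1}\Fr_q)(1-\zeta_i^{-1}\Fr_q^{-1})$, where the $\zeta_i$ are the Frobenius eigenvalues on the Tate module of $A(\Kpinf)[p^\infty]$ (one also needs the computation that the quantity $\delta$ of \cite{LLTT2} vanishes for a constant abelian scheme to pin $\star_{A/\Kpinf}$ down). The interpolation comparison only gives $f_{A/\Kpinf}=\tilde{\cL}_{A,\Kpinf}$ in $Q(\La)$; the equality of \emph{ideals} $(\mathcal{L}_{A,\Kpinf})=(c_{A/\Kpinf})$ then still requires showing that
$$\nabla_{A,S}\cdot\Big(\prod_{i=1}^{l}(1-\zeta_i^{-1}\Fr_q)(1-\zeta_i^{-1}\Fr_q^{-1})\Big)^{-1}\in\La^\times,$$
and this is where the real content lies. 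The paper does it by matching $\alpha_i=\zeta_i$ for $i\le l$ (using $T_pA(\Kpinf)[p^\infty]=(T_pA)^{H}$) and showing that for $i>l$ the character $\Fr_q\mapsto\alpha_i$ is nontrivial on $H$, hence $\alpha_i\not\equiv1\bmod p$ and $(1-\alpha_i^{-1}\Fr_q^{\pm1})\in\La^\times$. Without this step your argument only identifies the two ideals after inverting the correction factors, which is strictly weaker than the claimed equivalence.
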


\begin{proof}

In this case $S=\emptyset$ and we have $\Xi_{S,\omega}=1$ and $d_\omega=0$ for all $\omega$, while
$$\nabla_{A,S}=\prod_{i=1}^g(1-\alpha_i^{-1}\Fr_q^{-1})(1-\alpha_i^{-1}\Fr_q)\in \La.$$
Define
\begin{equation} \label{e:cLtilde}
\tilde{\cL}_{A, \Kpinf}:=\frac{1}{q^{g(\kappa-1)}\nabla_{A,S}}\cdot\big(\prod_{i=1}^g\alpha_i\big)^{2\kappa-2}\cdot\Fr_q^{g(2\kappa-2)}\cdot\mathcal{L}_{A,\Kpinf}\in Q(\La)\,, \end{equation}
where $Q(\La)$ is the fraction field of $\La$. By Theorem \ref{t:interpo} we see that
\begin{equation} \label{e:interpoLtilde} \omega(\tilde{\cL}_{A, \Kpinf})=L(A,\omega,1) \end{equation}
for all characters $\omega$ of $\Gamma$. By \cite[Theorem 1.1, Proposition 2.2.4]{LLTT2}, there is a  generator
$c_{A/\Kpinf}\in \La$ of $\chi(X_p(A/\Kpinf)$ and some $f_{A/\Kpinf}$ and $\star_{A/\Kpinf}$ belonging to $ Q(\La)$ satisfying
$$f_{A/\Kpinf}=\star_{A/\Kpinf}\cdot c_{A/\Kpinf},$$
as well as (since the ramification locus of $\Kpinf/K$ is the empty set)
\begin{equation}\label{e:fak}
\omega(f_{A/\Kpinf})=L(A,\omega,1)
\end{equation}
for all characters $\omega$ of $\Gamma$. If $K$ is the function field of some curve $C/\FF$, then for our $A/\FF$, the abelian scheme
$\mathcal{A}:=A\times_{\FF} C$ is the N$\acute{\text{e}}$ron model of $A\times_{\FF}\Spec K$, and hence the
$\delta$ defined in \cite[(26)]{LLTT2} actually equals $0$, because we can choose $e_1,...,e_g$ to be a basis of $Lie( A)(\FF)$ so that $f_v$ can be taken to be $e$. Thus we have
\begin{equation}\label{e:starak}
\star_{A,K_\infty^{(p)}}= q^{-g(\kappa-1)}\cdot \big(\prod_{i=1}^l(1-\zeta_i^{-1}\Fr_q)(1-\zeta_i^{-1}\Fr_q^{-1})\big)^{-1},
\end{equation}
where $\zeta_1,...\zeta_l$ are eigenvalues of the Galois action of $\Fr_q$ on the Tate module $T_pA(\Kpinf)[p^\infty]$
of $A(\Kpinf)[p^\infty]$.
Now \eqref{e:interpoLtilde} and \eqref{e:fak} together imply that $f_{A/\Kpinf}=\tilde{\cL}_{A, \Kpinf}$.
Since $\Fr_q$ and each $\alpha_i$ belong to $\La^\times$,  in view of
\eqref{e:cLtilde} and \eqref{e:starak}, we only need to show
$$\nabla_{A,S}\cdot \big(\prod_{i=1}^l(1-\zeta_i^{-1}\Fr_q)(1-\zeta_i^{-1}\Fr_q^{-1})\big)^{-1}\in \La^\times.$$
But since $\alpha_1,...,\alpha_g$ are the eigenvalues of $\Fr_q$ acting on the Tate module $T_pA[p^\infty]$ of
$A[p^\infty]$,
we can actually rearrange $\alpha_1,...,\alpha_g$ to have $\alpha_i=\zeta_i$, for $i\leq l$.
Let $\tilde\Gamma$ and $H$ be as in \eqref{e:prod2}.
The maps $\Fr_q\mapsto\alpha_i$ determine homomorphisms of $\tilde\Gamma$ onto some group of $p$-adic units and  the equality $T_pA(\Kpinf)[p^\infty]=(T_pA)^H$ yields that the image of $H$ is non-trivial for $i>l$. Hence $i>l$ implies $\alpha_i\not\equiv1$ mod $p$ and thus $(1-\alpha_i\Fr_q^{\pm1})\in\La^\times$.
\end{proof}

\end{subsection}
\end{section}


\begin{thebibliography}{D-R}

\bibitem[Bou65]{bou65} N.~Bourbaki, {\em{\'El\'ements de math\'ematique.  Alg\`ebre commutative. Chapitre 7: Diviseurs.}} Hermann, Paris 1965.
\bibitem[BLT09]{blt09} D.~Burns, K.~F.~Lai and K.-S.~Tan, {\em{On geometric main conjecture}}, appendix to D.~Burns, {\em{Congruences between derivatives of geometric L-functions,}}  Invent. Math. {\bf 184} (2011) 221--256.
\bibitem[Crw87]{crw87} R.~Crew, {\em{L-functions of p-adic characters and geometric Iwasawa theory}}, Invent. Math. {\bf 88} (1987), 395-403.

\bibitem[GM]{gm13} G.~van der Geer and B.~Moonen {\em Abelian varieties}, book in preparation. Available on \newline http://staff.science.uva.nl/~bmoonen/boek/BookAV.html

\bibitem[LLTT1]{LLTT1}  K.F.~Lai, I.~Longhi, K-S.~Tan, F. Trihan, Pontryagin duality for Iwasawa modules and abelian varieties, preprint 2014, electronic version available at   http://arxiv.org/abs/1406.5815.
\bibitem[LLTT2]{LLTT2}  K.F.~Lai, I.~Longhi, K-S.~Tan, F. Trihan, The Iwasawa Main Conjecture for semistable  abelian varieties over function fields, preprint 2013.
\bibitem[Maz72]{mazur72} B.~Mazur, {\em {Rational points of abelian varieties with values in towers of number fields,}} Invent. Math. {\bf {18}} (1972), 183-266.
\bibitem[Mil68]{Mi68} J.S.~Milne, {\em The Tate-Shafarevich group of a constant abelian variety}, Invent.~Math. {\bf 6} (1968), 91--105.
\bibitem[Mil80]{mil80} J.S.~Milne, {\em \'Etale Cohomology,} Princeton University press, Princeton, 1980.
\bibitem[Mil06]{mil86} J.S.~Milne, {\em{Arithmetic duality theorems,}} Second Ed. (electronic version),
2006.
\bibitem[Tan10]{tan10a} K.-S.~Tan, {\em{A generalized Mazur's theorem and its applications,}} Trans. Amer. Math. Soc.  {\bf 362} (2010),  no. 8, 4433--4450.
\bibitem[Tan14]{tan10b} K.-S.~Tan, {\em Selmer groups over $\ZZ_p^d$-extensions}, Math. Ann., 2014, Digital Object Identifier (DOI) 10.1007/s00208-014-1023-9.  Electronic version avaiable at
arXiv:1205.3907v2.
\bibitem[Ta66]{Ta66} J.~Tate, {\em Endomorphisms of abelian varieties over finite fields,} Invent.~Math.  {\bf 2} (1966), 134-144.
\bibitem[Ta84]{t84} J.~Tate, {\em{Les Conjectures de Stark sur les Fonctions $L$ d'Artin en $s=0$,}} Birkh\"auser, Boston, 1984.
\bibitem[Wat69]{wat69} W.~Waterhouse, {\em{Abelian varieties over finite fields,}} Ann. Sci. Ecole Norm. (4){\bf 2} (1969), 521-560.
\bibitem[Wat79]{wat79} W.~Waterhouse, {\em{Introduction to Affine Group Schemes,}} Springer-Verlag, 1979.
\bibitem[We74]{We73} A.~Weil,{\em{ Basic number theory}}, Springer-Verlag, 1974.

\end{thebibliography}
\end{document}